	\newtheorem{thm}{Theorem}[section]
	\newtheorem{lem}[thm]{Lemma}
	\newtheorem{prop}[thm]{Proposition}
	\theoremstyle{definition}
	\newtheorem{defn}[thm]{Definition}
	\theoremstyle{remark}
	\newtheorem{rem}[thm]{Remark}
	\numberwithin{equation}{section}
	\newtheorem{ex}[thm]{Example}
	\numberwithin{equation}{section}
	\newcommand{\mbb}{\mathbb}
	\newcommand{\ov}{\overline}
	\newcommand{\ep}{\epsilon}
	\newcommand{\no}{\noindent}
	\newcommand{\cal}{\mathcal}
	\newcommand{\la}{\lambda}
	\newcommand{\cord}{(z_1,z_2, \hdots ,z_k)}
\begin{document}
	\title{Polynomial shift--like maps in $\mbb C^k$}
	\keywords{non--wandering domains, shift--like maps}
	\subjclass{Primary: 32H02  ; Secondary : 32H50}
	\author{Sayani Bera} 
	\address{Sayani Bera: School of Mathematics, Ramakrishna Mission Vivekananda Educational and Research Institute, 711202, West Bengal, India}
	\email{sayanibera2016@gmail.com}
	\pagestyle{plain}
	\begin{abstract}
	The purpose of this article is to explore a few properties of polynomial shift-like automorphisms of $\mbb C^k.$ We first prove that a $\nu-$shift-like polynomial map (say $S_a$) degenerates essentially to a polynomial map in $\nu-$dimensions as $a \to 0.$ Secondly, we show that a shift-like map obtained by perturbing a hyperbolic polynomial (i.e., $S_a$, where $|a|$ is sufficiently small) has finitely many Fatou components, consisting of basins of attraction of periodic points and the component at infinity.
	\end{abstract}
	\maketitle
 \section{Introduction}
%
\no Bedford and Pambuccian in \cite{BP}, introduced a class of polynomial automorphisms called the shift--like maps in $\mbb C^k$, $k \ge 3$ which can be viewed as a generalization of H\'{e}non maps to the higher dimensions. They are defined as follows:

\medskip\no 
A polynomial \textit{shift--like map} of type $1 \le \nu \le k-1$ in $\mbb C^k$, $k \ge 3$ is an automorphism of $\mbb C^k$ of the form:
\[ S_a(z_1,z_2, \hdots,z_k)=(z_2,\hdots,z_k,p(z_{k-\nu+1})+a z_1)\]
where $p$ is polynomial (or entire) in one variable and $a \in \mbb C^*$. Henceforth, we will use the phrase \textit{$\nu-$shift of the polynomial $p$} in $\mbb C^k$ to refer such maps.

\medskip\no 
Furthermore, it is noted in \cite{BP} that polynomial shift--like maps share many properties similar to H\'{e}non maps in $\mbb C^2$, such as the existence of a filtration, the construction of Green functions and the corresponding stable and unstable currents. However, unlike H\'{e}non maps, shift--like maps are not \textit{regular}, i.e., the indeterminacy sets of $S_a$ (say ${I}^+$) and $S_a^{-1}$ (say ${I}^-$) are not disjoint in $\mbb P^k.$ Our goal in this article is to extend further, the theory of dynamics of shift--like polynomial automorphisms, in analogy with H\'{e}non maps.

\medskip\no Let us first recall a few standard notations. Let $K^\pm$, $J^\pm$ and $J$ denote the following sets corresponding to an automorphism $F$ of $\mbb C^k$, $k\ge 2$
\[K^\pm=\{ z \in \mbb C^k: F ^{\pm n}(z) \text{ is bounded }\}, J^\pm=\partial K^\pm \text{ and } J=J^+ \cap J^-.\]
For a shift--like map $S_a$, the aforementioned sets will be denoted by $K_a^\pm$, $J_a^\pm$ and $J_a.$ Let $G_a^\pm$
denote the Green function associated to the set $K_a^\pm$ respectively. Further, let $\mu_a^\pm$ denote the stable and unstable current associated $S_a$ which are defined as (see \cite{BP}):
\[ \mu_a^+=\Big(dd^c \frac{G_a^+}{2 \pi}\Big)^\nu \text{ and } \mu_a^-=\Big(dd^c \frac{G_a^-}{2 \pi}\Big)^{k-\nu}\]
where $\nu-$is the the type of the shift--map. Then the wedge product $\mu_a=\mu_a^+ \wedge \mu_a^-$ gives a Borel measure in $\mbb C^k$ such that
\[ {S_a}_*^{\nu(k-\nu)}(\mu_a)=\mu_a.\]
We will first investigate the behavior of the measure $\mu_a$ as $S_a$ degenerates to a $\nu-$dimensional map with $a \to 0.$ The degeneration in the case of H\'{e}non maps was studied by Bedford--Smillie in \cite{BS3}. 

\medskip\no Next, we are interested in studying whether the action of a shift--like map is hyperbolic on $J_a$ or not. Let us first recall the definition of \textit{hyperbolicity}.

\medskip\no A smooth diffeomorphism $F$ of a manifold $M$ equipped with a Riemannian  norm $|\cdot|$ is said to be {\it hyperbolic} (see \cite{KatokBook}) on a compact completely invariant subset $S \subset M$ if 
there exist constants $\la >1$, $C>0$ and a continuous splitting of the of the tangent space $T_x X$ for every $x \in S$ into $E_x^s \oplus E_x^u=T_x X$ such that:
\begin{itemize}
\item[(i)] $DF(x)(E_x^s)=E^s_{F(x)}$ and $DF(x)(E_x^u)=E^u_{F(x)}$.

\medskip
\item[(ii)] $|DF^n(x)v| \le C\la ^{-n}|v|$ for $v \in E_x^s$ and $|DF^n(x)v| \ge C^{-1}\la^n |v|$ for $v \in E_x^u.$
\end{itemize}
\medskip\no It is a well known fact from \cite{Hu-OV} and \cite{BS1}, that if a H\'{e}non map (say $H_a$) is obtained by a small enough perturbation of a \textit{hyperbolic polynomial} in one variable then $H_a$ is hyperbolic on $J_a.$ Furthermore, the Fatou set of a hyperbolic H\'{e}non map in $\mbb C^2$ consists of finitely many sinks and the component at infinity(see \cite{BS1}).

\medskip\no In this article, we will prove that if a \textit{1-shift--like map} $S_a$ in $\mbb C^k$ is obtained by a sufficiently small perturbation of a hyperbolic polynomial then $S_a$ is hyperbolic on $J_a$ with respect to a the Riemannian metric (equivalent to the Euclidean metric). As a consequence, we obtain that the interior of $K_a^+$ is union of finitely many sinks. However, for any arbitrary $\nu-$shift ($1 \le \nu \le k-1$), we are not able to conclude whether $S_a$ is hyperbolic on $J_a$ or not, but we prove that a sufficiently small $\nu-$shift of a hyperbolic polynomial has finitely many Fatou components and the interior of $K_a^+$ consists of finitely many sinks. This paper is organized as follows: 

\medskip\no 
In Section \textbf{2}, we recall some basic properties of shift--like maps from \cite{BP}. Further, we prove two facts. First, the $\nu(k-\nu)-$th iterate of a $\nu-$shift of a polynomial of degree $d \ge 2$ is regular and hence we can work with appropriate compositions of shift maps. Second, if $ |a|<1$ then for any $\nu-$shift $S_a$ of a polynomial, the set $K_a^-$ has empty interior, i.e., $K_a^-=J_a^-.$ The proof of this is similar to that for H\'{e}non maps.
   
\medskip\no 
In Section \textbf{3}, we investigate the behavior of $S_a$ as it degenerates with $a \to 0.$ By \textit{degeneration} we mean $$S_a(z_1,\hdots,z_k) \to S_0(z_1,\hdots,z_k)=(z_2,\hdots,z_k,p(z_{k-\nu+1}))$$
as $a \to 0.$ We show that for a $\nu-$shift there exists an appropriate graph $\Gamma_{\nu} $ in $\mbb C^k$ over $\mbb C^\nu$, i.e., a biholomorphism $\phi: \mbb C^\nu \to  \Gamma_{\nu}$ such that $S_0(\mbb C^k)=\Gamma_\nu.$ For $\nu \ge 1$, let $p_{\nu}: \mbb C^{\nu} \to \mbb C^{\nu} $ denote the map
\[ p_{\nu}(z_1,\dots,z_{\nu})=(p(z_1),\hdots,p(z_{\nu}))\]
where $p$ is a monic polynomial map of degree $d \ge 2$ in $\mbb C.$
The Green function and the Borel measure $\mu_{p_\nu}$ corresponding to the map $p_{\nu}$ are defined as:
\[ G_{p_{\nu}}(z)=\lim_{n \to \infty}\frac{\log^+\|p_\nu^n(z)\|}{d^n} \text{ and }\mu_{p_\nu}=\Big(dd^c \frac{G_{p_\nu}}{2\pi}\Big)^{\nu}.\] 
\begin{thm}\label{degeneration}
Let $p$ be a monic polynomial and $\mu_p$ be the equilibrium measure on $\mbb C$, supported in $K_p$ corresponding to the polynomial $p.$ Then for a $\nu-$shift of the polynomial $p$
\[ \lim_{a \to 0} \mu_a=\phi_*(\mu_{p_\nu}) \text{ and }\limsup_{a \to 0} \Lambda_a=(k-\nu)\Lambda_{p_\nu}.\]
where $\Lambda_{p_\nu}$ and $\Lambda_a$ are the Lyapunov exponent of the map $p_{\nu}$ and $S_a^{\nu(k-\nu)}$ with respect to the measure $\mu_{p_\nu}$ in $\mbb C^\nu$ and $\mu_a$ in $\mbb C^k.$
\end{thm}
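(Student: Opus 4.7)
The plan is to mimic Bedford--Smillie's degeneration analysis for H\'enon maps in \cite{BS3}. I would first unpack the combinatorics of $S_0$: a short induction on $n$ shows that the forward images $S_0^n(\mbb C^k)$ form a decreasing chain of algebraic subvarieties stabilizing at a $\nu$-dimensional graph $\Gamma_\nu$ once $n\ge k-\nu$. Reading off the parametrization $\phi:\mbb C^\nu\to\Gamma_\nu$ from the coordinates of $S_0^{k-\nu}$, a direct calculation yields the key conjugacy $\phi^{-1}\circ S_0^{\nu(k-\nu)}\circ\phi = p_\nu^{k-\nu}$ on $\mbb C^\nu$, which explains both the appearance of $p_\nu$ in the limit measure and the factor $k-\nu$ in the Lyapunov statement.

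For the limit of the measures I would treat $\mu_a^+$ and $\mu_a^-$ separately. The explicit formula
\[S_a^{-1}(w_1,\dots,w_k)=\Big(\frac{w_k-p(w_{k-\nu})}{a},\,w_1,\dots,w_{k-1}\Big)\]
shows that a point escapes under $S_a^{-1}$ at rate $|a|^{-1}$ unless it lies within $O(|a|)$ of the hypersurface $\{w_k=p(w_{k-\nu})\}$; iterating this trap forces $K_a^-$ into an $O(|a|)$-neighborhood of $\Gamma_\nu$. Comparing $G_a^-$ with a plurisubharmonic function carrying logarithmic poles along $\Gamma_\nu$ then yields, after renormalizing by $\log(1/|a|)$, weak convergence of $\mu_a^-=(dd^c G_a^-/2\pi)^{k-\nu}$ to the current of integration $[\Gamma_\nu]$. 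On the forward side, $S_a\to S_0$ uniformly on compacta, so $G_a^+\to G_0^+$ locally uniformly with $G_0^+\circ\phi$ proportional to $G_{p_\nu}$; Bedford--Taylor continuity of Monge--Amp\`ere operators then gives $\phi^*\mu_a^+\to\mu_{p_\nu}$ as $(\nu,\nu)$-currents on $\mbb C^\nu$. Combining the two convergences via the projection formula produces $\mu_a\to\phi_*(\mu_{p_\nu})$.

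For the Lyapunov exponent I would write $\Lambda_a$ as $\int\log\|DS_a^{\nu(k-\nu)}|_{E_a^u}\|\,d\mu_a$, via Oseledets, with $E_a^u$ the $\nu$-dimensional unstable subbundle. On $\Gamma_\nu$ the differential $DS_0^{\nu(k-\nu)}$ splits into a block that conjugates through $\phi$ to $Dp_\nu^{k-\nu}$ along the tangent directions and a nilpotent block in the normal directions; combining upper semicontinuity of the log-integrand with the measure convergence already established then gives
\[\limsup_{a\to 0}\Lambda_a \;\le\; \int \log\|Dp_\nu^{k-\nu}\|\,d\mu_{p_\nu}\;=\;(k-\nu)\Lambda_{p_\nu},\]
while the matching lower bound comes from constructing a $\nu$-dimensional expanding invariant distribution close to $T\Gamma_\nu$ for small $a$ and applying Oseledets again.

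The delicate step is the convergence $\mu_a^-\to[\Gamma_\nu]$: since $G_a^-$ does not converge to a single plurisubharmonic function but develops logarithmic poles along $\Gamma_\nu$, standard Bedford--Taylor continuity cannot be invoked directly on the Monge--Amp\`ere power $(dd^c G_a^-)^{k-\nu}$. Passing to the limit requires the renormalization $G_a^-/\log(1/|a|)$, and additional care is needed when wedging against $\mu_a^+$, because the supports become tangent to $\Gamma_\nu$ in the limit.
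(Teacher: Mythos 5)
Your overall blueprint matches the paper's: stabilize $S_0^n(\mbb C^k)$ at the graph $\Gamma_\nu$, conjugate $S_0$ restricted to $\Gamma_\nu$ to $p_\nu$ (so $S_0^{\nu(k-\nu)}$ corresponds to $p_\nu^{k-\nu}$), push $K_a^-$ into a thin neighbourhood of $\Gamma_\nu$ via the filtration, and then analyse $\mu_a^+$ and $\mu_a^-$ separately before wedging. The treatment of $\mu_a^+$ and the claim that $\mu_a^-\to[\Gamma_\nu]$ (up to the constant $d^{\nu-k}$) are both what the paper proves.

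There is, however, a concrete problem with the renormalization you propose for $G_a^-$. You suggest dividing by $\log(1/|a|)$, but this is the wrong operation. From the filtration estimate one has $G_a^-(z)+\tfrac{\log|a|}{d}\to F(z)=\tfrac1d\log\max_i|z_{\nu+i}-p(z_i)|$ locally on $\mbb C^k\setminus\Gamma_\nu$, so $G_a^-$ itself blows up like $-\tfrac{\log|a|}{d}$. Consequently $G_a^-/\log(1/|a|)\to \tfrac1d$, a constant, whose $dd^c$ vanishes; and even ignoring that, the multiplicative rescaling would attach an extra factor $\bigl(\log(1/|a|)\bigr)^{-(k-\nu)}$ to the Monge--Amp\`ere power, which is incompatible with the fact that $\mu_a^-$ has fixed total mass and is asserted to converge to $[\Gamma_\nu]$ unrescaled. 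The device the paper actually uses is purely additive: set $H_a=G_a^-+\tfrac{\log|a|}{d}$. Since the added term is a constant, $dd^c H_a=dd^c G_a^-$ and $\mu_a^-=(\tfrac1{2\pi}dd^c H_a)^{k-\nu}$ unchanged, while $H_a\to F$ in $L^1_{\mathrm{loc}}$; then $(dd^c F)^{k-\nu}=(2\pi)^{k-\nu}d^{\nu-k}[\Gamma_\nu]$ by the Demailly computation for $\log\max|z_i-a_i|$ pulled back along $\mathcal F(z)=(z_{\nu+1}-p(z_1),\dots,z_k-p(z_{k-\nu}))$. So the delicate step you correctly flag is handled by an additive, not multiplicative, normalization.

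For the Lyapunov exponent the two arguments are more genuinely different. You invoke Oseledets, pass to an unstable subbundle, and propose a two-sided bound, with the lower bound coming from an expanding invariant distribution near $T\Gamma_\nu$ for small $a$. The paper instead proves that $a\mapsto\Lambda_a$ is upper semicontinuous via the subadditivity of $n\mapsto\int\|DS_a^{\nu(k-\nu)n}\|\,d\mu_a$ (so $\Lambda_a$ is a decreasing limit of continuous functions of $a$), computes $\Lambda_0=(k-\nu)\Lambda_{p_\nu}$ from the conjugacy $S_0^\nu|_{\Gamma_\nu}\sim p_\nu$, and concludes by upper semicontinuity. Your route is heavier machinery, but it does supply the lower bound $\liminf_{a\to0}\Lambda_a\ge(k-\nu)\Lambda_{p_\nu}$ that pure upper semicontinuity does not give; if you carry it out, it would in fact strengthen the conclusion from an inequality to the equality stated in the theorem. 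Just be aware that this part is a genuine departure from, and potential improvement on, the paper's argument rather than a reproduction of it.
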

\no  The idea of the proof is that as $a \to 0$, a $\nu-$shift degenerates essentially to the map $p_\nu$ on the graph $\Gamma_\nu$ and the support of the $(k-\nu,k-\nu)-$current $\mu_a^-$ converges to $\Gamma_\nu.$ This result is an extension of Proposition 6.3 from \cite{BS3} for shift-like polynomials.

\medskip\no 
In Section \textbf{4}, we recall a few properties of \textit{hyperbolic} polynomials in $\mbb C$ and a prove a fact regarding the perturbed dynamics of the polynomial in a closed subset contained in $F_\infty(p)$ (i.e., the Fatou component of $p$ containing infinity). 

\medskip\no In Section \textbf{5}, we prove the following two results in $\mbb C^3.$ The generalized results for $\mbb C^k$ are stated as:
\begin{thm}\label{main theorem 1}
Let $S_a$ be a polynomial shift--like map of type $1$ in $\mbb C^k$ of the form, i.e.,
\[S_a\cord=(z_2,z_3,\hdots,az_1+p(z_k))\] where $p$ is a hyperbolic polynomial in $\mbb C$ with connected Julia set.
Then there exists $A>0$ such that for every $0<|a|<A$, $S_a^{k-1}$ is hyperbolic on $J_a$ with respect to a Riemannian metric (equivalent to the Euclidean metric) on a neighbourhood of $J_a.$
\end{thm}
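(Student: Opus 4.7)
The plan is to adapt the Hubbard--Oberste-Vorth / Bedford--Smillie cone-field argument for H\'enon maps to the higher-dimensional $1$-shift. The structural starting point is that $S_0^{k-1}$ collapses $\mbb C^k$ onto the one-dimensional graph $\Gamma_1=\{(z,p(z),\ldots,p^{k-1}(z)):z\in\mbb C\}$ and, under the parametrization $\phi$ of Section~3, realizes the iterate $p^{k-1}$. Hyperbolicity of $p$ on $J_p$ thus yields, at $a=0$, one strongly expanding direction tangent to $\Gamma_1$ and $k-1$ infinitely contracting complementary directions. For $|a|$ small $S_a^{k-1}$ is a small perturbation of $S_0^{k-1}$, and the goal is to promote this picture to a genuine hyperbolic splitting on $J_a$.

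\medskip\no First, I would localize $J_a$. Combining the convergence $\mu_a\to\phi_*\mu_p$ from Section~3 with the perturbative description of $p$ on $F_\infty(p)$ from Section~4 and the filtration arguments of Section~2, one can produce $A>0$ and a neighborhood $N$ of $\phi(J_p)$ so that $J_a\subset N$ for all $0<|a|<A$. Points outside $N$ either escape to infinity under forward iteration of $S_a$ or enter the basin of a continuation of a finite attracting cycle of $p$.

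\medskip\no Next I would analyze $DS_a$, which in standard coordinates is the companion matrix of $\lambda^k-p'(z_k)\lambda^{k-1}-a$; at $a=0$ the nonzero eigenvalue is $p'(z_k)$ with eigenvector tangent to $\Gamma_1$, and $0$ appears with a $(k-1)$-dimensional kernel. Perturbing in $a$ and using $|(p^{k-1})'(z)|\ge\la^{k-1}>1$ on $J_p$, one sees that $DS_a^{k-1}$ has one eigenvalue of modulus $\gtrsim\la^{k-1}$ along a perturbed tangent direction $v_a(x)$, and $k-1$ eigenvalues of size $O(|a|)$ transverse to it. Guided by this, I would construct an unstable cone $C^u_x$ of small opening around $v_a(x)$ and a stable cone $C^s_x$ around the complementary $(k-1)$-plane on a neighborhood of $J_a$, and verify
\[ DS_a^{k-1}(C^u_x)\subset\mathrm{int}\,C^u_{S_a^{k-1}(x)},\qquad DS_a^{-(k-1)}(C^s_x)\subset\mathrm{int}\,C^s_{S_a^{-(k-1)}(x)}, \]
with uniform expansion in each case. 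The nested intersections of iterated cones then produce a continuous $DS_a^{k-1}$-invariant splitting $T_x\mbb C^k=E^u_x\oplus E^s_x$ on $J_a$, and the Riemannian metric equivalent to the Euclidean norm asserted in the theorem is obtained from the standard Mather-type adapted metric summing $\mu^{-2n}$-weighted forward/backward iterates on $E^u_x$ and $E^s_x$ respectively, for $1<\mu$ strictly below the minimal expansion rate.

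\medskip\no The principal obstacle I expect is the backward cone invariance: $S_a^{-1}$ has a factor $1/a$ in its first coordinate, so its derivative in naive coordinates is very unbalanced as $a\to 0$. Obtaining cone estimates that are uniform over small $a$ requires either an $a$-dependent rescaling of the coordinates (by appropriate powers of $|a|^{1/(k-1)}$) or introducing the adapted metric at the outset and working intrinsically. A secondary subtlety is the continuous extension of the direction field $v_a(x)$ off of $\Gamma_1$, which relies on the localization $J_a\subset N$ and on the uniform lower bound $|p'|\ge c>0$ on $N$ that is available because hyperbolicity places the critical points of $p$ in the interior of the Fatou set, hence away from $\phi(J_p)$.
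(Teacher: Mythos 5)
Your proposal follows essentially the same cone-field strategy as the paper: use the graph $\Gamma_1$ to see that $S_0^{k-1}$ is conjugate to $p^{k-1}$ on the one-dimensional unstable direction tangent to $\Gamma_1$, pull back the conformal expanding metric for $p$ to that direction and use the Euclidean norm on the complementary $(k-1)$-dimensional kernel, then show that the cones around these directions are invariant for $S_a^{k-1}$ when $|a|$ is small and appeal to the standard ``cones imply hyperbolicity'' criterion (the paper cites Katok--Hasselblatt, Corollary 6.4.8). The paper's proof of the localization $J_a\subset \ov{D_{R'}}^{\,k-1}\times V$ (Lemma~\ref{size of Julia}) does not use the measure convergence $\mu_a\to\phi_*\mu_p$ you invoke; it uses only the filtration and the perturbation result Proposition~\ref{U-infinity} (your Section~4 reference), so drop the measure-theoretic ingredient — it is neither needed nor sufficient for the pointwise containment you want.

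One misconception worth flagging: the ``principal obstacle'' you identify, the $1/a$ factor in $S_a^{-1}$, is not an obstacle at all once cones and the adapted metric are in place. Backward cone invariance $DS_a^{-(k-1)}(C^s_x)\subset\mathrm{int}\,C^s_{S_a^{-(k-1)}(x)}$ only requires that $DS_a^{-(k-1)}$ shrink the unstable component relative to the stable one; since $DS_a^{-(k-1)}$ expands the stable directions by roughly $|a|^{-1}$ and contracts the unstable direction by roughly $\lambda^{-(k-1)}$, the cone estimate becomes \emph{easier}, not harder, as $a\to 0$. Your proposed fix (adapted metric, working intrinsically) is exactly what the paper does, so this does not derail the argument; just be aware that no $a$-dependent coordinate rescaling is needed. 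Finally, the expansion bound $|(p^{k-1})'|\ge\lambda^{k-1}>1$ on $J_p$ should be stated in the conformal hyperbolic metric $\rho$ on the neighbourhood $V$ of $J_p$, not the Euclidean norm; the paper makes this explicit by defining $\|v\|_\eta=\|\phi_3(v)\|_\rho$ in inequality~(\ref{expandingequation1}), and your Mather-type construction achieves the same thing but is a more roundabout way to the same adapted norm.
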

\begin{thm}\label{main theorem 2}
Let $S_a$ be a polynomial shift--like map of type $1 \le \nu \le k-1$ in $\mbb C^k$ of the form
\[S_a\cord=(z_2,z_3,\hdots,az_1+p(z_{k-\nu+1}))\] where $p$ is a hyperbolic polynomial in $\mbb C$ with connected Julia set. Then there exists $A>0$ such that for every $0<|a|<A$, $S_a$ satisfies the following properties:
\begin{itemize}
\item[(a)]There is no wandering domain of $S_a$.
\item[(b)]Each component in the interior of $K_a^+$ is the basin of attraction of an attracting
periodic point.
\item[(c)]There are at most finitely many basins of attraction.
\end{itemize} 
\end{thm}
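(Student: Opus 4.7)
The plan is to combine the degeneration statement of Theorem \ref{degeneration} with the hyperbolicity of the one-variable polynomial $p$ to classify the Fatou components of $S_a$ contained in $\operatorname{int}(K_a^+)$, for $|a|$ sufficiently small. Since $p$ is hyperbolic with connected Julia set, we may write $\mbb C = F_\infty(p) \sqcup B_1 \sqcup \cdots \sqcup B_m$, where each $B_i$ is the basin of an attracting periodic point $w_i$ of $p$ of period $r_i$; let $R$ be a common multiple of the $r_i$'s.

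The first step is to locate the attracting periodic orbits of $S_a$. For each $\nu$-tuple $(w_{i_1},\dots,w_{i_\nu})$, the point $\phi(w_{i_1},\dots,w_{i_\nu}) \in \Gamma_\nu$ is an attracting periodic point of $S_0|_{\Gamma_\nu}$ (whose relevant iterate is conjugate to $p_\nu^R$), and the eigenvalues of $DS_0^N$ at this point, for a suitable multiple $N$, have modulus $<1$ in the $\Gamma_\nu$ directions and vanish in the normal directions. Since $S_a^{\nu(k-\nu)}$ is regular by Section \textbf{2} and $S_a$ depends holomorphically on $a$, the implicit function theorem supplies, for every sufficiently small $|a|$, a unique nearby periodic orbit of $S_a$ of the same combinatorial type, which remains attracting by continuity of eigenvalues. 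There are at most $m^\nu$ such tuples, which already yields property (c).

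The second and crucial step is that every $z \in \operatorname{int}(K_a^+)$ is attracted to one of these orbits. Tracking the last coordinate of $S_a^n(z)$ along iteration yields the recursion $Y_n = p(Y_{n-\nu}) + a Y_{n-k}$, with initial data $Y_{-j} = z_{k-j}$ for $j = 0,\dots,k-1$. For $a = 0$ this decouples into $\nu$ independent $p$-orbits starting at $z_{k-\nu+1},\dots,z_k$, and for $|a|$ small each of these is an $O(a)$-perturbation of a $p$-orbit, uniformly on the compact set in which $\{S_a^n(z)\}$ lies by boundedness. The perturbation lemma of Section \textbf{4} then forbids the $Y_n$'s from persistently lying in any closed subset of $F_\infty(p)$, as otherwise the orbit would escape to infinity, contradicting $z \in K_a^+$. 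Since $z$ lies in $\operatorname{int}(K_a^+)$, the same applies on a whole neighbourhood of $z$, forcing each subsequence $\{Y_{n\nu + j}\}_n$ eventually to enter some basin $B_{i_j}$ and then to converge to $w_{i_j}$, by hyperbolic contraction of $p$ on $B_{i_j}$ and its stability under small perturbation. Reading off the other coordinates of $S_a^n(z)$ as shifts of earlier $Y_n$'s, the full orbit of $z$ then converges to the lifted attracting periodic orbit produced in step one; this gives (b), and since every Fatou component in $\operatorname{int}(K_a^+)$ is thereby contained in an invariant basin of attraction, none can be wandering, yielding (a).

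The main obstacle is the second step. Unlike the $\nu = 1$ case of Theorem \ref{main theorem 1}, we lack global hyperbolicity of $S_a$ on $J_a$ here, so convergence has to be extracted purely from the coordinate-by-coordinate reduction above. Controlling the cumulative effect of the $O(a)$ perturbations over arbitrarily long orbits — so that they neither destroy the contraction inside each basin nor allow orbits to linger near $J_p$ — will most likely require a block-by-block estimate on blocks of $\nu$ iterates, combined with a Montel-type normality argument on $\{S_a^n|_U\}$ to exclude any residual recurrent behavior in a hypothetical wandering component.
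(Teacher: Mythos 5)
Your proposal takes a genuinely different route from the paper, and the difference matters: the paper's proof does not locate attracting periodic orbits by persisting them from $a=0$ via the implicit function theorem. Instead, it decomposes $J_a$ into closed $S_a^{\nu}$-invariant subsets $J_1,\dots,J_m$ (Lemmas \ref{lemma 2}, \ref{lemma 3}, and their $k$-dimensional analogues in Section 6), proves hyperbolicity of $S_a^{\nu(k-\nu)}$ on each $J_i$ by building adapted Riemannian metrics and invariant cone fields (Proposition \ref{hyperbolic_prop}), establishes $W^s(J_a)=W^s_{S_a^{\nu}}(\cup J_i)\subset J_a^+$ and that $J_a$ is the maximal invariant set in a neighbourhood $\tilde U$, and then imports the normal-families/Kobayashi-metric argument of Theorem 5.6 of \cite{BS1} (via \cite{Be}) to force the limit set of every interior Fatou component to be a single attracting periodic point. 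In your scheme, the sets $J_i$, the cone fields, and the maximality statement are precisely what replace the vague second step, so the two routes are not minor variants of each other.

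There is a genuine gap in your proposal, and you flag it yourself: the second step is not proved. The recursion $Y_n=p(Y_{n-\nu})+aY_{n-k}$ and Proposition \ref{U-infinity} do let you exclude the $Y_n$ from lingering in $F_\infty(p)$, but they do not give convergence of the subsequences $\{Y_{n\nu+j}\}$ to attracting cycles of $p$: orbits of $z\in\operatorname{int}(K_a^+)$ could a priori spend arbitrarily long stretches near $J_p$, and the cumulative $O(a)$-errors over such stretches are not controlled by any single application of one-variable hyperbolicity. The paper handles exactly this by the hyperbolicity of $S_a^{\nu}$ on the pieces $J_i$: if a Fatou component had a non-trivial limit set, that set would sit in $J_a$, hence in one of the $J_i$'s, and the cone-field expansion estimate (\ref{final_1}) contradicts uniform convergence of $\{S_a^{n}\}$ on a neighbourhood. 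Moreover, your count of at most $m^{\nu}$ attracting orbits only yields (c) after you have proved that every component of $\operatorname{int}(K_a^+)$ is attracted to one of the lifted cycles; without the missing step, you have not excluded additional attracting cycles that do not persist from $a=0$, nor non-wandering components whose limit set is a positive-dimensional submanifold. The paper closes both of these using maximality of $J_a$ in $\tilde U$ together with the Kobayashi-isometry argument of \cite{BS1}. Finally, the decomposition $\mbb C=F_\infty(p)\sqcup B_1\sqcup\cdots\sqcup B_m$ omits $J_p$, which is precisely the set where the perturbation analysis is delicate, so the reduction is incomplete even at the level of the one-variable picture. As written, the proposal is an interesting sketch of an alternative strategy, but the heart of the theorem, namely forbidding recurrence near $J_a$, is left open.
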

\no The proof of Theorem \ref{main theorem 1} is similar to the proof known for H\'{e}non maps in \cite{BS3} and uses the existence of invariant cone fields. Also Theorem \ref{main theorem 2}, for type $1-$shifts is a direct consequence of the fact that $S_a^{k-1}$ is regular and hyperbolic (see \cite{safikov-wolf}).

\medskip\no To prove Theorem \ref{main theorem 2} for type $2-$shifts in $\mbb C^3$, we first show that there exist appropriate closed invariant (under $S_a^2$) subsets $J_i$, $1 \le i \le 3$ such that $J_i \subset J_a$ and for each $i$, $S_a^2$ is hyperbolic on each of this $J_i$ in an appropriate neighbourhood.  Finally, we conclude the result from the fact that $J_a$ is  maximal in a certain neighbourhood and the stable manifold of $J_a$ is equal to the stable manifold of union of $J_i$, i.e., $W^s(J_a)=W^s(J_1 \cup J_2 \cup J_3).$ 

\medskip\no In Section \textbf{6}, we give a sketch of the proof of Theorem \ref{main theorem 1} and \ref{main theorem 2} in $\mbb C^k$ for any $\nu-$shift. The ideas are similar to the case $k=3.$ 

\medskip\no 
\textbf{Acknowledgements:} The author would like to thank Sushil Gorai and Kaushal Verma for helpful discussions and suggestions regarding the problem

\section{Some properties of shift--like maps}
\no In this section, we will first recall a few properties of shift--like maps from \cite{BP}. 

\medskip\no 
For $R>0$, let $V_R=\{z \in \mbb C^k: |z_i| \le R,\; 1 \le i \le k\}.$ Also for every $i$, $1 \le i \le k$
\[ V_R^i=\{z \in \mbb C^k: |z_i| \ge \max(|z_j|,R),\; 1 \le j \neq i \le k\}.\]
For a $\nu-$shift, $S_a$ the sets $V_R^+$ and $V_R^-$ is defined as:
\[ V_{R}^+=\bigcup_{i=k-\nu+1}^k V^i_R \text{ and } V_{R}^{-}=\bigcup_{i=1}^{k-\nu} V^i_R\] for every $R>0$.

\medskip\no 
Note that $\mbb C^k=V_R \cup V_R^+ \cup V_R^-$ and there exists $R>0$ (sufficiently large) such that $V_{R}$, $V_{R}^+$ and $V_{R}^-$ gives a  {\it filtration} of $\mbb C^k$ for the map $S_a$ (see \cite{BP}).

\medskip\no A few properties of these sets are stated below:
\begin{enumerate}
\item For a fixed $a \in \mbb C^*$, there exists $R_a \gg 1$ such that for every $R \ge R_a$
\[ S_a(V_{R}^+) \subset V_{R}^+.\] Also, from this it follows that for every $n \ge 0$
\[ S_a^{ -n}(V_{R}^+) \subset S_a^{ -(n+1)}(V_{R}^+).\]
\item Let $U_a^+=\{z \in \mbb C^k: S_a^{ n}(z) \to \infty \text{ as } n \to \infty\}$ and $K_a^+=\{ z \in \mbb C^k: S_a ^{n}(z) \text{ is bounded }\}.$ Then $$U_a^+= \bigcup_{n=0}^{\infty} S_a^{ -n}(V_{R}^+) \text{ and } K_a^+ \subset V_{R} \cup V_{R}^-.$$ Also, from (1) it follows that $U_a^+$ is connected open set in $\mbb C^k.$

\smallskip
\item For any $z \in \mbb C^k$ and a fixed $R \ge R_a$ there exists $n_z \ge 0$ such that either of the following is true:
\begin{itemize}
\item[(i)] $S_a^{ n}(z) \in V_{R}$ for every $n \ge n_z$.

\smallskip
\item[(ii)] $S_a^{ n}(z) \in V_{R}^+$  for every $n \ge n_z$ and $S_a^{ n}(z) \to \infty$ as $n \to \infty.$
\end{itemize}
Note that (ii) does not say that $S_a^{ n}(z) \cap V_{R}= \emptyset$ for $z \in U_a^+$ and $n \ge 0$. However, it says that a point in $U_a^+$ will eventually belong to $V_{R}^+$ under the iterations of $S_a.$ The orbit of $z$ might either intersect or not intersect $V_{R}.$

\smallskip
\item Since all other points eventually land up in $V_{R}$, from dynamical point of view it is sufficient to study the behaviour of $S_a$ in $V_{R}.$ 

\smallskip
\item The choice of the radius of filtration $R$ can be made independent of the choice of $a$, provided $a$ comes from a bounded subset of $\mbb C^*$, i.e., for $0<|a|<A$ there exists $R_A \gg 1$ such that if $R \ge R_A$, the sets $V_R$, $V_{R}^+$ and $V_{R}^-$ serves as filtration for every $S_a$.

\smallskip
\item Let $K_a^\pm=\{ z \in \mbb C^k: S_a^{ \pm n}(z) \text{ is bounded as }n \to \infty \}.$ Then $K_a^+ \subset V_R \cup V_R^-$ and $K_a^- \subset V_R \cap V_R^+. $

\medskip
\item Now let
\[ J_a^+=\partial K_a^+, \; J_a^-=\partial K_a^- \text{ and } J_a=J_a^+ \cap J_a^-.\] Then the following are true:
\begin{itemize}
\item $S_a(K_a^+)=S_a^{-1}(K_a^+)=K_a^+$ and $S_a(K_a^-)=S_a^{-1}(K_a^-)=K_a^-.$

\smallskip
\item $S_a(J_a^+)=S_a^{-1}(J_a^+)=J_a^+$ and $S_a(J_a^-)=S_a^{-1}(J_a^-)=J_a^-.$

\smallskip
\item $S_a(J_a)=J_a=S_a^{-1}(J_a)$ and $J_a \subset V_R.$
\end{itemize}
\end{enumerate}
 
 \medskip\no
\begin{prop}\label{shift-regular}
Let $S_a$ be a polynomial shift--like map of type $\nu$ in $\mbb C^k.$ Then the $\eta-$th iterate of $S_a$ is regular where $\eta=\nu(k-\nu).$
\end{prop}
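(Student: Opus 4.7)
The plan is to compute the indeterminacy loci $I^+(S_a^\eta)$ and $I^-(S_a^\eta)$ directly from the polynomial expressions of $S_a^\eta$ and $S_a^{-\eta}$, and verify they are disjoint in $\mbb P^k$. Writing $f_j^{(n)}:=(S_a^n(z))_j$, the shift structure immediately yields $f_j^{(n)}=f_{j+1}^{(n-1)}$ for $j<k$ and $f_k^{(n)}=p(f_{k-\nu+1}^{(n-1)})+af_1^{(n-1)}$. Iterating the first relation, every component is determined by the last-coordinate sequence $g^{(m)}:=f_k^{(m)}$ via $f_j^{(n)}=g^{(n-k+j)}$ whenever $n\ge k-j$, and $f_j^{(n)}=z_{j+n}$ otherwise. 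The recursion for $g$ reads $g^{(0)}=z_k$; $g^{(n)}=p(z_{k-\nu+n})+az_n$ for $1\le n<\nu$; $g^{(n)}=p(g^{(n-\nu)})+az_n$ for $\nu\le n<k$; and $g^{(n)}=p(g^{(n-\nu)})+ag^{(n-k)}$ for $n\ge k$.

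Next, I would prove by induction on $n$ that the top-degree homogeneous part of $g^{(n)}$ is a nonzero multiple of $z_{q(n)}^{d_n}$, where $d_n=d^{\lceil n/\nu\rceil}$ and $q$ is $\nu$-periodic with $q(0)=k$ and $q(i)=k-\nu+i$ for $1\le i\le\nu-1$. The induction uses monicity of $p$ and the observation that the extra summand ($az_n$ or $ag^{(n-k)}$) has strictly smaller degree than $p(g^{(n-\nu)})$; in the regime $n\ge k$ this requires $d_n>d_{n-k}$, which holds because $\lceil n/\nu\rceil-\lceil(n-k)/\nu\rceil\ge 1$ for $k>\nu$. For $n=\eta=\nu(k-\nu)$ the picture is clean: the maximum degree of $S_a^\eta$ equals $D:=d^{k-\nu}$, attained precisely by the $\nu$ components $f_j^{(\eta)}=g^{(\eta-k+j)}$ with $j\in\{k-\nu+1,\ldots,k\}$, and a direct check gives $q(\eta-k+j)=j$ for each such $j$. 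Homogenising $S_a^\eta$ to $\mbb P^k$ and restricting to $z_0=0$ therefore yields
\[[0:z_1:\cdots:z_k]\;\longmapsto\;[0:0:\cdots:0:c_{k-\nu+1}z_{k-\nu+1}^D:\cdots:c_k z_k^D],\]
with $c_j\ne 0$, so $I^+(S_a^\eta)=\{z_0=0,\ z_{k-\nu+1}=\cdots=z_k=0\}$.

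To obtain $I^-(S_a^\eta)=I^+(S_a^{-\eta})$ I would invoke the reversal symmetry $\sigma(z_1,\ldots,z_k)=(z_k,\ldots,z_1)$: conjugating $S_a^{-1}$ by $\sigma$ produces a shift-like map $T$ of type $k-\nu$ in $\mbb C^k$, with parameter $a^{-1}$ and polynomial $-a^{-1}p$. Since $(k-\nu)\nu=\eta$ as well, applying the preceding analysis to $T^\eta$ yields $I^+(T^\eta)=\{z_0=0,\ z_{\nu+1}=\cdots=z_k=0\}$, and transporting back through $\sigma$ (which fixes $z_0$ and swaps $z_i$ with $z_{k+1-i}$) gives $I^-(S_a^\eta)=\{z_0=0,\ z_1=\cdots=z_{k-\nu}=0\}$. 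Any point in $I^+(S_a^\eta)\cap I^-(S_a^\eta)$ would force $z_0=z_1=\cdots=z_k=0$, an impossibility in $\mbb P^k$, so $S_a^\eta$ is regular. The main bookkeeping hurdle is the inductive identification of the leading monomial of $g^{(n)}$, especially the strict degree inequality $d_n>d_{n-k}$ for $n\ge k$ that guarantees the $ag^{(n-k)}$ term does not contaminate the top-degree part; everything else is routine.
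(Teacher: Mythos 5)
Your proposal is correct and follows essentially the same route as the paper: identify the leading-degree terms of the iterates, homogenise to $\mbb P^k$, and check that the resulting indeterminacy loci $I^+$ and $I^-$ lie in disjoint coordinate subspaces of the hyperplane at infinity. The paper simply writes out $S_a^{m\nu}$ and $S_a^{-m(k-\nu)}$ explicitly and reads off both loci directly, whereas you track degrees via a recursion and obtain $I^-$ by the reversal conjugacy $\sigma$ — a tidy shortcut, but the substance of the argument is the same.
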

\begin{proof}
Let $d$ be the degree of $p.$

\begin{align*}
S_a^\nu \cord &=\big(z_{\nu+1},\hdots,z_k, az_1+p(z_{k-\nu+1}),az_2+p(z_{k-\nu+2}), \hdots , az_\nu+p(z_k) \big)\\
S_a^{-(k-\nu)}\cord &= \big(a^{-1}\{z_{\nu+1}-p(z_1)\},\hdots, a^{-1}\{z_k-p(z_{k-\nu})\}, z_1,\hdots,z_{\nu}\big).
\end{align*}
Let $\cal{S}_a^{m}$ and $\cal{S}_a^{-m}$ denote the map corresponding to $S_a^{m\nu}$ and $S_a^{-m(k-\nu)}$ in $\mbb P^{k+1}$ respectively for $m \ge 1$. Then
\begin{align*}
\cal{S}_a^{1}[z:w]=&[z_{\nu+1}w^{d-1}:\cdot\cdot:z_kw^{d-1}:z_{k-\nu+1}^d+wh_{k-\nu+1}^1(z,w):\cdot\cdot:z_{k}^d+wh_{k}^1(z,w): w^d]\\
\cal{S}_a^{-1}[z:w]=&[z_1^d+w\tilde{h}_1^1(z,w):\cdot\cdot:z_{k-\nu}^d+w\tilde{h}_{k-\nu}^1(z,w):w^{d-1}z_1\cdot\cdot:w^{d-1}z_{\nu}: w^d]
\end{align*}
where degree of $h_{k-\nu+i}^1< d$ and degree of $\tilde{h}_{j}^1< d$ for every $1 \le i \le \nu$ and $1 \le j \le k-\nu$ respectively.
For $m \ge 2$
\begin{align*}
\cal{S}_a^{m}[z:w]=&[wh_1^m(z,w):\cdot\cdot:z_{k-\nu+1}^{d^m}+wh_{k-\nu+1}^m(z,w):\cdot\cdot:z_{k}^d+wh_{k}^m(z,w): w^{d^m}]\\
\cal{S}_a^{-m}[z:w]=&[z_1^{d^m}+w\tilde{h}_1^m(z,w):\cdot\cdot:z_{k-\nu}^{d^m}+w\tilde{h}_{k-\nu}^m(z,w):w\tilde{h}_{k-\nu+1}^m(z,w)\cdot\cdot:w\tilde{h}_{k}^m(z,w): w^{d^m}]
\end{align*}
where degree of $h_{i}^m, \; \tilde{h}_i^m< d^m$  for every $1 \le i \le k.$ 

\medskip\no Hence the indeterminacy sets $I^+_m$ and $I^-_m$ of $\cal{S}_a^m$ and $\cal{S}_a^{-m}$  is given by
\begin{align*}
I^+=I^+_m&= \{[z_1:\cdots:z_{k-\nu}:0:\cdots:0:0]: z_i \neq 0 \text{ for some }i, \; 1 \le i \le k-\nu\}\\
I^-=I^-_m&= \{[0:\cdots:0: z_{k-\nu+1}:\cdots:z_k:0]: z_i \neq 0 \text{ for some }i, \;k-\nu+1 \le i \le k\}
\end{align*} 
for every $m \ge 1.$ Note that $I^+ \cap I^-=\emptyset$, thus $S_a^{\nu(k-\nu)}$ is regular.
\end{proof}
\begin{lem}\label{k-}
Let $S_a$ be a polynomial shift--like map of type $1 \le \nu\le k-1$ in $\mbb C^k.$ Then for $0<|a|<1$, $int(K_a^-)=\emptyset$, i.e., $K_a^-=J_a^-.$
\end{lem}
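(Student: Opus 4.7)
My plan is to argue that the inverse map $S_a^{-1}$ strictly expands Lebesgue volume and combine this with a Baire category trick to rule out a non-empty interior of $K_a^-$.

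First, I would compute the complex Jacobian determinant of $S_a$. Expanding the matrix $DS_a$ along the first column, which only has the entry $a$ in row $k$, and observing that the resulting $(k-1)\times(k-1)$ minor is the identity, gives $|\det_{\mathbb{C}} DS_a(z)| = |a|$ for every $z \in \mathbb{C}^k$. Hence $|\det_{\mathbb{C}} DS_a^{-n}(z)| = |a|^{-n}$, and the real volume distortion factor of $S_a^{-n}$ is $|a|^{-2n}$, which tends to infinity since $|a| < 1$.

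Next, suppose for contradiction that $\text{int}(K_a^-) \neq \emptyset$, and pick a non-empty open set $U \subset \text{int}(K_a^-)$. For each $z \in U$ the backward orbit $\{S_a^{-n}(z)\}_{n \ge 0}$ is bounded, but the bound a priori depends on $z$. Writing
\[ U = \bigcup_{M \ge 1} E_M, \qquad E_M = \bigl\{ z \in U : \sup_{n \ge 0} \|S_a^{-n}(z)\| \le M \bigr\}, \]
each $E_M$ is closed in $U$ (as a nested intersection of closed conditions), so by the Baire category theorem some $E_{M_0}$ has non-empty interior $V$. Then $S_a^{-n}(V) \subset V_{M_0}$ for all $n \ge 0$, where $V_{M_0}$ is the polydisc of radius $M_0$, which has finite Lebesgue volume.

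Finally, by the change of variables formula for the biholomorphism $S_a^{-n}$ together with the Jacobian computation above,
\[ \mathrm{vol}\bigl(S_a^{-n}(V)\bigr) = \int_V \bigl|\det_{\mathbb{C}} DS_a^{-n}(z)\bigr|^2\, dV(z) = |a|^{-2n}\, \mathrm{vol}(V), \]
which tends to infinity as $n \to \infty$. This contradicts $S_a^{-n}(V) \subset V_{M_0}$. Hence $\text{int}(K_a^-) = \emptyset$, and since $K_a^-$ is closed we get $K_a^- = \partial K_a^- = J_a^-$. I do not expect a major obstacle here; the only mildly delicate step is the Baire argument to upgrade pointwise-bounded backward orbits on $U$ to a uniformly bounded orbit on an open subset, since without uniformity the volume bound cannot be applied.
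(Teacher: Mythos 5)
Your argument is correct and establishes exactly what the lemma asserts, but it is a genuinely different route from the paper's. You both start from the same observation that $|\det_{\mathbb{C}} DS_a| \equiv |a|$, so $S_a^{-1}$ expands real $2k$-volume by the factor $|a|^{-2}$, but you then handle the possible non-uniformity of the backward-orbit bounds on $K_a^-$ via Baire category: you write an open subset $U \subset \mathrm{int}(K_a^-)$ as a countable union of the closed sets $E_M$ of points whose full backward orbit lies in the polydisc $V_M$, extract an open $V \subset E_{M_0}$, and then the volume blow-up $\mathrm{vol}(S_a^{-n}(V)) = |a|^{-2n}\,\mathrm{vol}(V) \to \infty$ contradicts $S_a^{-n}(V) \subset V_{M_0}$. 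The paper instead leans on the filtration $V_R, V_R^+, V_R^-$ of shift-like maps: it sets $S_n = K_a^- \setminus S_a^n(V_R^+)$, notes that $S_n = S_a^n(S_0)$ forms an increasing sequence of bounded sets while $m(S_n) = |a|^{2n}m(S_0)$ decreases, and from this (together with an invariance argument for $K_a$) concludes the stronger fact that $m(K_a^-) = 0$, of which empty interior is a corollary. What your approach buys is independence from the filtration machinery: you only need the Jacobian, Baire's theorem, and the fact that $K_a^-$ is closed. What the paper's approach buys is the stronger conclusion that $K_a^-$ is Lebesgue-null, not merely nowhere dense, and it stays within the same filtration framework used throughout the rest of the paper. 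Both are valid; yours is the more elementary and self-contained proof of the statement as literally given.
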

\begin{proof}
Recall that for $R>0$ the sets $V_R$, $V_R^+$ and $V_R^-$ gives a filtration for $S_a.$ Now for every $n \ge 1$ let $S_n$ be the sets defined as $$S_n=K_a^- \setminus S_a^n(V_R^+).$$ Since $K_a^- \setminus V_R^+$ is contained in a $V_R$ and $S_a(V_R^+) \subset V_R^+$, $S_n \subset S_{n+1}$ for every $n \ge 0$, i.e., $S_n$ is an increasing collection. If the Lebesgue measure of $S_0$, i.e., $m(S_0) \neq 0$, then $m(S_n)=|a|^n m(S_0)$ or $m(S_{n+1})< m(S_n)$ which is contradiction! Hence $m(S_0)=m(S_n)=0$ for every $n \ge 0.$ Now $K_a^- \subset V_R \cup V_R^+$, i.e., $m(K_a^-\setminus  V_R)+m(K_a^- \setminus  V_R^+)=0$ or $m(K_a^- \setminus  V_R)=0.$

\medskip\no 
\textit{Claim: } $m(K_a^- \setminus K_a)=0.$

\medskip\no 
Suppose $m(K_a^- \setminus K_a)\neq 0$ then there exists $z \in K_a^- \setminus K_a$ such that $B_{\ep}(z) \subset K_a^- \setminus K_a$ for some $\ep>0.$ Hence $B_{\ep}(z) \subset U^+_a$ or $S_a^n(B_{\ep}(z)) \subset V_R^+ \subset K_a^- \setminus V_R $ for sufficiently large $n$, i.e., $m(K_a^-\setminus  V_R)\ge |a|^n\ep>0$ which is a contradiction!

\medskip\no 
Also $m(K_a)=|a|m(K_a)=m(K_a)$, as $K_a$ is a compact set completely invariant under $S_a$. But $|a|<1$, hence $m(K_a)=0.$ Thus $m(K_a^-)=0.$
\end{proof}
\section{Proof of Theorem \ref{degeneration}}
\no Let $p$ be a monic polynomial in one variable of degree $d \ge 2$ . Recall that the Green function and the equilibrium measure associated to $p$ is defined as:
\[ G_p(z) =\lim_{n \to \infty} \frac{\log^+\|p^n(z)\|}{d^n}  \text{ and } \mu_p=\frac{1}{2 \pi}dd^c {G_p}.\]
Further, recall from Section \textbf{1}, that the notation $p_\nu$ was used to denote a map of the form
\[ p_{\nu}(z_1,\dots,z_{\nu})=(p(z_1),\hdots,p(z_{\nu}))\]
in $\mbb C^ \nu$, $\nu \ge 1.$
\begin{lem}\label{Step 1}
Let $G_p$ and $\mu_p$ be the Green function and equilibrium measure corresponding to the monic polynomial $p.$ Then for $\nu \ge 1$
\[\mu_{p_\nu}=\underbrace{\mu_p \wedge\cdots\wedge \mu_p}_{\nu-\text{times}}\]
and $\mu_{p_\nu}$ is the complex equilibrium measure on $K_p^\nu \subset \mbb C^\nu$ where $K_p$ is the filled Julia set for $p.$
\end{lem}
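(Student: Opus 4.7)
The strategy is to compute $G_{p_\nu}$ explicitly in terms of $G_p$ and then deduce both assertions from classical facts about pluri-complex Green functions and their Monge--Amp\`ere measures on Cartesian products.

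\textbf{Step 1.} Since the Euclidean norm of $p_\nu^n(z)=(p^n(z_1),\ldots,p^n(z_\nu))$ is comparable with $\max_i|p^n(z_i)|$, one reads off from the definition
\[ G_{p_\nu}(z_1,\ldots,z_\nu) \;=\; \lim_{n\to\infty}\frac{\log^+ \max_i |p^n(z_i)|}{d^n} \;=\; \max_{1 \le i \le \nu} G_p(z_i). \]
Writing $u_i(z) := G_p(z_i)$, each $u_i$ is a non-negative psh function on $\mbb C^\nu$ that depends on only one variable, and $G_{p_\nu} = \max(u_1,\ldots,u_\nu)$.

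\textbf{Step 2.} The first assertion is then equivalent to the Monge--Amp\`ere identity
\[ \left( \frac{dd^c \max(u_1,\ldots,u_\nu)}{2\pi} \right)^{\!\nu} \;=\; \pi_1^*\mu_p \wedge \cdots \wedge \pi_\nu^*\mu_p, \]
where $\pi_i:\mbb C^\nu \to \mbb C$ is the $i$-th coordinate projection. The heuristic is that $(dd^c u_i)^2 = 0$ for every $i$ (since $u_i$ depends on a single complex variable), so in a multinomial expansion of $(dd^c \max u_i)^\nu$ only the fully mixed wedge $dd^c u_1 \wedge \cdots \wedge dd^c u_\nu$ can survive. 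One makes this rigorous by approximating $G_p$ by a decreasing sequence of smooth psh functions with the same logarithmic growth at infinity, iterating the identity
\[ dd^c \max(v,w) \;=\; \chi_{\{v \ge w\}}\, dd^c v + \chi_{\{w > v\}}\, dd^c w, \]
and passing to the limit via the Bedford--Taylor continuity theorem. A mass check using $\int_{\mbb C} dd^c G_p = 2\pi$ for any monic $p$ shows that no combinatorial prefactor appears.

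\textbf{Step 3.} For the second assertion, recall that for a monic $p$ of degree $d \ge 2$, $G_p$ coincides with the Siciak extremal (pluri-complex Green) function $V_{K_p}$ of the filled Julia set. The classical product formula for extremal functions of Cartesian products then yields $V_{K_p^\nu}(z) = \max_i V_{K_p}(z_i) = G_{p_\nu}(z)$. Hence $\mu_{p_\nu} = (dd^c V_{K_p^\nu}/(2\pi))^\nu$ is, by definition, the complex equilibrium measure of $K_p^\nu \subset \mbb C^\nu$.

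\textbf{Main obstacle.} The substantive work lies in Step 2: the Monge--Amp\`ere power of a non-smooth maximum has to be handled carefully on the coincidence sets $\{u_i = u_j\}$, where the naive localisation argument breaks down and the full Bedford--Taylor calculus for currents defined by continuous psh functions is required.
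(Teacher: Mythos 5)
Your Steps 1 and 3 match the paper: after observing $G_{p_\nu}(z)=\max_i G_p(z_i)$, the paper simply notes that this is the Siciak extremal function of $K_p^\nu$ and cites Proposition~2.2 of Bedford--Taylor [BT2] for the product formula for the Monge--Amp\`ere measure, which gives both assertions at once. So your plan as a whole is the right one, and Step~3 alone, correctly referenced, already carries the full weight of the lemma.

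Step~2, however, contains a genuine error. The identity
\[
dd^c\max(v,w)=\chi_{\{v\ge w\}}\,dd^cv+\chi_{\{w>v\}}\,dd^cw
\]
is false as a current identity, even for smooth plurisubharmonic $v,w$. Taking $v(z)=\operatorname{Re}z$ and $w(z)=-\operatorname{Re}z$ on $\mbb C$ gives $\max(v,w)=|\operatorname{Re}z|$, whose $dd^c$ is a nonzero measure supported on $\{\operatorname{Re}z=0\}$, whereas the right side is identically zero. The point is exactly that $dd^c\max(v,w)$ can charge the ridge $\{v=w\}$ even when neither $dd^cv$ nor $dd^cw$ does, and in the present application the coincidence sets $\{G_p(z_i)=G_p(z_j)\}$ carry the entire mass of $\mu_{p_\nu}$. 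One cannot therefore "iterate" this formula as a black box; the cancellation of ridge contributions in the top wedge power $(dd^c\max(u_1,\dots,u_\nu))^\nu$ is precisely the nontrivial content of the Bedford--Taylor product theorem. Your "main obstacle" remark shows you sensed the difficulty, but the mechanism you propose for overcoming it starts from a false formula, so Step~2 as written does not close the gap. If you want a self-contained proof rather than a citation, you would instead use the Bedford--Taylor comparison/locality principle that $(dd^c\max(u,v))^n=(dd^cu)^n$ on the open set $\{u>v\}$, together with a global mass-balance argument, or approximate by $\frac1p\log(e^{pu_1}+\dots+e^{pu_\nu})$ and justify convergence of the MA measures.
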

\begin{proof}
Note that $G_{p_{\nu}}\cord=\max\{G_p(z_i):1 \le i \le k\}$, i.e., $G_{p_\nu}$ is the extremal function for $K_p^\nu.$ Now by Proposition 2.2 from \cite{BT2}, the result follows.
\end{proof}
\begin{lem}\label{Step 2}
Let $S_a$ denote a $\nu-$shift of the polynomial $p.$ Then there exists a graph in $\nu$ variables (say $\Gamma_\nu$) such that $S_0^{k-\nu}(\mbb C^k)=\Gamma_\nu$ where 
\[ S_0\cord=(z_2,\hdots,z_k,p(z_{k-\nu+1})).\]
\end{lem}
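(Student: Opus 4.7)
The plan is to prove this lemma by a direct iterative calculation on coordinates. The crucial structural observation about $S_0$ is that it is essentially a left-shift: the first input coordinate $z_1$ is discarded, so $S_0\cord$ depends only on $z_2,\hdots,z_k$. By induction, $S_0^m\cord$ depends only on $z_{m+1},\hdots,z_k$ for $1 \le m \le k-1$, since each application of $S_0$ throws away the current first coordinate and inserts a new last coordinate built only from the current coordinates $z_2,\hdots,z_k$ of the previous state. In particular, taking $m = k-\nu$ shows that $S_0^{k-\nu}(\mbb C^k)$ is swept out as the tuple $(z_{k-\nu+1},\hdots,z_k)$ varies over $\mbb C^\nu$, while the remaining input coordinates are irrelevant; hence the image has complex dimension at most $\nu$.

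Next I would compute the first $\nu$ coordinates of $S_0^{k-\nu}\cord$ explicitly. Since each application of $S_0$ pushes the coordinate index down by one (the $j$-th output equals the $(j+1)$-th input for $j<k$), a straightforward induction yields $(S_0^{k-\nu}(z))_i = z_{k-\nu+i}$ for $1 \le i \le \nu$. Thus the first $\nu$ coordinates of every point in the image are precisely the parameters, which I will rename $(\zeta_1,\hdots,\zeta_\nu) := (z_{k-\nu+1},\hdots,z_k)$. For the remaining coordinates $\nu+1 \le j \le k$, the recursion $(S_0^m(z))_k = p((S_0^{m-1}(z))_{k-\nu+1})$ together with the shift identity lets me identify $(S_0^{k-\nu}(z))_j$ as a polynomial expression $F_j(\zeta_1,\hdots,\zeta_\nu)$, built from iterates of $p$ applied to the $\zeta_i$'s.

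Putting these together, the image is
\[ \Gamma_\nu = \{(\zeta_1,\hdots,\zeta_\nu,F_{\nu+1}(\zeta),\hdots,F_k(\zeta)) : \zeta \in \mbb C^\nu\}, \]
which is a graph over $\mbb C^\nu$, and the map $\phi : \mbb C^\nu \to \Gamma_\nu$ sending $\zeta$ to $(\zeta,F_{\nu+1}(\zeta),\hdots,F_k(\zeta))$ is a biholomorphism, its inverse being the projection onto the first $\nu$ coordinates (both being polynomial, hence holomorphic). There is no genuine technical obstacle here; the argument is an indexing/bookkeeping exercise. The only mildly delicate part is tracking which iterate $p^{\ell}$ appears in each $F_j$, because this depends on how $j-\nu$ compares to multiples of $\nu$ (the recursion $g_m^{(k)} = p(g_{m-\nu}^{(k)})$ kicks in only once $m \ge \nu$). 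Verifying the pattern on small cases such as $(k,\nu)=(4,1),(4,2),(4,3)$ makes the recursion transparent and confirms the graph structure in general.
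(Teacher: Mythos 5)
Your proof is correct and follows essentially the same route as the paper: compute $S_0^{k-\nu}$ on coordinates, observe that the first $\nu$ output coordinates are exactly $z_{k-\nu+1},\hdots,z_k$ (by the shift structure), and note that the trailing $k-\nu$ coordinates are polynomial in those, hence the image is a graph over $\mbb C^\nu$. The paper makes the same observation but writes out the explicit parametrization of $\Gamma_\nu$ in the two cases $\nu\ge k-\nu$ and $\nu<k-\nu$ (with $k-\nu=l\nu+r$), which is the bookkeeping you defer to small examples; since the later lemmas (in particular Lemma~\ref{Step 5}) actually use the explicit form of $\Gamma_\nu$, the paper's version is the more useful one to carry forward.
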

\begin{proof} 
\textit{Case 1:} Suppose $\nu \ge k-\nu.$ Then consider
\[ \Gamma_{\nu}=\big(z_{1},\hdots,z_{\nu},p(z_1),\hdots,p(z_{k-\nu})\big) .\]
Now
\[ S_0^{k-\nu}\cord=\big (z_{k-\nu+1},\hdots,z_k,p(z_{k-\nu+1}),\hdots,p(z_{2(k-\nu)})\big). \]Hence the proof.

\medskip\no 
\textit{Case 2: }Suppose $\nu< k-\nu$. Let $l \ge 1$ such that $k-\nu=l \nu +r$ where $0 \le r < \nu.$ Consider $$\Gamma_\nu=(z_{1},\hdots,z_\nu,p(z_1),\hdots,p(z_\nu),\hdots,p^{l+1}(z_1),\hdots,p^{l+1}(z_r)).$$
 Now $S_0^{k-\nu}\equiv S_0^{l\nu+r}$ and 
 \small{$$S_0^{l\nu+r}\cord=\big(z_{k-\nu+1},\hdots, z_k,\hdots,p^l(z_{k-\nu+1}),\hdots,p^l(z_k),p^{l+1}(z_{k-\nu+1}),\hdots,p^{l+1}(z_{k-\nu+r})\big).$$}
 Hence $S_0^{k-\nu}(\mbb C^k)=\Gamma_\nu.$ 
\end{proof}
\begin{rem}\label{Step 3}
For $\nu \ge k-\nu$ 
\[ S_0^\nu\big(z_{2 \nu-k+1},\hdots,z_{\nu},p(z_1),\hdots,p(z_{\nu})\big)=\big(p(z_{2 \nu-k+1}),\hdots,p(z_{\nu}),p^2(z_1),\hdots,p^2(z_{\nu})\big).\]
Otherwise
\begin{align*}
&S_0^\nu\big(z_{1},\hdots,z_\nu,p(z_1),\hdots,p(z_\nu),\hdots,p^{l+1}(z_1),\hdots,p^{l+1}(z_r)\big)\\ &=\big(p(z_{1}),\hdots,p(z_\nu),p^2(z_1),\hdots,p^2(z_\nu),\hdots,p^{l+2}(z_1),\hdots,p^{l+2}(z_r)\big)
\end{align*}
Thus $S_0^{\nu}(\Gamma_{\nu}) =\Gamma_{\nu}.$
\end{rem}
\begin{rem}\label{Step 4}
The map $S_0^{\nu}$ on the graph $\Gamma_{\nu}$ is equivalent to the map $p_{\nu}$ in $\mbb C^\nu$, i.e., the following diagram commutes
\[
\begin{tikzcd}
\mbb C^\nu \arrow{r}{p_\nu}\arrow[swap]{d}{\phi}&\mbb  C^\nu \arrow{d}{\phi}\\
\Gamma_{\nu} \arrow{r}{S_0^{\nu}} & \Gamma_\nu
\end{tikzcd}.
\]
where $\phi$ is the map to the graph $\Gamma_\nu.$
\end{rem}
\begin{lem}\label{Step 5}
Let $G_a^+$ denote the positive Green function for $S_a$, i.e., for $z \in \mbb C^k$
\[ G_a^+(z)=\lim_{n \to \infty} \frac{\log^+\|S_a^{\nu n}(z)\|}{d^n}.\]
Then 
\[ \mu_0=\big(dd^c({G_0^+}_{|\Gamma_\nu})\big)^{\nu}=d^{k-\nu}(2\pi)^{\nu}\pi_*(\mu_{p_\nu})\]
where $\pi$ is the projection of $\mbb C^k$ to the last $\nu-$coordinates. 
\end{lem}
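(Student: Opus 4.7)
My plan is to pull the entire computation back to $\mbb C^\nu$ via the graph parametrization $\phi: \mbb C^\nu \to \Gamma_\nu$ and then apply Bedford--Taylor theory exactly as in Lemma \ref{Step 1}. First, iterating the intertwining relation $S_0^\nu \circ \phi = \phi \circ p_\nu$ from Remark \ref{Step 4} gives $S_0^{\nu n}\circ \phi = \phi \circ p_\nu^n$, so that
\[G_0^+(\phi(w)) = \lim_{n \to \infty}\frac{\log^+\|\phi(p_\nu^n(w))\|}{d^n}.\]

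By Lemma \ref{Step 2}, each of the $k$ coordinates of $\phi(z)$ has the form $p^{m_i}(z_{j_i})$ for some $m_i \ge 0$ and some $j_i \in \{1,\ldots,\nu\}$; consequently the $i$-th coordinate of $\phi(p_\nu^n(w))$ is $p^{n+m_i}(w_{j_i})$. Using the elementary asymptotic $\log^+|p^N(z)|/d^N \to G_p(z)$ for the monic polynomial $p$, the limit above becomes
\[G_0^+(\phi(w)) \;=\; \max_{1\le i\le k} d^{m_i} G_p(w_{j_i}) \;=\; \max_{1\le j\le \nu} d^{M_j} G_p(w_j),\]
where $M_j := \max\{m_i : j_i = j\}$ is the highest iterate of $p$ applied to the variable $w_j$ appearing among the coordinates of $\phi$. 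A direct inspection of the two cases of Lemma \ref{Step 2} shows $\sum_{j=1}^\nu M_j = k-\nu$: in Case 1 there are exactly $k-\nu$ values of $j$ with $M_j = 1$ and the rest are $0$, while in Case 2 one computes $r(l+1) + (\nu-r)l = r + l\nu = k-\nu$.

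Since each summand $d^{M_j} G_p(w_j)$ is PSH in the single variable $w_j$ alone, the function $\max_j d^{M_j} G_p(w_j)$ falls under the same Bedford--Taylor framework used in Lemma \ref{Step 1}. Proposition 2.2 of \cite{BT2} (applied as there) gives
\[\phi^*\mu_0 \;=\; \big(dd^c \max_j d^{M_j} G_p(w_j)\big)^\nu \;=\; \bigwedge_{j=1}^\nu d^{M_j}\, dd^c G_p(w_j) \;=\; d^{k-\nu}(2\pi)^\nu\, \mu_{p_\nu},\]
using $\bigwedge_j dd^c G_p(w_j) = (2\pi)^\nu \mu_{p_\nu}$ together with $\prod_j d^{M_j} = d^{\sum_j M_j} = d^{k-\nu}$. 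Pushing this forward by $\phi$ transports the identity onto $\Gamma_\nu$ and gives the desired formula, with the right-hand side interpreted as the measure on $\Gamma_\nu$ obtained from $d^{k-\nu}(2\pi)^\nu \mu_{p_\nu}$ via the graph parametrization.

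The main technical point is the wedge-product identity $(dd^c \max_j u_j)^\nu = \bigwedge_j dd^c u_j$ for PSH functions $u_j$ depending on disjoint sets of variables --- this is precisely the Bedford--Taylor fact already invoked in Lemma \ref{Step 1}, and it extends immediately to constant positive rescalings of the $u_j$'s. Everything else reduces to reading off the formulas for $\phi$ from Lemma \ref{Step 2} and combining them with the definition of the Green function $G_0^+$.
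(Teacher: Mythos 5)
Your proof is correct and follows essentially the same route as the paper's: restrict $G_0^+$ to $\Gamma_\nu$, rewrite it as a max of rescaled one-variable Green functions in disjoint variables, and apply the Bedford--Taylor identity from Lemma \ref{Step 1} to compute the Monge--Amp\`ere measure, with the total rescaling exponent $\sum_j M_j = k-\nu$ producing the factor $d^{k-\nu}$. The only cosmetic differences are that you derive $G_0^+\circ\phi$ from the intertwining relation and the asymptotics of $\log^+|p^N|/d^N$ rather than from the filtration identity $G_0^+ = G_{p_\nu}\circ\pi$, and that you unify the two cases of Lemma \ref{Step 2} via the exponents $M_j$; note also that your final expression $\phi_*(\mu_{p_\nu})$ agrees with what the paper's proof actually produces, so the $\pi_*$ appearing in the lemma statement is a typo.
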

\begin{proof}
Recall that for $z \in \mbb C^k$
\[ G_0^+(z)=\lim_{n \to \infty}\frac{\log^+\|S_0^{\nu n}(z)\|}{d^n}.\]
From the filtration of $S_a$ it follows that
\[ G_0^+(z)=G_{p_{\nu}}(\pi(z))\]
\no \textit{Case 1: }For $\nu \ge k-\nu$ and $z \in \Gamma_\nu$
\begin{align*}
G_0^+(z)&=G_{p_{\nu}}\big(z_{k-\nu+1},\hdots,z_\nu,p(z_1),\hdots,p(z_{k-\nu})\big)\\
&=\max\{dG_p(z_1),\hdots,dG_p(z_{k-\nu}),G_p(z_{k-\nu+1}),\hdots,G_p(z_{\nu})\}\\
&=G_{p_\nu}\big(p(z_1),\hdots,p(z_{k-\nu}),z_{k-\nu+1},\hdots,z_\nu\big).
\end{align*} 
Now from Lemma \ref{Step 1}
\[\mu_0= \big(dd^c({G_0^+}_{|\Gamma_\nu})\big)^{\nu} =d^{k-\nu}(2 \pi)^\nu \phi_*(\mu_{p_\nu}).\]
\textit{Case 2: }For $\nu < k-\nu$ and $z \in \Gamma_\nu$
\begin{align*}
G_0^+(z)&=G_{p_{\nu}}\big(p^l(z_{r+1}),\hdots,p^l(z_\nu),p^{l+1}(z_1),\hdots,p^{l+1}(z_{r})\big)
\end{align*}
where $k-\nu=l \nu+r$ for $l \ge 1$ and $0 \le r < \nu. $ Arguing as in \textit{Case 1} and using Lemma \ref{Step 1}, it follows that
\begin{align*}
 \mu_0=\big(dd^c({G_0^+}_{|\Gamma_\nu})\big)^{\nu}&=d^{l(\nu-r)+(l+1)r}(2 \pi)^\nu \phi_*(\mu_{p_\nu})\\
 &=d^{k-\nu}(2 \pi)^\nu \phi_*(\mu_{p_\nu}).
 \end{align*}
\end{proof}
\begin{lem}\label{Step 6}
For any compact set $K \in \mbb C^k \setminus \Gamma_\nu$, there exists $A_K>0$ such that if $0<|a|<A_K$, then $S_a^{-(k-\nu)}(K) \subset V_R^-$ where $R$ is the radius of filtration whenever $|a|<1$
\end{lem}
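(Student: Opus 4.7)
The plan is to combine the explicit formula for $S_a^{-(k-\nu)}$ coming from Proposition \ref{shift-regular} with a simple defining equation for the graph $\Gamma_\nu$. The first observation I would record is that in both cases of Lemma \ref{Step 2}, the graph $\Gamma_\nu$ is exactly the common zero locus of the polynomials $z_{\nu+i} - p(z_i)$ for $1 \le i \le k-\nu$. In Case 1 this is immediate from $\Gamma_\nu = (z_1, \ldots, z_\nu, p(z_1), \ldots, p(z_{k-\nu}))$; in Case 2 the iterated blocks still satisfy $p^{j+1}(z_i) = p(p^j(z_i))$, and the coordinate positions line up so that the entry at index $\nu+j$ is always $p$ applied to the entry at index $j$. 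From this characterization, compactness of $K$ together with $K \cap \Gamma_\nu = \emptyset$ produces the two quantitative constants I need:
\[
\delta := \inf_{z \in K} \max_{1 \le i \le k-\nu} |z_{\nu+i} - p(z_i)| > 0,
\qquad
M := \sup_{z \in K} \max_{1 \le i \le \nu} |z_i| < \infty.
\]

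Next, I would invoke the formula
\[
S_a^{-(k-\nu)}\cord = \bigl(a^{-1}(z_{\nu+1} - p(z_1)), \ldots, a^{-1}(z_k - p(z_{k-\nu})), z_1, \ldots, z_\nu\bigr)
\]
from the proof of Proposition \ref{shift-regular}. For each $z \in K$, pick an index $i_0 = i_0(z) \in \{1, \ldots, k-\nu\}$ realizing the maximum $|z_{\nu+i_0} - p(z_{i_0})| \ge \delta$. Then the $i_0$-th coordinate of $S_a^{-(k-\nu)}(z)$ has modulus at least $\delta/|a|$, and by the choice of $i_0$ it dominates all other coordinates among the first $k-\nu$. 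The last $\nu$ coordinates are uniformly bounded by $M$.

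Finally, I would set $A_K := \min\{1,\, \delta/(R + M + 1)\}$ (any explicit small enough choice works). For $0 < |a| < A_K$ one has $\delta/|a| > R$ and $\delta/|a| > M$, so the $i_0$-th coordinate of $S_a^{-(k-\nu)}(z)$ strictly dominates every other coordinate and also exceeds $R$; thus $S_a^{-(k-\nu)}(z) \in V_R^{i_0} \subset V_R^-$. The only step requiring a little care is verifying the cut-out description of $\Gamma_\nu$ in Case 2 of Lemma \ref{Step 2}; once this is in hand, the remainder of the argument is just compactness and elementary estimation.
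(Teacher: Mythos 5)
Your proof is correct and takes essentially the same route as the paper: the paper also writes down the explicit coordinates of $S_a^{-(k-\nu)}$, uses compactness of $K$ to get a uniform lower bound $C>0$ on $\max_i|z_{\nu+i}-p(z_i)|$, bounds $K$ inside a ball of radius $M$, and then chooses $A$ so that $A^{-1}C>\max\{M,R\}$. The only step you spell out that the paper leaves implicit is the characterization of $\Gamma_\nu$ as the common zero locus of $z_{\nu+i}-p(z_i)$, $1\le i\le k-\nu$ (needed to conclude $c(z)\neq 0$ off $\Gamma_\nu$), and this is a worthwhile sentence to have on the record.
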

\begin{proof}
Recall that
\begin{align*}
S_a^{-(k-\nu)}\cord &= \big(a^{-1}\{z_{\nu+1}-p(z_1)\},\hdots, a^{-1}\{z_k-p(z_{k-\nu})\}, z_1,\hdots,z_{\nu}\big).
\end{align*}
Pick a $z \in K$. Since $z \notin \Gamma_\nu$ it follows that 
\[ c(z)=\max\{|z_{\nu+i}-p(z_i)|: 1 \le i \le k-\nu\} \neq 0.\]
As $K$ is compact there exists a constant $C>0$ such that $c(z)>C$ for every $z \in K.$ Further, choose $M>0$ such that $K \subset B(0;M)$, i.e., the ball of radius $M$ at the origin. Now choose $0<A<1$ sufficiently small such that $A^{-1}C> \max\{M,R\}.$ This complete the proof.
\end{proof}
\begin{rem}\label{Step 7}
Recall Theorem 9 from \cite{BP}. The negative Green function of $S_a$ is given by
\[ G_a^-(z)=\lim_{n \to \infty} \frac{\log^+\|S_a^{-(k-\nu) n}(z)\|}{d^n}.\] 
Further, on $V_R^-$ 
\[ G_a^-(z)=\log\|z\|+ o(1)\]
and $o(1) \to 0$ uniformly as $z \to \infty.$
\end{rem}
\begin{lem}\label{Step 8}
Given any compact set $K $ in $\mbb C^k \setminus \Gamma_{\nu}$
\[ \lim_{a \to 0} G_a^-(z)+\frac{\log|a|}{d}=\frac{1}{d}\Big\{\log\big(\max\{|z_{\nu+i}-p(z_i)|: 1 \le i \le k-\nu\}\big)\Big\}\]
whenever $z \in K.$
\end{lem}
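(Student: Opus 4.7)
The plan is to exploit the functional equation $G_a^-\bigl(S_a^{-(k-\nu)}(z)\bigr)=d\,G_a^-(z)$, which follows directly from the definition of $G_a^-$ recalled in Remark \ref{Step 7}: unwinding one iterate of $S_a^{-(k-\nu)}$ inside the limit shifts the exponent on $d$ by one. Combined with the asymptotic expansion $G_a^-(w)=\log\|w\|+o(1)$ valid for $w\in V_R^-$ (also recalled in Remark \ref{Step 7}), this will reduce the computation of $G_a^-(z)$ to estimating the size of $S_a^{-(k-\nu)}(z)$.

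Concretely, I would proceed as follows. Given the compact set $K\subset\mbb C^k\sm\Gamma_\nu$, use Lemma \ref{Step 6} to fix $A_K>0$ so that for every $0<|a|<A_K$ and every $z\in K$ the point
\[
w_a(z):=S_a^{-(k-\nu)}(z)=\bigl(a^{-1}(z_{\nu+1}-p(z_1)),\ldots,a^{-1}(z_k-p(z_{k-\nu})),\,z_1,\ldots,z_\nu\bigr)
\]
lies in $V_R^-$. Since $K$ is compact and disjoint from $\Gamma_\nu$, the continuous positive function $c(z)=\max\{|z_{\nu+i}-p(z_i)|:1\le i\le k-\nu\}$ is bounded away from $0$ and from $\infty$ on $K$, while the coordinates $z_1,\ldots,z_\nu$ stay uniformly bounded. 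Therefore, for $|a|$ small enough (independent of $z\in K$), the maximum-norm of $w_a(z)$ is realized by one of the first $k-\nu$ coordinates and equals $|a|^{-1}c(z)$. In particular $\|w_a(z)\|\to\infty$ uniformly in $z\in K$ as $a\to0$.

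Now apply the functional equation and the asymptotic from Remark \ref{Step 7}:
\[
d\,G_a^-(z)=G_a^-\bigl(w_a(z)\bigr)=\log\|w_a(z)\|+\varepsilon_a(z)=-\log|a|+\log c(z)+\varepsilon_a(z),
\]
where $\varepsilon_a(z)\to0$ uniformly in $z\in K$ because $\|w_a(z)\|\to\infty$ uniformly. Dividing by $d$ and rearranging yields
\[
G_a^-(z)+\frac{\log|a|}{d}=\frac{1}{d}\log c(z)+\frac{\varepsilon_a(z)}{d},
\]
which converges to $\tfrac1d\log c(z)$ as $a\to0$, giving the claimed identity. The only real point of care is the uniformity of the $o(1)$ error in Remark \ref{Step 7}: one must observe that the norm $\|w_a(z)\|$ blows up uniformly on $K$, which follows from compactness of $K$ and the bound $c(z)\ge C>0$ already used in Lemma \ref{Step 6}; this is the lone step where the compactness hypothesis is genuinely needed.
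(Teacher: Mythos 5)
Your proposal is correct and follows essentially the same route as the paper: use Lemma \ref{Step 6} to push $K$ into $V_R^-$ via $S_a^{-(k-\nu)}$, apply the asymptotic from Remark \ref{Step 7} together with the functional equation $G_a^-\circ S_a^{-(k-\nu)}=d\,G_a^-$, and rearrange. Your added remark that uniformity of the error hinges on $\|w_a(z)\|\to\infty$ uniformly (from $c(z)\ge C>0$ on the compact $K$) is exactly the point the paper leaves implicit.
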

\begin{proof}
From Remark \ref{Step 7}, if $z \in V_R^-$ then $\|z\|_{\infty}=\max\{|z_i|: 1\le i \le k-\nu\}$, i.e., 
\begin{align} \label{(1)}
G_a^-(z)=\log\big(\max\{|z_i|: 1\le i \le k-\nu\}\big)+ o(1)
\end{align}
and $o(1) \to 0$ uniformly as $z \to \infty.$

\medskip\no 
Also from definition of $G_a^-$, it follows that for $z \in \mbb C^k$
\begin{align}\label{(2)}
G_a^-(S_a^{-(k-\nu)}(z))=d G_a^-(z).
\end{align}
Now from Lemma \ref{Step 6}, it follows that $S_a^{-(k-\nu)}(K) \subset V_R^-$ whenever $0<|a|<A_K.$ Thus from (\ref{(1)}) and \ref{(2)} it follows that
\begin{align*}
&G_a^-(z)=\frac{1}{d} \bigg(\log\Big(\max\{|a^{-1}\big(z_{\nu+i}-p(z_i)\big)|: 1\le i \le k-\nu\}\Big)\bigg)+ o(1)\\
&G_a^-(z)+\frac{\log|a|}{d}=\frac{1}{d} \Big\{\log\big(\max\{|z_{\nu+i}-p(z_i)|: 1\le i \le k-\nu\}\big)\Big\}+ o(1)
\end{align*}
Also $o(1) \to 0$ uniformly as $a \to 0$, thus the proof.
\end{proof}
\begin{rem}\label{Step 9}
Thus for $z \in \mbb C^k$, the function $H_a$ defined as
\[ H_a(z)=G_a^-(z)+\frac{\log|a|}{d}\]
converges in $L^1_{\text{loc}}$ to 
\[F(z)=\frac{1}{d} \Big\{\log\big(\max\{|z_{\nu+i}-p(z_i)|: 1\le i \le k-\nu\}\big)\Big\}\]
as $a \to 0.$
\end{rem}
\begin{rem}\label{Step 10}
For $a \in \mbb C^{k-\nu}$(see \cite{Demailly}), 
\begin{align}\label{PL}
(dd^c \log\{\max|z_i-a_i|: 1 \le i \le k-\nu\})^{k-\nu}=(2 \pi)^{k-\nu}\delta_a.
\end{align}
Now consider $\cal{F}: \mbb C^k \to \mbb C^{k-\nu}$ as
\[ \cal{F}\cord=\big(z_{\nu+1}-p(z_1),\hdots,z_{k}-p(z_{k-\nu})\big).\]
From (\ref{PL}), it follows that 
\[(dd^c\log|\cal{F}|)^{k-\nu}=(2\pi)^{k-\nu}[\Gamma_{\nu}],\]i.e.,
\[ (dd^cF)^{k-\nu}=d^{\nu-k}(2\pi)^{k-\nu}[\Gamma_{\nu}].\]
where $[\Gamma_\nu]$ denotes the current of integration on $\Gamma_\nu.$
\end{rem}
\no Recall from Theorem 11 in \cite{BP}, $\mu_a^-=(\frac{1}{2\pi}dd^c G_a^-)^{k-\nu}$ and $\mu_a=(\frac{1}{2\pi}dd^c G_a^+)^\nu.$ Also
\[ \mu_a=\mu_a^+ \wedge \mu_a^-=(\frac{1}{2\pi}dd^c G_a)^{k}\]
where $G_a=\max(G_a^+,G_a^-).$ Also ${S_a}_*^{\nu(k-\nu)}(\mu_a)=\mu_a.$

\medskip\no Let $\Lambda_a$ denote the Lyapunov exponent of $S_a^{\nu(k-\nu)}$ with respect to the measure $\mu_a$, i.e.,
\[ \Lambda_a=\lim_{n \to \infty} \frac{1}{n} \int \|DS_a^{\nu(k-\nu)n}\|d \mu_a.\]
\begin{lem}\label{Lyapunov}
$a \to \Lambda_a$ is a upper semi continuous function.
\end{lem}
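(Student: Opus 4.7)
The plan is the standard strategy for upper semicontinuity of Lyapunov exponents: realise $\Lambda_a$ as a pointwise infimum of continuous functions via Fekete's subadditive lemma. Set $\eta=\nu(k-\nu)$ and
\[ f_n(a):=\frac{1}{n}\int_{\mbb C^k}\log\|DS_a^{\eta n}\|\,d\mu_a. \]

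First I would check subadditivity. The chain rule gives $DS_a^{\eta(n+m)}(z)=DS_a^{\eta n}(S_a^{\eta m}(z))\cdot DS_a^{\eta m}(z)$, so submultiplicativity of the operator norm combined with the invariance ${S_a^\eta}_*\mu_a=\mu_a$ yields
\[ (n+m)\,f_{n+m}(a)\le n\,f_n(a)+m\,f_m(a). \]
Fekete's lemma then gives $\Lambda_a=\inf_{n\ge1} f_n(a)$, so it suffices to prove that each $f_n$ is continuous (or at least upper semicontinuous) in $a$.

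For a fixed $n$, I would verify two points. (i) The integrand $(a,z)\mapsto\log\|DS_a^{\eta n}(z)\|$ is continuous and uniformly bounded on $\{|a|\le A\}\times V_R$ for every $A>0$: continuity is clear since $S_a$ is polynomial in $(a,z)$, while the lower bound comes from the shift structure, as $DS_a$ always contains unit entries (from the block implementing the shift), so the operator norm of any iterate is bounded away from $0$. (ii) The measure $\mu_a$ depends weakly continuously on $a$: the Green functions $G_a^\pm$ are continuous plurisubharmonic functions on $\mbb C^k$ (see \cite{BP}), and they depend continuously on $a$ uniformly on compact subsets of $\mbb C^k$, because the truncated potentials $d^{-n}\log^+\|S_a^{\pm\eta n}\|$ converge to $G_a^\pm$ uniformly in $a$ from any bounded parameter region. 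By the Bedford--Taylor continuity theorem for the Monge--Amp\`ere operator under locally uniform convergence of continuous psh functions, $\mu_a^\pm$ and hence their wedge $\mu_a$ vary weakly continuously with $a$. Since $\mathrm{supp}(\mu_a)\subset V_R$ uniformly in $a$, combining (i) and (ii) gives $f_n(a)\to f_n(a_0)$ as $a\to a_0$.

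Taking the infimum over $n$, $\Lambda_a$ is upper semicontinuous. The main obstacle is step (ii). Away from $a=0$ it is routine, but near $a=0$ the function $G_a^-$ itself blows up by $-\log|a|/d$ (Remark \ref{Step 7}); one must work instead with the renormalization $H_a=G_a^-+\log|a|/d$ from Remark \ref{Step 9}, whose Monge--Amp\`ere equals $(dd^c G_a^-)^{k-\nu}$ and which converges to $F$ in $L^1_{\mathrm{loc}}$, in fact locally uniformly off $\Gamma_\nu$ by Lemma \ref{Step 8}. Combined with Lemma \ref{Step 5}, this delivers the weak continuity of $\mu_a$ at $a=0$ needed to conclude upper semicontinuity on a full neighbourhood of the origin as well.
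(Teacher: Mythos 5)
Your argument follows essentially the same route as the paper's proof (which in turn cites Lemma 5.1 of \cite{BS3}): use subadditivity from the chain rule and invariance of $\mu_a$, then present $\Lambda_a$ as the infimum (equivalently, a decreasing dyadic limit) of the normalized integrals $f_n(a)$ and invoke upper semicontinuity of an infimum of continuous functions. The one thing you add beyond the paper's one-line write-up is an actual justification that each $f_n$ is continuous in $a$, via weak-$*$ continuity of $a\mapsto\mu_a$ (Bedford--Taylor convergence of Monge--Amp\`ere, with the renormalization $H_a$ handling the blow-up of $G_a^-$ near $a=0$); this is the substantive step the paper leaves implicit, and your treatment of it is sound. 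Also note that the paper's displayed formula omits the $\log$ in the integrand, which you correctly restore.
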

\begin{proof}
The proof is similar to Lemma 5.1 in \cite{BS3}. For a fixed $a \in \mbb C$, let $$\Lambda_a^n=\frac{1}{n} \int \|DS_a^{\nu(k-\nu)n}\|d \mu_a.$$
Now as ${S_a}_*^{\nu(k-\nu)}(\mu_a)=\mu_a$ it follows that 
\[ (m+n)\Lambda_a^{n+m} \le n \Lambda_a^n+m\Lambda_a^m.\]
From Theorem 4.9 in \cite{WaltersBook}, $\Gamma_a$ exists. Now for $n \ge 1$ consider the sequence $f_n(a)=\Lambda_a^{2^n}.$ Then $f_{n+1}(a) \le f_n(a)$, i.e., $\Lambda_a$ is a limit of a decreasing sequence of continuous function, thus $a \to \Lambda_a$ is upper semi continuous.
\end{proof}
\begin{proof}[Proof of Theorem \ref{degeneration}]
Note that $dd^c H_a=dd^c G_a^-$ where $H_a$ is as defined in Remark \ref{Step 9}. From Remark \ref{Step 10}, $(dd^c H_a)^{k-\nu}$ converges to $d^{\nu-k}[\Gamma_{\nu}]$ in the sense of currents as $a \to 0.$
Hence from Lemma \ref{Step 5}
\begin{align*}
\lim_{a \to 0}\mu_a&=\lim_{a \to 0}\mu_a^+ \wedge \lim_{a \to 0} \mu_a^- \\
&=\lim_{a \to 0} (\frac{1}{2\pi}dd^c G_a^+)^\nu \wedge \lim_{a \to 0} (\frac{1}{2\pi}dd^c G_a^-)^{k-\nu}\\
&=\lim_{a \to 0} (\frac{1}{2\pi}dd^c G_a^+)^\nu  \wedge d^{\nu-k}[\Gamma_{\nu}].\\
&=d^{\nu-k}(\frac{1}{2\pi}dd^c {G_0^+}_{|\Gamma_\nu})^\nu=\pi_*(\mu_{p_\nu}).
\end{align*}
Now the Lyapunov exponent for $p_\nu$ with respect to the measure $\mu_{p_\nu}$ is given by 
\[\Lambda_{p_\nu}=\lim_{n \to \infty} \frac{1}{n} \int \|Dp_\nu^{n}\|d \mu_{p_\nu}.\]
Since Lyapunov exponents are conjugacy invariant and $a \to \Lambda_a$ is upper semi continuous
\begin{align*}
(k-\nu)\Lambda_{p_\nu}&=\lim_{n \to \infty} \frac{k-\nu}{n} \int \|Dp_\nu^{n}\|d \mu_{p_\nu} \\
&=\lim_{n \to \infty} \frac{1}{n} \int \|Dp_\nu^{(k-\nu)n}\|d \mu_{p_\nu}\\
&=\lim_{n \to \infty} \frac{1}{n} \int \|DS_0^{\nu(k-\nu)n}\|d \mu_0\\
&=\limsup_{a \to 0} \Lambda_a
\end{align*} 
\end{proof}
\section{Hyperbolic polynomials in one variable}
\no In this section, we will recall a few properties of hyperbolic polynomials in one variable (see \cite{McMullen}). For a rational function $f$ in $\hat{\mbb C}$, let  $J_f$ and $K_f$ be the usual Julia set  and filled Julia set of $f$ respectively. Also, let $C_f$ denote the critical points of $f $ in $\hat{\mbb C}$.
\begin{defn} 
A rational function $f$ is said to be hyperbolic if the post critical set of $f$ does not intersect the Julia set of $f$, i.e.,
\[P_c (f) \cap J_f=\emptyset\] where $$P_c(f)=\ov{\bigcup_{n=0}^{\infty} f^{ n}(C_f)}.$$
\end{defn}
\no Let us recall a few properties of a hyperbolic polynomial (say $p$) of $\mbb C$ which will be essential for our work. Also we will assume that $p$ has a connected Julia set, i.e., the Fatou component at infinity is simply connected.
\begin{enumerate}
\item  Let $J_p$ be the Julia set for a hyperbolic polynomial. Then $J_p$ is compact and there exists a neighbourhood of $J_p$, i.e., $V \supset J_p$ such that $V$ admits a conformal metric for which $p$ is expanding.

\medskip
\item There exists a constant depending on $p$, i.e., $\delta(p)>0$ such that $$U_{\delta(p)} \subset\subset p(U)\text{ and }p^{-1}(U)_{\delta(p)} \subset \subset U,$$ where $U_{\delta(p)}$ and $p^{-1}(U)_{\delta(p)}$ are $\delta(p)-$neighbourhoods of $U$ and $p^{-1}(U).$

\medskip 
\item Now let $U$ be a slightly smaller neighbourhood of $J_p$ contained in $V.$ The complement of $U$, i.e., $U^c=U_c \cup U_\infty$ where $U_c$ denote all the compact components of $U^c$ and $U_\infty$ the unbounded component containing infinity. Then number of components in $U_c$ is finite and $p(U_c) \subset U_c$.

\medskip
\item Note that $U_c$ may be empty as well. If non empty, then the components of $U_c$ are simply connected domains, compactly contained in the basin of attraction of periodic points of $p.$ Thus with respect to the hyperbolic metric (say $\mbb H$) on the basin of attraction of periodic points of $p$, there exists a constant $0<c<1$, such that for $v \in T_z{U_c}$, $z \in U_c$
\[ \|p'(z)v\|_{\mbb H}^{p(z)}<c\|z\|_{\mbb H}^z.\]

\medskip
\item The component $U_{\infty}$ is simply connected and there exists a conformal map $\rho$,  $$\rho:\hat{\mbb C}\setminus \bar{\Delta}\to\hat{\mbb{C}} \setminus K_p$$ such that $\rho(z^d)=p \circ \rho(z)$ where $\bar{\Delta}$ is the unit disc and $d \ge 2$ is the degree of the polynomial. 

\medskip
\end{enumerate}

\begin{prop}\label{U-infinity}
Suppose $p$ is a hyperbolic polynomial with connected Julia set then there exists $\eta(p)>0$ such that if $\{w_n\} \subset  D(0;\eta(p))$ and $z_0 \in U_\infty$ the sequence $\{z_n\}$ defined as 
\[ z_{n}=p(z_{n-1})+w_n\] diverges uniformly to infinity.
\end{prop}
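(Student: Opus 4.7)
The strategy is a two-phase argument: first ensure that after a bounded number of iterations the perturbed orbit enters an outer region where $p$ is strongly expanding, and then show that small perturbations cannot prevent divergence in that region. Choose an escape radius $R>0$ such that $|p(z)| \ge 3|z|$ whenever $|z| \ge R$, which exists because $p$ is a polynomial of degree $d \ge 2$. Enlarge it to $M>R$ so that $\{|z|>M\} \subset U_\infty$; this is possible because $U_\infty$ is a neighbourhood of infinity. The set $K := U_\infty \cap \ov{D(0,M)}$ is compact and contained in $F_\infty(p)$, so the unperturbed iterates $p^n$ tend to infinity uniformly on $K$, and we can pick $N \ge 1$ with $|p^N(z)| > 2M$ for every $z \in K$.

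During the first $N$ steps, the unperturbed trajectories $\{p^n(z_0): z_0 \in K,\; 0 \le n \le N\}$ lie in some large ball $B(0,C_1)$. Let $L$ denote the Lipschitz constant of $p$ on $B(0,C_1+1)$. For a perturbed sequence with $|w_n|<\eta$, the standard Gr\"onwall-type induction $|z_n - p^n(z_0)| \le L|z_{n-1} - p^{n-1}(z_0)|+\eta$ gives $|z_n - p^n(z_0)| \le \eta (L^n-1)/(L-1)$, valid as long as $z_n$ stays in $B(0,C_1+1)$. Picking $\eta$ small enough that $\eta(L^N-1)/(L-1) < \min(1,M)$ simultaneously keeps the perturbed orbit inside the Lipschitz ball for $0 \le n \le N$ and forces $|z_N| > 2M - M = M$, so the orbit has entered the escape region $\{|z| \ge R\}$.

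Once $|z_n| \ge M > R$, the expansion estimate $|z_{n+1}| \ge |p(z_n)|-|w_{n+1}| \ge 3|z_n|-\eta \ge 2|z_n|$ holds provided $\eta \le R$, so $|z_n| \ge 2^{n-N} M \to \infty$, with this lower bound independent of $z_0$ and of the perturbation sequence, which yields the required uniform divergence. For $z_0 \in U_\infty \setminus K$ we already have $|z_0| > M$, so this last phase applies from $n=0$ onward. Setting $\eta(p):=\min(\eta,R)$ finishes the argument. The main obstacle is the finite-time perturbation estimate: $p$ has no global Lipschitz constant, so the same smallness of $\eta$ must be used both to guarantee that the perturbed orbit cannot escape $B(0,C_1+1)$ during the first $N$ steps and to ensure $|z_N|>M$, which is why the two bounds are coupled in the choice of $\eta$.
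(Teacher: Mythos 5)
Your proof is correct, but it takes a genuinely different route from the paper. The paper conjugates the dynamics on $U_\infty$ to the model map $z \mapsto z^d$ via the B\"ottcher coordinate $\rho: \hat{\mbb C}\setminus\bar\Delta \to \hat{\mbb C}\setminus K_p$, runs the perturbed escape argument entirely in the model (where $|w|^d - r > 1+d\ep$ gives a clean geometric escape), and then transfers the estimate back using uniform continuity of $\rho^{-1}$ on a compact annulus. This is where the hypotheses ``hyperbolic'' and ``connected Julia set'' are used --- the global B\"ottcher conjugacy on all of $F_\infty(p)$ needs connectedness. Your argument instead splits $U_\infty$ into the compact piece $K = U_\infty\cap\ov{D(0,M)}$ and the escape region $\{|z|>M\}$, uses uniform convergence of $p^n$ to $\infty$ on $K$ to fix a finite horizon $N$, and controls the deviation of the perturbed orbit from the unperturbed one over those $N$ steps with a discrete Gr\"onwall bound governed by a local Lipschitz constant; after step $N$ the explicit expansion $|p(z)|\ge 3|z|$ for $|z|\ge R$ takes over. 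This is more elementary --- it needs no uniformization and in fact uses nothing about $p$ beyond $\deg p\ge 2$ and the inclusion $U_\infty\subset F_\infty(p)$, so it would work verbatim without the hyperbolicity or connectedness assumptions. The price is that the two smallness conditions on $\eta$ are coupled through $N$ and the local Lipschitz constant $L$, so the bound on $\eta(p)$ obtained this way is less geometrically explicit than the paper's, which is phrased directly in terms of the B\"ottcher coordinate of $K$. One cosmetic point: the closed form $\eta(L^N-1)/(L-1)$ implicitly assumes $L\ne 1$; since $L$ could in principle be close to $1$, it is cleaner to replace $L$ by $\max(L,2)$ (the Lipschitz inequality still holds with a larger constant), which does not affect anything else.
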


\begin{proof}
Our aim is to show that for a given $R>0$, sufficiently large $|z_n|>R$ for every $n \ge n_0 \ge 1.$ Further, the existence of $n_0$ depends only on $R$ and is independent of the choice $z_0 \in U_{\infty}$ or the sequence $\{w_n\}.$

\medskip\no 
Given a polynomial $p$ there exists $R_p>0$ such that for $|z|\ge R_p$, $|p(z)|-|z|>\ep_p>0$, since infinity is a super attracting fixed point for $p.$ Hence if $z_0 \in U_\infty \cap D(0;R_p)^c$ then for every sequence $\{w_n\} \in D(0; \ep_p)$, $|z_n| >R_p+n \ep_p> R$ for $n \ge n_0.$

\medskip\no 
Without loss of generality let us assume that $K=\ov{U}_{\infty} \cap \ov{D(0;R_p)}$ be a non--empty compact subset of $\mbb C\setminus K_p.$ By Property (4) of hyperbolic polynomials $K_1=\rho^{-1}(K)$ is a compact subset of $\mbb C \setminus \bar{\Delta}.$

\medskip\no 
For any $w \in K_1$, then $|w|>1+\ep$ for some $\ep>0.$ Choose $r_1>0$ such that $|w|^d-r_1>1+d \ep$ for every $w \in K_1.$ Let $$M_1=\min\{|w|^d-r_1:w \in K_1\}.$$ Then $M_1> 1+d \ep.$ Similarly one can choose $r_2$ such that $M_1^d-r_2> 1+d^2 \ep.$ Let $M_2=M_1^d-r_2.$ Inductively one can choose $r_n>0$ for every $n \ge 1$ such that $$M_n>1+d^n \ep$$ where $M_n=M_{n-1}^d-r_n$. Thus $M_n \to \infty$ as $n \to \infty$, i.e., there exists $n_0 \ge 0$ such that if $\{\tilde{w}_n\}$ is a sequence from $D(0;\tilde{r})$ where $\tilde{r}=\min\{r_1: 1\le i \le n_0\}$ and $\tilde{z}_0 \in K_1$, $(n_0+1)-$th element in the  sequence $\{\tilde{z}_n\}$ defined as 
\[ \tilde{z}_{n}=\tilde{z}_{n-1}^d+\tilde{w}_n\] leaves $K_1.$ Note that this choice of $n_0$ is independent of the choice of base point $\tilde{z}_0 \in K_1$ or the sequence $\{w_n\}$ in $D(0; \tilde{r}).$

\medskip\no 
Let $m=\min\{|z|: z \in K_1\}$ and $M=\max\{|z|: z \in K_1\}$. Then $K_1 \subset \ov{\cal{A}(0;m;M)} \subset \mbb C \setminus \bar{\Delta}$ (the closed annulus with inner radius $m$ and outer radius $M$). Let $K_{\cal{A}}=\rho(\ov{\cal{A}(0;m;M)}).$ By continuity for $\tilde{r}>0$ (as chosen above) there exists $r_0>0$ such that 
\[ |\rho^{-1}(z)-\rho^{-1}(w)|< \tilde{r} \text{ whenever } |z-w|< r_0\]
and $z,w \in K_{\cal{A}}.$
Let $\eta(p)=\min\{r_0,\ep_p\}.$ Further, let $\{w_n\}$ be any sequence from  $D(0; \eta(p))$ and $z_0$ be an arbitrary point in $K.$ By definition
\begin{align*}
z_1=p(z_0)+w_1 
\end{align*}
i.e.,  \[|z_1-\rho \circ (\rho^{-1}(z_0))^d|=|w_1|< r_0.\]
Hence $\rho^{-1}(z_1)-(\rho^{-1}(z_0))^d=\tilde{w}_1$ where $\tilde{w}_1 \in D(0; \tilde{r}).$ 

\medskip\no Let $\tilde{z}_n=\rho^{-1}(z_n)$ for every $n \ge 0.$ 

\medskip\no 
\textit{Case 1: }If $|\tilde{z_1}|>M$, then $|z_1|>R_p$ and $z_n=p(z_{n-1})+w_n$ lies in $D(0;R_p)^c$ for every $n \ge 2$ and $z_n \to \infty$ as $n \to \infty.$

\medskip\no 
\textit{Case 2: }If $|\tilde{z_1}|< M$ then $z_1 \in K_{\cal{A}}.$ By a similar argument as before it follows that
\[ \tilde{z}_2=\tilde{z}_1^d+\tilde{w}_2 \text{ where } \tilde{w}_2 \in D(0; \tilde{r}).\]
Thus again we do the same analysis for $\tilde{z}_2$ and continuing inductively we have that there exists $1 \le N_0 \le n_0$ such that  $|\tilde{z}_{N_0+1}|> M.$ Now by \textit{Case 1}, $z_n \to \infty$ as $n \to \infty.$
\end{proof}

\section{Proof of Theorem \ref{main theorem 1} and \ref{main theorem 2} for \textit{k} = 3}
\no Let $p$ be hyperbolic polynomial with connected Julia set. Let $V$ be the neighbourhood of $J_p$ that admits a conformal metric $\rho$, which is expanding under $p$ and $U$ be (as before) a neighbourhood of $J_p$ and compactly contained in $V$ such that there exists $\delta>0$ for which $U_{\delta} \subset p(U) $ and $p^{-1}(U)_{\delta} \subset U.$ Let $U_c$ and $U_\infty$ denote the the compact components and the component containing infinity in the complement of $U$ and let $U_0=\bar{U}$. Further, let $A$ be sufficiently small such that $AR< \delta_0$, where $\delta_0< \max\{\delta,\eta(p)\} $ where $\eta(p)$ as in Proposition \ref{U-infinity} and $R>0$ be the radius of filtration whenever $|a|<1.$
\subsection*{Proof of Theorem \ref{main theorem 1}}
Let $S_a$ denote a type $1-$shift map of this hyperbolic polynomial in $\mbb C^3$ for $a \in \mbb C$, i.e.,
\[ S_a(z_1,z_2,z_3)=(z_2,z_3,az_1+p(z_3)).\]
\begin{lem}\label{size of Julia}
For $0<|a|<A$, the Julia set $J_a \subset D_R \times D_R \times U_0.$
\end{lem}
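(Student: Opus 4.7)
My plan is to pass from $S_a$ on $\mathbb C^3$ to the one-dimensional perturbed iteration of $p$ along the third coordinate of the orbit. Since $J_a\subset V_R$ already forces $z_1,z_2\in D_R$, the entire content of the lemma is $z_3\in U_0 = \bar U$.

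Write $z^{(n)} := S_a^n(z)$ and $u_n := z_3^{(n)}$. The shift structure of $S_a$ gives $z_1^{(n-1)}=z_2^{(n-2)}=z_3^{(n-3)}$, which together with the defining formula of $S_a$ yields the two-sided recursion
\[
u_n = p(u_{n-1}) + a\, u_{n-3}\qquad(n\in\mathbb Z),
\]
where negative indices are permissible because $J_a$ is completely $S_a$-invariant. Since the whole two-sided orbit lies in $V_R$, we have $|u_m|\le R$ for every $m$, so the effective perturbation $w_n := a u_{n-3}$ satisfies $|w_n|\le |a|R < AR < \delta_0 \le \eta(p)$, where $\eta(p)$ is the constant from Proposition \ref{U-infinity}. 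Thus $\{u_n\}$ is a bounded solution of the perturbed iteration $u_n = p(u_{n-1})+w_n$ whose perturbations are uniformly below the Proposition \ref{U-infinity} threshold.

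Assume, for contradiction, that $u_0 = z_3\notin\bar U$. Then $u_0$ lies either in the unbounded component $U_\infty$ or in some bounded component comprising $U_c$. The first case is immediate: Proposition \ref{U-infinity} applies verbatim and forces $u_n\to\infty$, contradicting $|u_n|\le R$. In the second case $u_0 \in U_c$ sits compactly inside the basin $F$ of an attracting periodic cycle $\{q_1,\ldots,q_m\}$ of $p$, on which $p$ is strictly contracting in the hyperbolic metric by property (4). I would shrink $A$ so that $\delta_0 < \operatorname{dist}(p(U_c),\partial U_c)$; this ensures the perturbed iterates $u_n$ remain in $U_c$ for every $n\ge 1$, and a telescoping estimate using $c$-contractivity in the hyperbolic metric together with the bound $|w_n|\le |a|R$ then yields $u_n$ within distance $O(|a|)$ of the cycle for all large $n$. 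The same argument, with $z$ replaced by any point in a small neighborhood, shows that the third-coordinate orbit of every nearby point stays bounded; via $z^{(n)} = (u_{n-2},u_{n-1},u_n)$ for $n\ge 2$ the full orbit is bounded as well. Hence a whole neighborhood of $z$ lies in $K_a^+$, so $z\in\mathrm{int}(K_a^+)$, contradicting $z\in J_a^+ = \partial K_a^+$.

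The chief obstacle is the $U_c$ case, where $A$ must be quantified in terms of $p$'s geometric data (the distance $\operatorname{dist}(p(U_c),\partial U_c)$ and the hyperbolic-contraction constant $c$) to guarantee that the perturbation cannot eject the orbit from the basin before contractivity takes hold. When $U_c$ is empty, this case is vacuous and only the first case is needed.
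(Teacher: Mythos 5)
Your proposal is correct and takes essentially the same approach as the paper: reduce the behaviour of the third coordinate to the perturbed one-variable iteration $u_n = p(u_{n-1}) + a\,u_{n-3}$ with $|a\,u_{n-3}|\le |a|R<\delta_0$, and split into the cases $z_3\in U_\infty$ (Proposition \ref{U-infinity} gives divergence, so $z\in U_a^+$) and $z_3\in U_c$ (the perturbed orbit stays trapped in $U_c$, so $z\in \mathrm{int}(K_a^+)$), either way contradicting $z\in J_a^+=\partial K_a^+$. The paper expresses the same mechanism more tersely by observing that $S_a^2(z)$ lands in $U_\infty\times U_\infty\times U_\infty$ or $U_c\times U_c\times U_c$ and then the orbit is uniformly divergent or uniformly bounded on a neighbourhood of $z$.
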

\begin{proof}
Note that for $z \in \mbb C^3 \cap V$ if $\pi_3(z)=z_3 \in U_c$ or $U_\infty$, then for sufficiently small choice of $|a|$, $$S_a^2(z) \in U_c \times U_c \times U_c \text{ or }U_{\infty} \times U_{\infty} \times U_{\infty}$$ by Lemma \ref{U-infinity}. Hence $S_a^{n}(z)$ is either uniformly bounded or diverges to infinity uniformly on a neighbourhood of $z$, i.e., $z \notin J_a^+.$ Thus the proof.
\end{proof}
\no When $a=0$, i.e., $S_0(z_1,z_2,z_3)=(z_2,z_3,p(z_3))$ and 
\[ DS_0^2(z_1,z_2,z_3)= \begin{pmatrix}
0 &0 &1 \\ 0 &0 &p'(z_3) \\ 0 &0 &(p^2)'(z_3)
\end{pmatrix}.\] Let the eigenvalues of $DS_0^2(z)$ be denoted by $\la_i^z(0)=0, 0\text{ and }p'(z_3)$ for $i=1,2,3$ and
\[ E_i^z(0)=\text{The eigenspace corresponding to the eigenvalue }\la_i^z(0) .\]
Then the eigenspaces $E_i^z(0)$ are as follows:
\begin{align*}
&E_1^z(0)=\{(t,0,0): t \in \mbb C\}, \\
&E_2^z(0)=\{(0,t,0): t \in \mbb C\}, \\
&E_3^z(0)=\{\big(t,tp'(z_3),t(p^2)'(z_3)\big): t \in \mbb C\}.
\end{align*}
Let $\Gamma$ denote the following graphs in $\mbb C^3$
\[ \Gamma=\{(z,p(z),p^{2}(z)): z \in \mbb C\}.\]
Observe that $S_0^{2}(\mbb C^3)=\Gamma$, i.e., $S_0^{2}(z_1,z_2,z_3)=(z_3,p(z_3),p^2(z_3)).$ Let the parametrization of $\Gamma$ be denoted by $\phi$, i.e., $\phi(z)=(z,p(z),p^{2}(z))$. Then for $v \in T_{\phi(z)} \Gamma$ means $v=(t,p'(z)t,(p^{2})'(z)t)$ for some $t \in \mbb C.$

\medskip\no 
\textit{Claim: }$E_3^z(0)=T_{S_0^{2}(z)} \Gamma$ and 
$DS_0^{2}(z)\big(E_3^z(0)\big)=E^{S_0^{2}(z)}_3(0).$

\medskip\no 
If $v \in E_3^z(0)$ for some $z \in \mbb C^3$ then $DS_0^{2}(z)v=(p^2)'(z_3)v$. Calculating coordinate wise it is directly obtained that $v=(t,p'(z_3)t,(p^{2})'(z_3)t)$ for some $t \in \mbb C.$ Hence, the first part of the claim is true.

\medskip\no 
Since $S_0^{2}(z)=(z_3,p(z_3),p^2(z_3))$, $E^{S_0^{2}(z)}_3(0)=T_{S_0^{4}(z)}\Gamma.$ But $S_0^{2}$ on the graph is equivalent to the map $p^2$ in $\mbb C$, i.e., the diagram is as follows
\[
\begin{tikzcd}
\mbb C^3 \arrow{r}{S_0^{2}} \arrow[swap]{d}{S_0^{2}} & \mbb C^3\arrow{d}{S_0^{2}} \\
\Gamma \arrow{r}{S_0^{2}}\arrow[swap]{d}{\phi} & \Gamma \arrow{d}{\phi}\\
\mbb C \arrow{r}{p^{2}} &\mbb C
\end{tikzcd}.
\]
Thus $D(E_3^z(0))=E_3^{S_0^{2}}(z)(0).$
%
\begin{proof}[Proof of Theorem \ref{main theorem 1} for $k=3$]
\no From Lemma \ref{size of Julia}, $J_a \subset D_R' \times D_R' \times V$ where $R'>R$ and $V$ is the neighbourhood of $J_p$ that compactly contains $U.$ Let $v=(t,tp'(z_3),t(p^2)'(z_3)) \in E_3^z(0)$. We consider the identification, $\phi_3(v)=t \in T_{z_3}V. $ Now $V$ admits a hyperbolic metric $\rho$ such that $p^2$ is expanding. Define $\|v\|_{\eta}=\|\phi_3(v)\|_{\rho}$, i.e., there exists a $\la>1$ such that
\begin{align}\label{expandingequation1}
\|DS_0^2(z)v\|^{S_0^2(z)}_{\eta}=\|(p^2)'(z_3)\phi_3(v)\|^{p^2(z_3)}_{\rho}> \la \|\phi_3(v)\|_{\eta}^{z_3}=\la \|v\|^{z}_{\eta}. 
\end{align}
For $z \in D_{R'} \times D_{R'} \times V$, and $v=(v_1,v_2,v_3)\in E_1^z(0) \oplus E_2^z(0) \oplus E_3^z(0) = T_z (D_{R'} \times D_{R'} \times V)$ we define the metric as:
\[ \|v\|_{\varrho_0}^z=\|v_1\|^z_{\mbb E}+\|v_2\|_{\mbb E}^z + \|v_3\|_{\eta}^{z}.\]
Let $E_u^z(a)= E_3^z(a)$ and $E_s^z(a)=E_1^z(a) \oplus E_2^z(a).$ From invariance of $E_s^z(0)$ and $E_u^z(0)$ and (\ref{expandingequation1}) it follows that there exists $\la_0>1$ such that:
\begin{align}\label{lambda_final_basic}
& \|DS_0^{2}(z)v^u\|_{\varrho_0}^{S_0^{2}(z)}> \la_0 \|v^u\|_{\varrho_0}^z\\
& \|DS_0^{2}(z)v^s\|_{\varrho_0}^{S_0^{2}(z)}< \la_0^{-1} \|v^s\|_{\varrho_0}^z .
\end{align}
Note that there exists a linear isometry for every point $z \in D_{R'} \times D_{R'} \times V$ as below
\[ \phi_{a,z}: T_z (D_{R'} \times D_{R'} \times V) \to T_z (D_{R'} \times D_{R'} \times V) \text{ and } \phi_{a,z}(v_1^0)=v_1^a, \phi_{a,z}(v_2^0)=v_2^a \]
where $(v_1^a,v_2^a) \in E_z^s(a)\oplus E_z^u(a). $ Also $\phi_{a,z}$ depends continuously on $a$ and $z$, hence for every $0 \le |a|< A$ there exists a conformal metric on $D_{R'} \times D_{R'} \times V$, $v \in E_z^s(a)\oplus E_z^u(a)$
\[ \|(v_1,v_2)\|_{\varrho_a}^z=\|\phi_{a,z}^{-1}(v_1,v_2)\|_{\varrho_0}.\]
For $\rho>0$ recall the definition of cones of $T_z(D_{R'} \times D_{R'} \times V) $ 
\begin{align*}
 C^s_z(\rho,a)&=\{(v_1^a,v_2^a) \in E_z^s(a)\oplus E_z^u(a): \|v_2^a\|_{\varrho_a}^z < \rho \|v_1^a\|_{\varrho_a}\} \text{ and }
\\ C^u_z(\rho,a)&=\{(v_1^a,v_2^a) \in E_z^s(a)\oplus E_z^u(a): \|v_1^a\|_{\varrho_a}^z < \rho \|v_2^a\|_{\varrho_a}\}.
\end{align*}
From (\ref{lambda_final_basic}), it follows that for sufficiently small $\rho>0$ there exists $\la_1>1$ such that for $z \in D_{R'} \times D_{R'} \times V$, $v \in C^u_z(\rho,0)$
\[ \|DS_0^{2}(z)v\|_{\varrho_0}^{S_0^{2}(z)}> \la_1 \|v\|_{\varrho_0}^z\]
and $v \in C^s_z(\rho,0)$
\[ \|DS_0^{2}(z)v\|_{\varrho_0}^{S_0^{2}(z)}< \la_1^{-1} \|v\|_{\varrho_0}^z .\]
Hence by continuity of splitting of the tangent spaces on $a$, the choice of $A$ is further modified, such that there exists $\rho_0>0$ with $C^s_z(\rho_0,a) \subset C^s_z(\rho,0)$ and $C^u_z(\rho_0,a) \subset C^u_z(\rho,0)$ for every $z \in \ov{D_{R'} \times D_{R'} \times V}$ whenever $0 \le |a|< A$, i.e., for
$z \in \ov{D_{R'} \times D_{R'} \times V}$, $v \in C^u_z(\rho_0,a)$
\[ \|DS_0^{2}(z)v\|_{\varrho_0}^{S_0^{2}(z)}> \la_1 \|v\|_{\varrho_0}^z\]
and $v \in C^s_z(\rho_0,a)$
\[ \|DS_0^{2}(z)v\|_{\varrho_0}^{S_0^{2}(z)}< \la_1^{-1} \|v\|_{\varrho_0}^z .\]
Again using the continuity of the metric on $a$, there exist modified $A$, $\rho_1$ (say $\rho_0 \ge \rho_1>0$) and $\la_1$ (say $\la_1>1$) such that 
 whenever $0< |a|< A$ for
$z \in \ov{D_{R'} \times D_{R'} \times V}$, $v \in C^u_z(\rho_1,a)$
\begin{align}\label{final3_hyp_1}
 \|DS_a^{2}(z)v\|_{\varrho_a}^{S_a^{2}(z)}> \la_1 \|v\|_{\varrho_a}^z
 \end{align}
and $v \in C^s_z(\rho_1,a)$
\begin{align}\label{final3_hyp_2}
 \|DS_a^{2}(z)v\|_{\varrho_a}^{S_a^{2}(z)}< \la_1^{-1} \|v\|_{\varrho_a}^z
\end{align}
Since $S_a$ is an automorphism further modifying the choice of $\rho_1$ and $A$, i.e., for $0<|a|<A$, equation (\ref{final3_hyp_1}) and (\ref{final3_hyp_2}) can be summarized as:
\begin{itemize}
\item[(i)] There exists a Riemannian metric $\|\cdot \|_{\varrho_a}$ on ${D_{R'} \times D_{R'} \times V} \subset \mbb C^3$ such that $$DS_a^{2}(z)(C^s_z(\rho_1,a)) \subset \text{int}\; C^s_{S_a^{2}(z)}(\rho_1,a) $$ and $$DS_a^{-2}(z)\big(C^u_{S_a^{2}(z)}(\rho_1,a)\big) \subset \text{int}\; C^u_{z}(\rho_1,a).$$

\medskip 
\item[(ii)] There exists $\la_1>1$ such that for every $z \in J_a $
\[ \|DS_a^{-2}(z)(v)\|_{\varrho_a}^{S_a^{-2}(z)} \ge \la_1 \|v\|_{\varrho_a}^z \text{ for } v\in C^s_{S_a^2(z)}(\rho_1,a)\]
and
\[\|DS_a^{2}(z)(v)\|_{\varrho_a}^{S_a^{2}(z)} \ge \la_1 \|v\|_{\varrho_a}^z \text{ for } v\in C^u_z(\rho_1,a).\]
\end{itemize}
Now by Corollary 6.4.8 in \cite{KatokBook} $S_a^2$ is hyperbolic on $J_a.$

\medskip\no 
Since the topology induced in $V$ by the conformal metric $\rho$ is equivalent to the Euclidean metric, the metric $\varrho_a$ induce the same topology as the general Euclidean metric on $D_{R'} \times D_{R'} \times V.$ Thus the proof.

\end{proof}
\subsection*{Proof of Theorem \ref{main theorem 2}}Let $S_a$ denote a type $2-$shift map of the hyperbolic polynomial in $\mbb C^3$, i.e., for $a \in \mbb C^*$
\[ S_a(z_1,z_2,z_3)=(z_2,z_3,az_1+p(z_2)).\]
\begin{lem}\label{lemma 1}
For $0<|a|<A$, the Julia set $J_a$ of $S_a$ is contained in the union of the following sets, i.e., 
\[ J_a \subset ({D_R} \times U_0 \times U_c )\cup (D_R \times U_c \times U_0 ) \cup (D_R \times U_0 \times U_0)\]
where $D_R$ is the disc of radius $R$ at the origin in $\mbb C$ and $R$ is the radius of filtration for $|a|<1.$
\end{lem}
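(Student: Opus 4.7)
The plan is to rule out, for $z=(z_1,z_2,z_3)\in J_a$, the configurations in which $z_2$ or $z_3$ lies in $U_\infty$, and the one in which both $z_2$ and $z_3$ lie in $U_c$; since $z_1\in D_R$ is automatic from $J_a\subset V_R$ and $\mbb C=U_0\cup U_c\cup U_\infty$, this yields the stated containment. In each excluded case it suffices to show that a full neighbourhood of $z$ lies either in the open escape set $U_a^+$ or in $\mathrm{int}\,K_a^+$, since $J_a\subset J_a^+=\partial K_a^+$.

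To analyse the forward orbit, I would write $S_a^n(z)=(u_n,u_{n+1},u_{n+2})$; the definition of $S_a$ gives the three-term recursion $u_{n+3}=a\,u_n+p(u_{n+1})$ with $u_0=z_1,\;u_1=z_2,\;u_2=z_3$. Splitting by parity into $a_k:=u_{2k+1}$ (so $a_0=z_2$) and $b_k:=u_{2k}$ (so $b_0=z_1,\;b_1=z_3$) gives the coupled recurrences
\begin{align*}
a_{k+1}&=p(a_k)+a\,b_k,\\
b_{k+1}&=p(b_k)+a\,a_{k-1},
\end{align*}
and $S_a^{2k}(z)=(b_k,a_k,b_{k+1})$. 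So long as the orbit remains in $V_R$ one has $|a_k|,|b_k|\le R$, and hence both perturbation terms are bounded by $|a|R<\delta_0\le\eta(p)$ by the choice $AR<\delta_0$ fixed at the beginning of the section.

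If $z_2\in U_\infty$ (resp.\ $z_3\in U_\infty$), I apply Proposition \ref{U-infinity} to the subsequence $(a_k)$ (resp.\ $(b_k)$) with these bounded perturbations: the subsequence must diverge to infinity, so the orbit cannot stay in $V_R$, and by filtration property (3) of Section 2 it then enters $V_R^+$ and escapes. The escape time in Proposition \ref{U-infinity} is uniform on the compact set $U_\infty\cap\overline{D_R}$, so the same conclusion holds on a neighbourhood of $z$, placing $z$ in $U_a^+$. For the remaining case $z_2,z_3\in U_c$, properties (3) and (4) of hyperbolic polynomials from Section 4 give $p(U_c)\subset U_c$ and uniform contraction of $p$ on $U_c$ in the hyperbolic metric, while property (2) provides the margin $\delta$. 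For $|a|$ small enough the coupled map above maps $U_c\times U_c$ into itself and is a contraction in the product hyperbolic metric, so both $(a_k)$ and $(b_k)$ converge to perturbations of attracting periodic cycles of $p$ and $S_a^{2k}(z)$ converges to an attracting periodic orbit of $S_a^2$. The same conclusion holds uniformly on a neighbourhood of $z$, so $z\in\mathrm{int}\,K_a^+$.

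The main technical hurdle is the $U_c\times U_c$ case: verifying that the coupled non-product perturbed iteration preserves $U_c\times U_c$ and remains contracting for small $|a|$, so that the attracting cycles of $p$ lift to attracting periodic orbits of $S_a^2$ whose basins cover a neighbourhood of $z$. The mixed configurations pairing $U_\infty$ with $U_0$ or $U_c$ are handled by the same argument as the pure $U_\infty$ case, since a single coordinate in $U_\infty$ already forces escape via Proposition \ref{U-infinity}.
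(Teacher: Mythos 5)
Your proof is correct and follows essentially the same strategy as the paper: rule out $\pi_2(z)\in U_\infty$ or $\pi_3(z)\in U_\infty$ by applying Proposition~\ref{U-infinity} to the parity subsequences $a_k=u_{2k+1}$, $b_k=u_{2k}$ (the paper invokes this same mechanism, though more tersely, after noting $\pi_3S_a^2(z)=p(z_3)+az_2\in U_\infty^\circ$), and rule out $\pi_2(z),\pi_3(z)\in U_c$ by showing the orbit stays in $U_c^3\subset V_R$. The one place you do more than the paper is in the $U_c\times U_c$ case, where you argue contraction to attracting cycles; the paper only needs uniform boundedness of $\{S_a^n\}$ on a neighbourhood, and your ``coupled 2D contraction'' picture is slightly imprecise (the $b$-recurrence has a one-step delay, so it is not a self-map of $U_c\times U_c$), but this over-engineering is harmless to the conclusion.
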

\begin{proof}
Note that $J_a \subset V_R \subset D_R \times \mbb C^2$ and 
\begin{align}\label{partition}
D_R \times \mbb C^2= \bigcup_{i=0,c,\infty}\bigcup_{j=0,c,\infty}\bigcup_{k=0,c,\infty} D_R \times U_i \times U_j.
\end{align}
Suppose $z \in J_a$ and $\pi_3(z) \in U_{\infty}.$ Then 
$\pi_3 \circ S_a^2(z)=p(z_3)+az_2 \in U_{\infty}^{\circ}$ or $\pi_2 \circ S_a^3(z)\in U_{\infty}^{\circ}.$  Thus there  exists a neighbourhood of $z$ on which either 
$\pi_2 \circ S_a^{2n+1} \to \infty$ uniformly or $\pi_3 \circ S_a^{2n} \to \infty$ uniformly. This means $z \in F_a$ (i.e., the Fatou set of $S_a$) which is a contradiction! A similar argument gives that if $z \in J_a$ then $\pi_2 (z) \notin U_{\infty}.$ 
%

\medskip\no 
Further, if $z \in J_a$ and $\pi_i(z) \in U_c$ for every $i=2,3.$ Then $S_a^3(z) \in int(U_c \times U_c \times U_c).$ Hence the sequence $\{S_a^n\}$ is uniformly bounded on a neighbourhood of $z$, i.e., $z$ lies in the Fatou set of $S_a.$ 

\medskip\no 
Thus
\begin{align}\label{refined partition}
J_a\subset (D_R \times U_0 \times U_c )\cup (D_R \times U_c \times U_0 ) \cup (D_R \times U_0 \times U_0).
\end{align}
Thus the proof.
\end{proof}
\begin{rem}\label{open neighbourhood}
Note that $U_0=\bar{U}$ and $\partial U_0=\partial U_c \cup \partial U_\infty.$ Since $J_a \cap D_R \times U_c \times U_c=\emptyset$ and $J_a \cap D_R \times U_\infty \times U_\infty=\emptyset$,
\[J_a\subset (D_R \times U \times U_c )\cup (D_R \times U_c \times U) \cup (D_R \times U \times U),\] i.e.,
\[J_a\subset (D_R \times U \times W )\cup (D_R \times W \times U ) \] where $W=U \cup U_c$ and is an open neighbourhood of the filled Julia set $K_p$.
\end{rem}
\no Let $U_1=\ov{D_R} \times U_0 \times U_c$, $U_2=\ov{D_R} \times U_c \times U_0$ and $U_3=\ov{D_R} \times U_0 \times U_0.$
\begin{lem}\label{lemma 2}
Let 
\[ J_a^1=\bigcup_{n=0}^{\infty} S_a^{-2n}(U_1 \cap J_a), \; J_a^2=\bigcup_{n=0}^{\infty} S_a^{-2n}(U_2 \cap J_a), \text{ and } J_a^3={J_a \setminus {J_a^{1} \cup J_a^{2}}}.\] Then $J_a^1$, $J_a^2$ and $J_a^3$ are completely invariant under $S_a^2$.
\end{lem}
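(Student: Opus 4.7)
The plan is to establish complete invariance of each $J_a^i$ under $S_a^2$. The arguments for $J_a^1$ and $J_a^2$ run in parallel, and the case $J_a^3$ is then a formal consequence of the set-theoretic definition together with bijectivity of $S_a^2$.

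For $J_a^1$, the backward inclusion is immediate: by reindexing the defining union,
\[ S_a^{-2}(J_a^1) = \bigcup_{n \geq 1} S_a^{-2n}(U_1 \cap J_a) \subset J_a^1, \]
equivalently $J_a^1 \subset S_a^2(J_a^1)$. For the reverse inclusion, observe that $S_a^2(J_a^1) = S_a^2(U_1 \cap J_a) \cup J_a^1$, so it suffices to prove the stronger statement $S_a^2(U_1 \cap J_a) \subset U_1 \cap J_a$. Fix $z \in U_1 \cap J_a$; then $z_2 \in U_0$, $z_3 \in U_c$, and
\[ S_a^2(z) = (z_3,\; az_1 + p(z_2),\; az_2 + p(z_3)). \]
The key step is to show that the third coordinate stays in $U_c$. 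By property (4) from Section 4, each component of $U_c$ is compactly contained in the attracting basin of a periodic cycle on which $p$ contracts in the hyperbolic metric; this strict contraction forces $p(U_c) \subset U_c^\circ$ with a uniform positive Euclidean margin. Choosing $A>0$ small enough that $|a|R$ is below this margin guarantees $az_2 + p(z_3) \in U_c^\circ$. Since $S_a^2(z) \in J_a$ by invariance of $J_a$, Lemma \ref{lemma 1} yields $S_a^2(z) \in U_1 \cup U_2 \cup U_3$; but $U_c^\circ \cap U_0 = \emptyset$ rules out $U_2$ and $U_3$, leaving $S_a^2(z) \in U_1 \cap J_a$. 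The argument for $J_a^2$ is identical after interchanging the roles of the second and third coordinates, controlling $(S_a^2 z)_2 = az_1 + p(z_2)$ instead.

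For $J_a^3 = J_a \setminus (J_a^1 \cup J_a^2)$, using that $S_a^2$ is a bijection of $\mbb C^3$ and that each of $J_a$, $J_a^1$, $J_a^2$ is completely invariant,
\[ S_a^2(J_a^3) = S_a^2(J_a) \setminus S_a^2(J_a^1 \cup J_a^2) = J_a \setminus (J_a^1 \cup J_a^2) = J_a^3, \]
and analogously for $S_a^{-2}$. The main obstacle lies in the forward-invariance step for $J_a^1$ and $J_a^2$: one must verify that the perturbed orbit of $S_a^2$ does not push the distinguishing coordinate out of $U_c$. This rests on the hyperbolic contraction of $p$ on the attracting components combined with the smallness of $|a|$, which is what dictates the admissible range $A>0$.
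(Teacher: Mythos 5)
Your proof is correct and follows essentially the same strategy as the paper: establish the forward invariance $S_a^2(U_1 \cap J_a) \subset U_1 \cap J_a$ by combining the invariance of $J_a$, the partition of Lemma \ref{lemma 1}, and the stability of $U_c$ under $p$ up to a small perturbation, with the backward inclusion coming for free from the defining union and $J_a^3$ handled formally. The paper reaches the same conclusion via the intermediate observation $S_a(U_1 \cap J_a) \subset U_2 \cap J_a$ and a Fatou-set exclusion at each single step of $S_a$, whereas you compute $S_a^2$ in one shot and invoke the uniform margin $p(U_c) \subset\subset U_c$ explicitly; this is the same compactness fact the paper uses implicitly in the proof of Lemma \ref{lemma 1}, so the two arguments are interchangeable.
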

\begin{proof}
If $z \in J_a^{3}$ then $S_a^{2n}(z) \notin int(U_1)$ or $int(U_2)$ for every $n \ge 0$, i.e., 
\[ S_a^{2n}(z) \in U_3 \] for every $n \ge 0.$ 
Note that if $J_a^{1}$ and $J_a^{2}$ is completely invariant under $S_a^2$, invariance of $J_a^3$ follows from the invariance of $J_a.$

\medskip\no 
\textit{Claim: }$S_a^2(J_a^{1}) \subset J_a^{1}.$ 

\medskip\no Suppose $z \in U_1 \cap J_a$. If $S_a(z) \in U_0 \times U_c \times U_c$ or  $S_a(z) \in U_0 \times U_c \times U_\infty$ then $z$ should lie in the Fatou set of $S_a$. Hence $S_a(z) \in U_0 \times U_c \times U_0 \subset U_2$ and $S_a^2(z) \in U_1.$ Now $z \in J_a^1$ means there exists $n_z \ge 0$ such that $S_a^{2n}(z) \in U_1 \cap J_a$ for all $n \ge n_z.$ Thus the claim.

\medskip\no Also $S_a^{-2}(z) \in J_a^{1}$, i.e., $S_a^{-2}(J_a^{1}) \subset J_a^{1}.$ Thus $S_a^2(J_a^{1})=J_a^{1}=S_a^{-2}(J_a^{1}).$ A similar argument gives $S_a^2(J_a^{2})=J_a^{2}=S_a^{-2}(J_a^{2}).$ 
\end{proof}
\begin{lem}\label{lemma 3}
For a fixed $z \in J_a^{1} \cup J_a^{2}$ define the following set:
\[ J_{a,z}=\big\{w: w \text{ is a limit point of the sequence } \{S_a^{2n}(z)\}\big\}.\]
Define 
\[ J_1=\ov{\bigcup_{z \in J_a^{1}} J_{a,z}},\; J_2=\ov{\bigcup_{z \in J_a^{2}} J_{a,z}}\text{ and } J_3=\ov{J_a^3}.\]
Then $J_1$, $J_2$ and $J_3$ are closed compact subsets contained in $U_1$, $U_2$ and $U_3$ respectively and they are completely invariant under $S_a^2.$
\end{lem}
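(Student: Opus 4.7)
The plan is to verify the four assertions in order: (i) $J_1, J_2, J_3$ are closed, (ii) they are compact, (iii) the containments $J_i \subset U_i$ hold, and (iv) complete invariance under $S_a^2$. None of these requires new machinery beyond Lemma \ref{lemma 2} and the standard fact that an $\omega$-limit set in a compact space under a homeomorphism is closed and completely invariant.

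Closedness is immediate from the definitions, since each $J_i$ is taken as a closure. For compactness I would use $J_a \subset V_R$ (a compact cube in $\mbb C^3$): each $J_{a,z}$, being a set of limit points of a sequence in $J_a$, lies in $J_a$; hence so do the unions and their closures. Thus $J_i \subset J_a$ is closed inside a compact set, giving compactness.

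For the containments I would invoke the orbit analysis already done inside the proof of Lemma \ref{lemma 2}. For $z \in J_a^1$ there exists $n_z \ge 0$ with $S_a^{2n}(z) \in U_1 \cap J_a$ for all $n \ge n_z$ (this is exactly what was shown: $w \in U_1 \cap J_a$ forces $S_a(w) \in U_2$, hence $S_a^2(w) \in U_1$). Therefore every limit point of $\{S_a^{2n}(z)\}$ lies in the closed set $\overline{U_1 \cap J_a} \subset U_1$, so $J_{a,z} \subset U_1$ for every $z \in J_a^1$. Taking the union over $z$ and then the closure preserves containment in the closed set $U_1$, giving $J_1 \subset U_1$. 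The identical argument (with $U_2$ in place of $U_1$) shows $J_2 \subset U_2$. For $J_3$, the proof of Lemma \ref{lemma 2} gives that $z \in J_a^3$ implies $S_a^{2n}(z) \in U_3$ for all $n \ge 0$; in particular $J_a^3 \subset U_3$, and closedness of $U_3$ yields $J_3 = \overline{J_a^3} \subset U_3$.

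For complete invariance, the key observation is that for any fixed $z \in J_a^1$ the set $J_{a,z}$ is the $\omega$-limit set of $z$ with respect to the homeomorphism $S_a^2$, so it is completely $S_a^2$-invariant. Concretely, if $w = \lim_k S_a^{2n_k}(z)$ then $S_a^2(w) = \lim_k S_a^{2(n_k+1)}(z) \in J_{a,z}$; conversely, since $\{S_a^{2(n_k-1)}(z)\}$ lies in the compact set $J_a$ (assume $n_k \ge 1$ after discarding finitely many terms), a subsequential limit $w'$ exists, and continuity of $S_a^2$ gives $S_a^2(w') = w$, i.e.\ $S_a^{-2}(w) = w' \in J_{a,z}$. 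Taking unions, $\bigcup_{z \in J_a^1} J_{a,z}$ is $S_a^2$-invariant, and since $S_a^2$ is a homeomorphism of $\mbb C^3$ it commutes with closure, yielding complete invariance of $J_1$. The same argument handles $J_2$, while for $J_3$ we just observe that $J_a^3$ was shown to be completely invariant in Lemma \ref{lemma 2}, so its closure is as well.

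The only mildly delicate point is the backward direction of invariance of $J_{a,z}$: one must extract a convergent subsequence from $\{S_a^{2(n_k-1)}(z)\}$, but this is automatic from compactness of $J_a$, so no real obstacle arises and the lemma follows entirely by topological bookkeeping on top of Lemma \ref{lemma 2}.
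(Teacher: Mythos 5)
Your proof is correct and follows essentially the same route as the paper's: use Lemma \ref{lemma 2} to force tails of $\{S_a^{2n}(z)\}$ into $U_i$, conclude $J_i \subset U_i$ from closedness of $U_i$, and get invariance from the $\omega$-limit structure. The only cosmetic difference is that you show each $J_{a,z}$ is itself completely $S_a^2$-invariant, whereas the paper observes $S_a^{\pm 2}(J_{a,z}) \subset J_{a,S_a^{\pm 2}(z)}$ and then invokes the invariance of $J_a^i$ from Lemma \ref{lemma 2}; these are equivalent bookkeeping.
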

\begin{proof}
Let 
\[ J^{1}={\bigcup_{z \in J_a^{1}} J_{a,z}},\text{ and } J^{2}={\bigcup_{z \in J_a^{2}} J_{a,z}}.\]
For $w \in J^{1}$ there exists $z \in J_a^{1}$ and a subsequence of natural numbers $\{n_k\}$ such that 
$S_a^{2n_k}(z) \to w.$ But $S_a^{2n_k}(z) \in U_1$ for $k \ge k_0$, i.e., $w \in U_1$ as $U_1$ is a closed set. Hence  $J^1 \subset U_1.$  A similar argument gives $J^2 \subset U_2.$

\medskip\no 
\textit{Claim: } $J^{1}$ and $J^{2}$ are completely invariant under $S_a^2.$

\medskip\no 
For $w \in J^{1}$ there exists $z \in J_a^{1}$ such that $w \in J_{a,z}$, i.e., $S_a^2(w) \in J_{a,S_a^2(z)}$. By Lemma \ref{lemma 2}, $S_a^2(z) \in J_a^1$, i.e., $S_a^2(w) \in J^1.$ Hence $S_a^2(J^{1}) \subset J^{1}.$ Now $S_a^{2n_k}(z) \to w$ where $n_k \ge 1$ for $k > 1$, i.e., $S_a^{-2}(w) \in J_{a,S_a^{-2}(z)}$. Again by Lemma \ref{lemma 2}, $S_a^{-2}(J^{1}) \subset J^{1}.$ A similar argument gives that $J^{2}$ is also completely invariant under $S_a^2.$

\medskip\no Since $J_1=\ov{J^1}$, $J_2=\ov{J^2}$, $J_3=\ov{J_a^3}$ and $S_a$ is one--one for $a \neq 0$, the result follows.
\end{proof}
%
\no Let $W_c$ be a relatively compact open subset of the bounded Fatou--components of $p$ containing $U_c$ and $R'>R$. Note that $U_2  \subset D_{R'} \times W_c \times V$, $U_1  \subset D_{R'} \times V \times W_c$ and $U_3 \subset D_{R'} \times V \times V$ where $V$ be the neighbourhood of $J_p$ containing $U_0$ (that admits the hyperbolic metric $\rho$ such that $p$ is expanding with respect to $\rho$ in $V$). 

\medskip\no When $a=0$, i.e., $S_0^2(z_1,z_2,z_3)=(z_3,p(z_2),p(z_3))$ and 
\[ DS_0^2(z_1,z_2,z_3)= \begin{pmatrix}
0 &0 &1 \\ 0 &p'(z_2) &0 \\ 0 &0 &p'(z_3)
\end{pmatrix}.\] Let the eigenvalues of $DS_0^2(z)$ be denoted by $\la_i^z(0)=0, p'(z_2) \text{ and }p'(z_3)$ for $i=1,2,3$ and
\[ E_i^z(0)=\text{The eigenspace corresponding to the eigenvalue }\la_i^z(0) .\]
Then the eigenspaces $E_i^z(0)$ are as follows:
\begin{align*}
&E_1^z(0)=\{(t,0,0): t \in \mbb C\}, \\
&E_2^z(0)=\{(0,t,0): t \in \mbb C\}, \\
&E_3^z(0)=\{(t,0,tp'(z_3)): t \in \mbb C\}.
\end{align*}
 Here by $E_i^z(a)$, we will denote the eigenspaces corresponding to the eigenvalue $\la_i^z(a)$ of $DS_a^2(z)$ for $a \neq 0.$ 

\medskip\no 
\begin{lem}\label{invariance of subspaces}
 For every $1 \le i \le 3$ and $z \in \mbb C^3$, $E_i^z(0)$ is invariant under $DS_0^2(z)$.
\end{lem}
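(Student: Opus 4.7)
The plan is to observe that the lemma is essentially the defining property of eigenspaces: each $E_i^z(0)$ is, by construction, the eigenspace of $DS_0^2(z)$ associated to the eigenvalue $\lambda_i^z(0)$, so $DS_0^2(z)$ restricted to $E_i^z(0)$ is scalar multiplication by $\lambda_i^z(0)$ and therefore maps $E_i^z(0)$ into itself. I would verify this by explicit coordinate calculation using the matrix of $DS_0^2(z)$ displayed just above the statement.

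Concretely, for $i=1$ a vector $v = (t,0,0)$ is annihilated by the matrix, giving $DS_0^2(z)v = 0 \in E_1^z(0)$. For $i=2$, $v = (0,t,0)$ is sent to $(0,p'(z_2)t,0) = p'(z_2)\,v \in E_2^z(0)$, since only the middle row contributes. For $i=3$, $v = (t,0,t\,p'(z_3))$ maps to $(t\,p'(z_3),\,0,\,t(p'(z_3))^2) = p'(z_3)\,v \in E_3^z(0)$, because the first and third rows of the matrix act nontrivially only on the third coordinate of $v$. In each case the image is a scalar multiple of $v$ with scalar equal to $\lambda_i^z(0)$, so invariance follows immediately.

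I expect no real obstacle here. The only subtlety is the degenerate situation in which two of the three eigenvalues $0$, $p'(z_2)$, $p'(z_3)$ coincide or one of them vanishes (for example at critical points of $p$), so that the three subspaces may fail to be distinct or to span $\mbb C^3$. Even in that case the coordinate calculation above is unchanged and shows that each listed $E_i^z(0)$ is individually taken into itself by $DS_0^2(z)$, which is all the lemma asserts. The point of the lemma in the sequel should be to record the splitting at $a=0$, which will then be perturbed continuously in $a$ to build the invariant cone field near $J_a$ for small $|a|>0$, in analogy with the construction carried out for Theorem \ref{main theorem 1}.
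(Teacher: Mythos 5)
Your argument and the paper's are essentially the same computation: both verify that an eigenvector of $DS_0^2$ is sent by the Jacobian at that point to a scalar multiple of itself, which is the defining property of an eigenspace. The paper carries this out at the point $w=S_0^2(z)$ and labels the result accordingly (hence the $p'(p(z_2))$ and $p'(p(z_3))$ appearing in its displays), whereas you compute at $z$; after relabeling these coincide, so the two proofs agree.

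One caution worth recording, because it bears on how this lemma feeds into Proposition~\ref{hyperbolic_prop}: the tautological invariance you establish is not the same as equivariance of the bundle $z\mapsto E_i^z(0)$ under the dynamics, namely $DS_0^2(z)\big(E_i^z(0)\big)\subset E_i^{S_0^2(z)}(0)$, which is the form of invariance a hyperbolic splitting actually requires. For $i=1,2$ the two notions coincide, since $E_1$ and $E_2$ do not depend on $z$. For $i=3$ they genuinely differ: $E_3^z(0)=\{(t,0,tp'(z_3)):t\in\mathbb{C}\}$ varies with $z_3$, and your identity $DS_0^2(z)v=p'(z_3)v$ puts the image back in $E_3^z(0)$ but not in $E_3^{S_0^2(z)}(0)=\{(s,0,sp'(p(z_3))):s\in\mathbb{C}\}$, because $(tp'(z_3),0,t(p'(z_3))^2)$ is proportional to $(1,0,p'(z_3))$ rather than to $(1,0,p'(p(z_3)))$. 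This is not a defect in your proof --- you prove exactly what the lemma literally asserts, and so does the paper --- but when you remark that the lemma ``records the splitting at $a=0$'' to be perturbed into an invariant cone field, you should keep in mind that the bundle-equivariance of $E_3$ is a strictly stronger statement that does not follow from the eigenspace tautology and must be obtained (or circumvented via cone estimates and continuity of the splitting) separately.
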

\medskip\no 
\begin{proof}
Note that $DS_0^2(z)(E_1^z(0))=\{0\} \in E_1^{S_0^2(z)}(0)$ and $$DS_0^2(z)(0,t,0)_{S_0^2(z)}=(0,tp'(p(z_2)),0) \in E_2^{S_0^2(z)}(0).$$
Also $$DS_0^2(z)(t,0,tp'(z_3))_{S_0^2(z)}=\big(tp'(p(z_3)),0,tp'(p(z_3))^2\big)=\big(\tilde{t},0,\tilde{t}p'(p(z_3))\big) \in E_3^{S_0^2(z)}(0).$$
Hence the proof.
\end{proof}
\begin{prop}\label{hyperbolic_prop}
There exists $A>0$ such that for $0<|a|<A$, 
\begin{itemize}
\item[(i)] $D_{R'} \times V \times W_c$ admits a Riemannian metric (equivalent to the Euclidean metric) on such that $S_a^2$ is hyperbolic on $J_1.$
\item[(ii)] $D_{R'} \times W_c \times V$ admits a Riemannian metric (equivalent to the Euclidean metric) on such that $S_a^2$ is hyperbolic on $J_2.$
\item[(iii)] $D_{R'} \times V \times V$ admits a Riemannian metric (equivalent to the Euclidean metric) on such that $S_a^2$ is hyperbolic on $J_3.$
\end{itemize} 
\end{prop}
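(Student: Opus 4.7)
The plan is to carry out the cone--field construction from the proof of Theorem \ref{main theorem 1} separately on each of the three neighbourhoods, the only difference being which of the eigenspaces of $DS_0^2$ play the role of stable and unstable. Recall that $p$ is expanding in the conformal metric $\rho$ on $V$ and contracting in the hyperbolic metric $\mbb H$ on $W_c$ (being contained in the basin of the attracting cycle). In case (iii) on $D_{R'}\times V\times V$, both $|p'(z_2)|$ and $|p'(z_3)|$ exceed some $\la>1$ in $\rho$, so I take $E_1^z(0)$ (eigenvalue $0$) as the stable bundle and $E_2^z(0)\oplus E_3^z(0)$ as the unstable bundle of $DS_0^2$. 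In case (ii) on $D_{R'}\times W_c\times V$, $|p'(z_2)|$ is contracting in $\mbb H$ while $|p'(z_3)|>\la$, giving stable bundle $E_1^z(0)\oplus E_2^z(0)$ and unstable bundle $E_3^z(0)$; case (i) is symmetric with the roles of $z_2$ and $z_3$ swapped.

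First I would build an adapted Riemannian metric in each region by declaring the three eigenspaces at $a=0$ mutually orthogonal: use the Euclidean norm on $E_1^z(0)$, and on each of $E_2^z(0), E_3^z(0)$ the norm $\rho$ if the corresponding coordinate lies in $V$ and $\mbb H$ otherwise, identifying $E_2^z(0)=\{(0,t,0)\}$ with $T_{z_2}\mbb C$ and $E_3^z(0)=\{(t,0,tp'(z_3))\}$ with $T_{z_3}\mbb C$ via projection. For $0<|a|<A$, I would then use the continuous linear isometry $\phi_{a,z}$ matching $E_i^z(0)$ to $E_i^z(a)$ to transport the metric, setting $\|\cdot\|^z_{\varrho_a}=\|\phi_{a,z}^{-1}(\cdot)\|^z_{\varrho_0}$, exactly as in the proof of Theorem \ref{main theorem 1}.

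The remainder of the argument then parallels the type--$1$ case: introduce cones $C^s_z(\rho,a)$ and $C^u_z(\rho,a)$ of small width $\rho>0$ about the stable and unstable bundles, verify at $a=0$ by direct computation that $DS_0^2$ maps the unstable cone strictly into itself with expansion by some $\la_1>1$ and contracts the stable cone by $\la_1^{-1}$, and then shrink $A$ and $\rho$ using continuity of the splitting and metric in $(a,z)$ so that cone invariance and the expansion/contraction estimates persist uniformly on the compact sets containing $J_1, J_2, J_3$ (which lie in the respective regions by Lemma \ref{lemma 3}). Corollary 6.4.8 of \cite{KatokBook} then upgrades these cone estimates to hyperbolicity of $S_a^2$ on each $J_i$ with respect to $\varrho_a$, which is equivalent to the Euclidean metric since $\rho$ and $\mbb H$ induce the standard topology on their domains.

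The main obstacle is that the eigenbundles $E_2^z(0), E_3^z(0)$ are not exactly $DS_0^2$-invariant as $z$ varies: the slope of $E_3^z(0)$ is $p'(z_3)$ whereas at $S_0^2(z)=(z_3,p(z_2),p(z_3))$ the corresponding slope is $p'(p(z_3))$, and similarly the eigenvalues $p'(z_2), p'(z_3)$ are only approximately preserved by the dynamics. This mismatch is continuous in $z$ and uniformly bounded on the relevant compact sets, so it can be absorbed into the cone width provided the expansion/contraction strictly dominates it; making this estimate quantitative and uniform across each region is the careful step, but it follows the same template as the proof of Theorem \ref{main theorem 1}.
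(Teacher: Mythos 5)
Your proposal reproduces the paper's argument essentially step for step: the same stable/unstable assignments on the three regions, the same use of the hyperbolic metric $\mbb H$ on the $W_c$ factor and the expanding metric $\rho$ on the $V$ factor(s), the same transport of the $a=0$ splitting to $a\neq 0$ via the linear isometries $\phi_{a,z}$, the same cone-field construction, and the same appeal to Corollary 6.4.8 of \cite{KatokBook}. The one genuine point of divergence is your closing paragraph, where you flag that $E_3^z(0)$ is not $DS_0^2$-invariant as a subbundle --- and on this you are correct, and the paper's Lemma \ref{invariance of subspaces} is in fact in error. Multiplying the displayed matrix by $(t,0,tp'(z_3))$ gives $(tp'(z_3),0,tp'(z_3)^2)=p'(z_3)\cdot(t,0,tp'(z_3))$, which lies back in $E_3^z(0)$ (it is an eigenvector), \emph{not} in $E_3^{S_0^2(z)}(0)=\{(t,0,tp'(p(z_3)))\}$ as the lemma asserts. (Your inclusion of $E_2^z(0)$ among the non-invariant bundles is a slip --- it is the constant line $\{(0,t,0)\}$ and is genuinely preserved; only $E_3$ is the problem.)

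However, the remedy you suggest --- absorbing the mismatch into the cone width --- is not quantitatively justified and appears to fail on its face. At $a=0$ the differential $DS_0^2(z)$ sends the whole unstable cone onto (a neighbourhood of) the line $E_3^z(0)$, so invariance of the cone family would force $E_3^z(0)\subset C^u_{S_0^2(z)}(\rho,0)$, i.e.\ the angle between $E_3^z(0)$ and $E_3^{S_0^2(z)}(0)$ must be less than $\rho$; that angle is governed by $|p'(z_3)-p'(p(z_3))|$, which is bounded on the compacts in question but not small, so shrinking $\rho$ tightens the required tolerance rather than helping. A workable repair, for both your write-up and the paper's, is to re-centre the unstable cone at a point $w$ on the tangent to the invariant graph $T_w\Gamma_\nu$ rather than on the eigenspaces of $DS_0^\nu(w)$: $T\Gamma_\nu$ is honestly $DS_0^\nu$-invariant because $\Gamma_\nu$ is, and the identity $E_3^z(0)=T_{S_0^2(z)}\Gamma$ noted in the proof of Theorem \ref{main theorem 1} shows that the eigenbundle coincides with the graph tangent only after shifting the base point. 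As written, both your argument and the paper's elide this re-centring, which is the step that needs to be spelled out.
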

\begin{proof}
We will prove statement (ii) first.

\medskip\no For $v=(0,t,0) \in E_2^z(0)$, consider the identification, $\phi_2(v)=t \in T_{z_2}W_c. $ Define (with abuse of notation) the $$\|v\|^z_{\mbb H}=\|\phi_2(v)\|^{z_2}_{\mbb H}$$ where $\mbb H$ is the hyperbolic metric in the components of interior of $K_p$ (i,e., the bounded Julia set of $p$).  Now with respect to the, hyperbolic metric there exists a constant $0\le C<1$
such that for $v \in E_2^z(0)$
\begin{align}\label{contarctingequation}
\|DS_0^2(z)v\|^{S_0^2(z)}_{\mbb H}=\|p'(z_2)\phi_2(v)\|^{p(z_2)}_{\mbb H}< C\|\phi_2(v)\|^{z_2}_{\mbb H}=C\|v\|^z_{\mbb H}.
\end{align}

\no For $v=(t,0,tp'(z_3)) \in E_3^z(0)$, consider the identification, $\phi_3(v)=t \in T_{z_3}V. $ Now $V$ admits a hyperbolic metric $\rho$ such that $p$ is expanding. Define $\|v\|_{\eta}=\|\phi_3(v)\|_{\rho}$, i.e., there exists a $\la>1$ such that
\begin{align}\label{expandingequation}
\|DS_0^2(z)v\|^{S_0^2(z)}_{\eta}=\|p'(z_3)\phi_3(v)\|^{p(z_3)}_{\rho}> \la \|\phi_3(v)\|_{\eta}^{z_3}=\la \|v\|^{z}_{\eta}. 
\end{align}

\no For $z \in D_{R'} \times W_c \times V$, and $v=(v_1,v_2,v_3)\in E_1^z(0) \oplus E_2^z(0) \oplus E_3^z(0) = T_z (D_{R'} \times W_c \times V)$ we define the metric as:
\[ \|v\|_{\varrho_0}^z=\|v_1\|^z_{\mbb E}+\|v_2\|_{\mbb H}^z + \|v_3\|_{\eta}^{z}.\]
Let $E_u^z(a)= E_3^z(a)$ and $E_s^z(a)=E_1^z(a) \oplus E_2^z(a).$ From invariance of $E_s^z(0)$ and $E_u^z(0)$ and from (\ref{contarctingequation})and (\ref{expandingequation}) it follows that there exists $\la_0>1$ such that:
\begin{align*}
& \|DS_0^{2}(z)v^u\|_{\varrho_0}^{S_0^{2}(z)}> \la_0 \|v^u\|_{\varrho_0}^z\\
& \|DS_0^{2}(z)v^s\|_{\varrho_0}^{S_0^{2}(z)}< \la_0^{-1} \|v^s\|_{\varrho_0}^z .
\end{align*}
Now using similar arguments as in the proof of Theorem \ref{main theorem 1} in $\mbb C^3$, we define the metric $\varrho_a$ in $ D_{R'} \times W_c \times V.$ Modify the choice of $|a|$, so that in appropriate cones $C^s_z(\rho_1,a)$ and $C^u_z(\rho_1,a)$ satisfies
\begin{itemize}
\item[(i)] $DS_a^{2}(z)(C^s_z(\rho_1,a)) \subset \text{int}\; C^s_{S_a^{2}(z)}(\rho_1,a) $ and $DS_a^{-2}(z)\big(C^u_{S_a^{2}(z)}(\rho_1,a)\big) \subset \text{int}\; C^u_{z}(\rho_1,a).$

\medskip 
\item[(ii)] There exists $\la_1>1$ such that for every $z \in J_2 $
\[ \|DS_a^{-2}(z)(v)\|_{\varrho_a}^{S_a^{-2}(z)} \ge \la_1 \|v\|_{\varrho_a}^z \text{ for } v\in C^s_{S_a^2(z)}(\rho_1,a)\]
and
\[\|DS_a^{2}(z)(v)\|_{\varrho_a}^{S_a^{2}(z)} \ge \la_1 \|v\|_{\varrho_a}^z \text{ for } v\in C^u_z(\rho_1,a).\]
\end{itemize}
Finally appeal to Corollary 6.4.8 in \cite{KatokBook}, to conclude $S_a^2$ is hyperbolic on $J_2.$ Also with similar argument as in proof Theorem \ref{main theorem 1} for $\mbb C^3$ it follows that the metric $\varrho_a$ is equivalent to the Euclidean metric.

\medskip\no For statement (i) and (iii) the stable and unstable directions will change. 

\medskip\no To prove (i) we consider $E^z_s(a)=E^z_1(a) \oplus E_z^3(a)$ and $E^z_u(a)=E^z_2(a)$ The $\varrho_0$ metric on the tangent space, i.e., for $a=0$ and $v=(v_1,v_2,v_3) \in E_1^z(0) \oplus E_2^z(0) \oplus E_3^z(0)= T_z(D_{R'} \times V \times W_c)$ is defined as:
\[ \|v\|^z_{\varrho_0}=\|v_1\|^{z}_{\mbb E}+\|v_2\|^{z}_{\eta}+\|v_3\|^z_{\mbb H}.\]

\medskip\no To prove (iii) we consider $E^z_s(a)=E^z_1(a)$ and $E^z_u(a)=E^z_2(a) \oplus E_3^z(a).$ The $\varrho_0$ metric on the tangent space, i.e., for $a=0$ and $v=(v_1,v_2,v_3) \in E_1^z(0) \oplus E_2^z(0) \oplus E_3^z(0)= T_z(D_{R'} \times V \times V)$ is defined as:
\[ \|v\|^z_{\varrho_0}=\|v_1\|^{z}_{\mbb E}+\|v_2\|^{z}_{\eta}+\|v_3\|^z_{\eta}.\]

\medskip\no Now by arguing similarly as in the proof of statement (ii), the proof is complete.
\end{proof}

\no Finally, we proof Theorem \ref{main theorem 2}.
\begin{proof}[Proof of Theorem \ref{main theorem 2} for $k=3$]
For $\nu=1$, $S_a^{2}$ is a regular polynomial automorphism of $\mbb C^3$ by Proposition \ref{shift-regular} and $S_a$ is hyperbolic on $J_a$ for sufficiently small $|a|$ by Theorem \ref{main theorem 1}. Since $U^+_a$ is a connected component of the Fatou set, from Theorem 4.2 of \cite{safikov-wolf}, it follows that $S_a^{2}$ satisfies all the properties (a)--(c). But Fatou set of $S_a^{2}$ is same as $S_a$. 

\medskip\no 
For $\nu=2$, suppose there exists a wandering domain of $S_a$ where $0<|a|<A$, $A$ as obtained in Proposition \ref{hyperbolic_prop}. Let $C$ be a wandering domain of $int(K_a^+)$ and $z \in C.$ Define the set $L_z$ as follows:
\[ L_z=\{w: w\text{ is a limit point of } \{S_a^n(z)\}\}.\]
Recall that $V_R$ denote the polydisc of radius $R$ at the origin, where $R$ is the radius of filtration for $S_a.$ Since $S_a^n(z) \in V_R$ for sufficiently large $n$, $L_z \subset V_R$ and $L_z$ is non-empty. For any $w \in L_z$, it is easy to see that $S_a(w)$ and $S_a^{-1}(w) \in L_z$, i.e., $S_a(L_z)=L_z=S_a^{-1}(L_z).$ Thus $L_z \subset K_a.$ But from Lemma \ref{k-}, it follows that $L_z \subset K_a^+$ and $L_z \subset J_a^-.$

\medskip\no 
\textit{Claim: } $L_z$ is disjoint from $int(K_a^+).$

\medskip\no 
Suppose $w \in L_z \cap int(K_a^+)$ and let $C_0$ be the component of $int(K_a^+)$ that contains $w.$ For $n$, sufficiently large there exists distinct $n_1$ and $n_2$ such that $S_a^{n_1}(z)$ and $S_a^{n_2}(z) \in C_0$, i.e., $S_a^{n_1}(C_0)=S_a^{n_2}(C_0).$ This is a contradiction to the fact that $z$ lies in a wandering Fatou component of $S_a.$  

\medskip\no 
Hence $L_z \in J_a$ and $z \in W^s(J_a)$ where
$$W^s(J_a)=\{z \in \mbb C^3: \text{dist}(S_a^{n}(z),J_a) \to 0 \text{ as } n \to \infty\}.$$ Now $W^s_{S_a^2}(J_1 \cup J_2 \cup J_3) \subset W^s(J_a)$ where
\[W^s_{S_a^2}(J_1 \cup J_2 \cup J_3)=\{z \in \mbb C^3: \text{dist}(S_a^{2n}(z),J_1 \cup J_2\cup J_3) \to 0 \text{ as }n \to \infty\}\] and $J^1$, $J^2$ and $J^3$ is as obtained in Lemma \ref{lemma 2} and \ref{lemma 3}.

\medskip\no 
\begin{lem}
$W^s(J_a)=W^s_{S_a^2}(J_1 \cup J_2 \cup J_3) \subset J_a^+$.
\end{lem}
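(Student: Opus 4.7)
The plan is to split the proof into two parts: first establish the equality $W^s(J_a) = W^s_{S_a^2}(J_1 \cup J_2 \cup J_3)$, and then show that this set lies in $J_a^+$.

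For the equality, the inclusion $\supset$ is immediate because $J_1 \cup J_2 \cup J_3 \subset J_a$ and $J_a$ is $S_a$-invariant (so even-iterate convergence to a subset of $J_a$ forces full-iterate convergence to $J_a$). For the reverse inclusion $W^s(J_a) \subset W^s_{S_a^2}(J_1 \cup J_2 \cup J_3)$, I would first observe that $J_a$ itself sits inside $W^s_{S_a^2}(J_1 \cup J_2 \cup J_3)$: for $w \in J_a^i$ with $i = 1, 2$, the proof of Lemma \ref{lemma 2} gives $S_a^{2n}(w) \in U_i \cap J_a$ for every sufficiently large $n$, hence $J_{a,w} \subset J_i$ by definition, so $w \in W^s_{S_a^2}(J_i)$; and $J_a^3 = J_3$ already sits in $W^s_{S_a^2}(J_3)$. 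For an arbitrary $z \in W^s(J_a)$, the point $S_a^{2n}(z)$ lies in a shrinking neighbourhood of $J_a$ for large $n$; combining the forward $S_a^2$-invariance of $U_i \cap J_a$ (for $i = 1, 2$) with continuity of $S_a^2$, the $S_a^2$-orbit of $z$ is eventually trapped in a small neighbourhood of exactly one of $J_1, J_2, J_3$, and the local stable manifold theorem for the hyperbolic set $J_i$ (available via Proposition \ref{hyperbolic_prop}) then forces $\text{dist}(S_a^{2n}(z), J_i) \to 0$.

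For the inclusion $W^s_{S_a^2}(J_1 \cup J_2 \cup J_3) \subset J_a^+$, take $z$ in this set, so that $z$ lies on a local stable leaf $W^s(x)$ of some $x \in J_i$. Since the forward orbit of $z$ is bounded, $z \in K_a^+$, and it remains to show $z \in \partial K_a^+$. The idea is to perturb in an unstable direction: since $x \in J_a = \partial K_a^+$, there exist $y_k \to x$ with $y_k \notin K_a^+$, and by the local product structure of the hyperbolic set $J_i$ (stable holonomy preserves boundedness of forward orbits), we may take $y_k \in W^u_{\text{loc}}(x)$. For each large $n$, pick $\tilde z_n \notin K_a^+$ on $W^u_{\text{loc}}(S_a^{2n}(x))$ close to $S_a^{2n}(z)$; because $S_a^{-2}$ contracts along the unstable direction and $K_a^+$ is $S_a$-invariant, the points $S_a^{-2n}(\tilde z_n)$ lie outside $K_a^+$ and converge to $z$, showing $z \in \partial K_a^+ = J_a^+$.

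The main obstacle is the first step. The subtle point is that $J_a^1$ can be strictly larger than $J_1$ and need not be closed, so while $\omega$-limit sets of $S_a^2$-orbits starting within $J_a$ automatically land in $J_1 \cup J_2 \cup J_3$, for a point $z \in W^s(J_a) \setminus J_a$ one has to upgrade mere shadowing of a $J_a$-orbit to the statement that $S_a^{2n}(z)$ actually tracks a single $J_i$ for all large $n$. This is handled by combining the forward invariance of $U_i \cap J_a$ from Lemma \ref{lemma 2} with the uniform hyperbolicity on each $J_i$ from Proposition \ref{hyperbolic_prop} to pin the orbit of $z$ to a neighbourhood of one fixed $J_i$ once it is close enough to $J_a$.
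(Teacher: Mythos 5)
Your overall decomposition — prove the equality and then the inclusion into $J_a^+$ — matches the paper's structure, and your easy direction $W^s_{S_a^2}(J_1\cup J_2\cup J_3)\subset W^s(J_a)$ is handled the same way. The other two pieces, however, diverge from the paper in ways that are worth flagging.

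For the inclusion $W^s(J_a)\subset W^s_{S_a^2}(J_1\cup J_2\cup J_3)$, the paper does not invoke the stable manifold theorem. Instead, it takes $w_n\in J_a$ with $\mathrm{dist}(S_a^{2n}(z),w_n)\to 0$, picks a limit point $w_0$ of $\{w_n\}$, identifies $w_0\in J_a^i$, and then (for $i=1,2$) uses only the uniform continuity of a fixed iterate $S_a^{2m_{l_0}}$ on $V_R$ to push sub-orbits of $z$ arbitrarily close to a point $\tilde w\in J_{a,w_0}\subset J_i$. Your assertion that "the $S_a^2$-orbit of $z$ is eventually trapped in a small neighbourhood of exactly one $J_i$'' is exactly the delicate point you yourself single out, and as written it is not established: a priori the limit set $\omega_{S_a^2}(z)$ could meet $J_a^1\setminus J_1$ and $J_a^3$ simultaneously, and forward invariance of $U_i\cap J_a$ alone does not pin the orbit to one $J_i$. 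The paper's subsequence/uniform-continuity mechanism sidesteps the trapping issue entirely and does not need any stable-manifold machinery for this step.

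For $W^s_{S_a^2}(J_1\cup J_2\cup J_3)\subset J_a^+$, your argument genuinely departs from the paper and rests on unproved hypotheses. You invoke local product structure of $J_i$ (and a stable foliation of a full neighbourhood with a holonomy that preserves $K_a^+$) to slide the approximating points $y_k\notin K_a^+$ onto $W^u_{\mathrm{loc}}(x)$. But Proposition \ref{hyperbolic_prop} only establishes a dominated splitting with invariant cone fields over $J_i$; it does not give local maximality of $J_i$, hence no local product structure or stable foliation through nearby non-invariant points, so this step is a genuine gap. The paper instead argues by contradiction using only the cone estimate: if $z$ were in $\mathrm{int}(K_a^+)$, normality of $\{S_a^{2n}\}$ on a neighbourhood of $z$ would give locally uniform convergence along a subsequence, yet once the orbit of $z$ enters and stays in the relevant block $D_{R'}\times W_c\times V$ (etc.), the invariant cone field forces $\|DS_a^{2n}(z)v\|_{\varrho_a}\ge\lambda^n\|v\|_{\varrho_a}$ for $v$ in the unstable cone, contradicting uniform convergence. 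That argument uses nothing beyond what Proposition \ref{hyperbolic_prop} actually provides and is the route you should take.
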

\begin{proof} 
Suppose $z \in W^s(J_a)=W^s_{S_a^2}(J_a).$ Then there exists $w_n \in J_a$ such that $\text{dist}(S_a^{2n}(z),w_n) \to 0$ as $n \to \infty.$ Let $A(\{w_n\})$ the set of limits of the sequence $\{w_n\}$, i.e.,
\[A(\{w_n\})=\{w: w_{n_k} \to w\}.\]
Since $J_a$ is closed, $A(\{w_n\}) \subset J_a.$ Let $w_0 \in A(\{w_n\})$, then $w_0 \in J_a^i$ for some $i=1,2,\text{ or }3$, where $J_a^i$ is as defined in Lemma \ref{lemma 2}.

\medskip\no 
\textit{Case 1: }If $w_0 \in J_a^3$, then $\text{dist}(S_a^{2n_k}(z),w_0) \to 0$, i.e., $z \in W^s_{S_a^2}(J_3).$

\medskip\no 
\textit{Case 2: }If $w_0 \in J_a^1$ then consider $J_{a,w_0}$ as in Lemma \ref{lemma 3}. Let $\tilde{w} \in J_{a,w_0} \subset J_1$, i.e., there exists a subsequence of natural numbers $\{m_l\}$ such that $S_a^{2m_l}(w_0) \to \tilde{w}$ as $ l \to \infty.$ Let $\ep>0$ be arbitrary, then there exists $l_0 \ge 1$ sufficiently large such that $\text{dist}(S_a^{2m_{l_0}}(w_0),\tilde{w})< \ep.$ Now $S_a^{2m_{l_0}}$ is uniformly continuous on $V_R \supset J_a$, i.e., there exists $\delta>0$ such that 
\[ \text{dist}(S_a^{2m_{l_0}}(z),S_a^{2m_{l_0}}(w))< \ep \text{ whenever } \text{dist}(z,w)< \delta\]
for $z,w \in V_R.$ Now buy assumption there exists $k_0 \ge 1$ such that 
\[ \text{dist}(S_a^{2n_k}(z),w_0)< \delta \text{ for } k \ge k_0\]
i.e.,
\[ \text{dist}(S_a^{2n_k+2m_{l_0}}(z),\tilde{w})< \ep \text{ for } k \ge k_0.\]
\textit{Case 3: } Similar computation for $w_0 \in J_a^2.$

\medskip\no Note that $A(\{w_n\})$ is a finite set, i.e., $$\bigcup_{w \in A(\{w_n\})} J_{a,w} \text{ is also finite.}$$ Hence all possible subsequences of $\{S_a^n(z)\}$ converges either in $J_1$, $J_2$ or $J_3$ and thus the proof.
\end{proof}
%
\no Thus $z \in W^s_{S_a^2}(J_1 \cup J_2 \cup J_3).$ By Remark \ref{open neighbourhood}, $S_a^{2n_0}(z)$ either lies in $D_R \times W \times U \cup D_R\times U \times W $ for large enough $n_0.$ 

\medskip\no Suppose $S_a^{2n_0}(z) \in U_1$. Since $L_z \subset J_a$, it follows that $S_a^{2n_0+2n}(z) \in D_{R'}\times W_c \times V$ for every $n \ge 1.$ By the proof of Proposition \ref{hyperbolic_prop}, for $p \in D_{R'}\times W_c \times V$ there exists cone $C_p \subset T_p$ such that for $v \in C_p$
\begin{align*}
\|DS_a^{2}(p)v\|^{S_a^{2}(p)}_{\varrho_a} \ge \la \|v\|^p_{\varrho_a}
\end{align*}
and $DS_a^{2}(p)v \in C_{S_a^2(p)}$ if $S_a^2(p) \in D_{R'}\times W_c \times V.$
Thus
\begin{align}\label{final_1}
 \|DS_a^{2n+2n_0}(z)v\|^{S_a^{2n+2n_0}(z)}_{\varrho_a} \ge \la^n \|v\|^{S_a^{2n_0}(z)}_{\varrho_a}.
\end{align} 
By assumption $S_a^{2n}(z)$ converges uniformly around a neighbourhood of $z$ in the Euclidean norm which means that $S_a^{2n}(z)$ converges uniformly in the metric $\varrho_a$ as well, since the topology induced $\varrho_a$ is equivalent to the topology induced Euclidean norm. But this is not possible from \ref{final_1}. Hence, $z \in J_a^+.$  

\medskip\no A similar argument works if $S_a^{2n_0}(z) \in U_2$ or $S_a^{2n_0}(z) \in U_3$ such that 
\[ S_a^{2n+2n_0} \in D_{R'} \times V \times W_c \text{ or } S_a^{2n+2n_0} \in D_{R'} \times V \times V.\] This proves that $z \in J_a^+$ and hence there does not exists any wandering domain of $S_a.$

\medskip\no 
\textit{Claim: } $J_a$ is the maximal invariant set in  $\tilde{U}=D_R \times W \times U \cup D_R \times U \times W.$

\medskip\no Suppose $\cal{J}$ be an invariant subset of $\tilde{U}$, and $z \in \cal{J}.$ Then for sufficiently large $n_0$, $S_a^{2n+n_0}(z) \in U_i $ for some $1 \le i \le 3.$ Now a similar argument as above, gives that $z \in J_a^+.$ Since $\cal{J}$ is completely invariant $z \in K_a^-=J_a^-$, i.e., $z \in J_a.$ Thus $\cal{J} \subset J_a$ and hence the claim.

\medskip\no Now the proof is similar to the Proof of Theorem 5.6 in \citep{BS1}. However we will revisit the arguments for the sake of completeness.

\medskip\no First we prove that every Fatou component in $K_a^+$ is a basin of attraction of a periodic point. Let $C$ be a Fatou component in $K_a^+$ with period $m$, i.e., $S_a^m(C)=C.$ Consider $C'=C \cap V_R$, $R$ sufficiently large such that $C'$ is a bounded domain in $\mbb C^k$. Now $C \subset V_R \cup V_R^-$ and $S_a^m(V_R) \subset V_R \cup V_R^+.$ Hence $S_a^m(C') \subset C'.$ With abuse of notation, let us assume $m=1$. Since the sequence $\{S_a^n\}$ is normal in $C'$ and the subsequences are uniformly convergent by Theorem 1.1 in \cite{Be}, it follows that either iterates of the points diverge to the boundary or converge towards a submanifold in $C'.$

\medskip\no Note that by filtration properties, every point of $C$ eventually lands in $C'.$ If iterates of point in $C'$ diverges to the boundary, it means every point of $C$ diverges to the boundary of $C$ and $\partial C \subset J_a^+.$ Thus if $L$ is the set of limit points of $S_a$ in $C'$, then $L$ is actually the set of limit point for the iterates of $S_a$ in $C$. Hence Theorem 1.1 in \cite{Be} assures that $L \subset J_a^+.$ Also, as $S_a^{-1}(C)=C$ it follows that $S_a^{-1}(L)=L$. Thus $L \subset J_a$ and $C \subset W^s(J_a) \subset J_a^+$, which is not possible.

\medskip\no So there  exists a complex connected submanifold $M \in C' \subset C$ such that $\{S_a^n\}$ converge to $M$ and $S_a(M)=M.$ If dimension $M \ge 1$ then $M$ cannot be compact in $C$, i.e., $\partial M \subset \partial C \subset J_a.$ Further Theorem 1.1 from \cite{Be}, says that ${S_a}_{|M} \in {\sf Aut}(M)$, i.e., $S_a$ is an isometry on $M$ with respect to the Kobayashi metric on $C'$. 

\medskip\no Let $\tilde{M}=M \setminus \tilde{U} \subset C'$, where $\tilde{U}$ is the open set in which $J_a$ is maximal. Clearly $\tilde{M}$ is compact in $C'.$ Now since $S_a$ is an isometry and orbit of any point in the interior of $C'$ does not diverge to the boundary, the set
\[Q=\ov{ \bigcup_{n=0}^\infty S_a^{n}(\tilde{M})}\]
is a compact set in $C'.$ Thus there exists $p \in M \setminus Q \subset C'$ such that $S_a^n(p) \in \tilde{U}$ for every $n \ge 0.$ Since $p \notin J_a$ and $J_a$ is the maximal invariant set in $\tilde{U}$ this is not possible. Hence the dimension of $M$ is zero, i.e., $M$ is a single point. This proves (b).

\medskip\no To prove (c), suppose there are infinitely many attracting periodic points, say $\{p_i\}.$ Then $\{p_i\} \in V_R$ (where $R$ is the radius of filtration for $|a|<1$). Consider $L$ to be the set of limit points of the sequence $\{p_i\}.$ By repeating similar argument as in the proof of part (a), one can prove that $L \subset J_a.$ This means there exists $p_{i_0}$ such that the orbit of $p_{i_0} \in \tilde{U}.$ But this contradicts the fact that $J_a$ is the maximal invariant set in $\tilde{U}.$ Thus the proof.
\end{proof}

\begin{ex} Let $p(z)=z^2$ and $$S_a^2(z_1,z_2,z_3)=(z_3,az_1+z_2^2,az_2+z_3^2) \text{ and }S_0(z_1,z_2,z_3)=(z_2,z_3,z_2^2).$$
Let $\Gamma_2=(z_1,z_2,z_1^2)$, i.e., graph of the function $\psi: \mbb C^2 \to \mbb C$, $\psi(z_1,z_2)=z_1^2.$ Hence the map $\phi: \mbb C^2 \to \Gamma_2$ defined as $\phi(z_1,z_2)=(z_1,z_2,\psi(z_1,z_2))$ is a biholomorphism. Also $$S_0^2(\Gamma_2)=\Gamma_2, \text{ i.e., } \phi \circ p_2 \circ \phi^{-1}(\Gamma_2)=\Gamma_2$$
where $p_2(z_1,z_2)=(p(z_1),p(z_2))=(z_1^2,z_2^2).$ Note that the Julia set for $p_2$ in $\mbb C^2$ is $$J_{p_2}=\bar{\mbb D} \times S^1 \cup S^1 \times \bar{\mbb D}$$ where $\mbb D$ is the unit disc and $S^1$ is the unit circle in $\mbb C.$ Hence $J_0=\phi(J_{p_2}).$ Now from definition of the subset $J_i^0$, $1 \le i \le 3$ as in Lemma \ref{lemma 3}, they are as follows:
\[ J_1^0=\phi(\{0\} \times S^1), \;\; J_2^0=\phi(S^1\times \{0\}) \text{ and } J_3^0=\phi(S^1 \times S^1).\]
\end{ex}
\section{Sketch of the proof of Theorem \ref{main theorem 1} and \ref{main theorem 2}}
\no For any $1 \le \nu \le k-1$, recall that the map $S_a^{\nu}$ is of the form
\[ S_a^{\nu}(z_1,\hdots,z_k)=(z_{\nu+1},\hdots,z_k, az_1+p(z_{k-\nu+1}),\hdots,az_{\nu}+p(z_k)).\]
Let $D_R $ be the open disc of radius $R$ in $\mbb C$, where $R$ is the filtration radius for $S_a$, $0<|a|<A.$ Recall from the previous section 
$\mbb C=U_0 \cup U_c \cup U_{\infty}$ where we can consider $U_0=\bar{U}$, $U_c$ and $U_{\infty}$ to be closed subsets of $\mbb C.$ Let $U_{i_1,\hdots,i_\nu}$ denote the following sets for $i_j=0,c \text{ or } \infty$ and $1 \le j \le \nu$
\[ U_{i_1,\hdots,i_\nu}=\ov{D_R}^{k-\nu}\times U_{i_1}\times \hdots U_{i_\nu}.\]
\begin{lem}\label{structure of J}
If $|a|$ is sufficiently small then $$J_a \subset \bigcup_{i_j=0,c} U_{i_1,\hdots,i_\nu} \setminus U_{c,c,\hdots,c}$$
\end{lem}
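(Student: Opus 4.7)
The plan is to generalise the two exclusion arguments underlying Lemma \ref{lemma 1} (the $k=3$, $\nu=2$ case). It suffices to show that for $z \in J_a$ and any $j \in \{k-\nu+1, \ldots, k\}$ one has $z_j \notin U_\infty$, and additionally that $(z_{k-\nu+1}, \ldots, z_k)$ cannot lie entirely in $U_c^{\nu}$. Combined with the filtration inclusion $J_a \subset V_R$, these two exclusions yield the claimed decomposition.

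A convenient device is to encode the orbit as a single scalar sequence. For $z \in V_R$ set $y_m = z_m$ for $1 \le m \le k$ and extend inductively by $y_m = p(y_{m-\nu}) + a\, y_{m-k}$ for $m > k$. Then $(S_a^n(z))_i = y_{n+i}$ for all $n \ge 0$ and $1 \le i \le k$, and for $z \in J_a$ the filtration forces $|y_m| \le R$ for every $m$.

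For the first exclusion, suppose $z_j \in U_\infty$ for some $j \in \{k-\nu+1, \ldots, k\}$. The subsequence $v_l := y_{j+l\nu}$ satisfies the one-variable recurrence $v_{l+1} = p(v_l) + a\, y_{j+(l+1)\nu - k}$, whose perturbation has modulus at most $|a| R$. Choosing $A$ so that $A R < \eta(p)$, with $\eta(p)$ from Proposition \ref{U-infinity}, forces $|v_l| \to \infty$, contradicting $|y_m| \le R$. For the second exclusion, pick an open neighbourhood $W_c$ of $\bar U_c$, relatively compact in the union of basins of attraction of the bounded attracting periodic points of $p$, with $p(\bar W_c) \subset W_c$, and shrink $A$ so that $p(\bar W_c) + \bar D(0, A R) \subset W_c$. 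If $(z_{k-\nu+1}, \ldots, z_k) \in U_c^{\nu}$ for some $z \in J_a$, continuity produces a small neighbourhood $N$ of $z$ on which $(\tilde z_{k-\nu+1}, \ldots, \tilde z_k) \in W_c^{\nu}$ for every $\tilde z \in N$. Induction on $m$ then shows $\tilde y_m \in W_c$ for every $m > k$: for $m \in \{k+1, \ldots, k+\nu\}$ the term $\tilde y_{m-\nu} = \tilde z_{m-\nu}$ lies in $W_c$ by hypothesis, while for $m > k+\nu$ it lies in $W_c$ by the induction hypothesis, and the additive perturbation $a\, \tilde y_{m-k}$ stays in $\bar D(0, A R)$ throughout. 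Consequently $\{S_a^n(\tilde z)\}$ is uniformly bounded on $N$, so $\{S_a^n\}$ is normal on $N$ and $z$ is a Fatou point, contradicting $z \in J_a$.

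The main technical point is the simultaneous choice of $A$: the product $A R$ must be small enough both for Proposition \ref{U-infinity} (in the first exclusion) and for the forward invariance of $W_c$ under the perturbed recurrence (in the second exclusion). Both are open conditions on $a$, so both are satisfied for all sufficiently small $A > 0$, completing the argument.
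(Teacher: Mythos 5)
Your proof is correct and follows the same two-step exclusion strategy as the paper's: ruling out any coordinate $z_{i_0}\in U_\infty$ for $k-\nu+1\le i_0\le k$ via Proposition~\ref{U-infinity} and the filtration bound $J_a\subset V_R$, and ruling out the all-$U_c$ case by showing the orbit stays in a bounded, forward-invariant neighbourhood. You supply more detail than the paper's terse argument (the scalar orbit encoding $y_m$ and the explicit forward-invariant $W_c$ with $p(\overline{W_c})+\overline{D}(0,AR)\subset W_c$), but the underlying idea is identical.
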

\begin{proof}
We will prove this by contradiction. Let $z \in J_a$ such that $z_{i_0} \in U_{\infty}$ for some $k-\nu+1 \le i_0 \le k. $ Then $\pi_{i_0} S_a^{n\nu}(z) \to \infty$ as $n \to \infty$, i.e., $z$ lies in the Fatou set of $S_a$, which is not possible. Thus we prove that 
$$J_a \subset \bigcup_{i_j=0,c} U_{i_1,\hdots,i_\nu}.$$
Suppose $\pi_i(z) \in U_c$ for every $k-\nu+1 \le i \le k $, then there exists a neighbourhood of $z$ such that $S_a^n$ is bounded. Hence $z$ lies in Fatou set of $S_a.$ Thus the proof.
\end{proof}
\no For every $U_{i_1,\hdots,i_\nu}$ let 
$$\cal{N}_1^{i_1,\hdots,i_\nu}=\{j: i_j=c, 1 \le j \le \nu\} \text{ and } \cal{N}_2^{i_1,\hdots,i_\nu}=\{j: i_j=0, 1 \le j \le \nu\}.$$
Thus from Lemma \ref{structure of J} if $J_a \cap U_{i_1,\hdots,i_\nu} \neq \emptyset$ the $$\# \cal{N}_1^{i_1,\hdots,i_\nu} +\# \cal{N}_2^{i_1,\hdots,i_\nu}=\nu \text{ and }\# \cal{N}_2^{i_1,\hdots,i_\nu} \ge 1.$$
Also let \[ \cal{M}=\{(i_1,\hdots,i_\nu): i_j=0 \text{ or } c \text{ for every } 1 \le j \le \nu\}.\]
\begin{lem}\label{U_0 time}
For $(i_1,\hdots,i_\nu) \in \cal{M}$ let
\[ J_a^{i_1,\hdots,i_\nu}=J_a \cap U_{i_1,\hdots,i_\nu}.\] 
\begin{itemize}
\item[(i)]For $\# \cal{N}_2^{i_1,\hdots,i_\nu}=1$ let 
\[ J_a^{(i_1,\hdots,i_\nu)} = \ov{\bigcup_{n=0}^{\infty} S_a^{-\nu n}(J_a^{i_1 \hdots,i_\nu})}\] 
and \[J_1=\bigcup_{\# \cal{N}_2^{i_1,\hdots,i_\nu}=1} J_a^{(i_1,\hdots,i_{\nu})}.\]
Then $J_a^{(i_1,\hdots,i_\nu)}$ is completely invariant under $S_a^{\nu}.$ 

\medskip\no 
\item[(ii)]For $\# \cal{N}_2^{i_1,\hdots,i_\nu}=m$ such that $2 \le m \le \nu$ let 
\[ J_a^{(i_1,\hdots,i_\nu)} = \ov{\bigcup_{n=0}^{\infty} S_a^{-\nu n}(J_a^{i_1 \hdots,i_\nu})}\setminus \bigcup_{i=1}^{m-1} J_i\] and \[J_m=\bigcup_{\# \cal{N}_2^{i_1,\hdots,i_\nu}=m} J_a^{(i_1,\hdots,i_{\nu})}.\]
Then $J_a^{(i_1,\hdots,i_\nu)}$ is completely invariant under $S_a^{\nu}.$ 
\end{itemize}
\end{lem}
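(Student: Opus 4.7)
The plan is to generalize Lemmas \ref{lemma 2} and \ref{lemma 3} to arbitrary $k$ by stratifying $J_a$ according to the level $m=\#\cal{N}_2^{i_1,\hdots,i_\nu}$, which counts how many of the last $\nu$ coordinates lie in $U_0$ as opposed to $U_c$. The first and most delicate task is to establish a forward-transition rule: for $|a|$ sufficiently small and $z \in J_a \cap U_{i_1,\hdots,i_\nu}$, the image $S_a^\nu(z)$ belongs to $U_{i_1',\hdots,i_\nu'}$ for some tuple with $\{j:i_j=c\} \subseteq \{j:i_j'=c\}$. Indeed, the $j$-th entry of the last $\nu$-block of $S_a^\nu(z)$ equals $az_j+p(z_{k-\nu+j})$: if $z_{k-\nu+j}\in U_c$ then $p(z_{k-\nu+j})\in U_c$ by property (3) of hyperbolic polynomials, and strict contraction of $p$ on $U_c$ leaves enough room that the $O(|a|)$ perturbation stays inside $U_c$; if $z_{k-\nu+j}\in U_0$ then the image may land in $U_0$ or in $U_c$, but not in $U_\infty$, since otherwise $z$ would lie in the Fatou set, contradicting Lemma \ref{structure of J}. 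Consequently the level $m$ is non-increasing under $S_a^\nu$, and whenever preserved the $c$-positions coincide, so the entire tuple is preserved.

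For case (i), when $m=1$, the level cannot descend further without forcing all last-$\nu$ coordinates into $U_c$, which Lemma \ref{structure of J} precludes on $J_a$. Therefore $S_a^\nu(J_a^{i_1,\hdots,i_\nu})\subseteq J_a^{i_1,\hdots,i_\nu}$, and $E:=\bigcup_{n\ge 0}S_a^{-\nu n}(J_a^{i_1,\hdots,i_\nu})$ is completely invariant under $S_a^\nu$. Since $S_a^\nu$ is a homeomorphism of $\mbb C^k$, complete invariance persists upon closure, yielding the desired invariance of $J_a^{(i_1,\hdots,i_\nu)}=\ov{E}$; this mirrors Lemma \ref{lemma 2} verbatim. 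Taking a finite union, $J_1$ is closed and completely invariant.

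For case (ii) one proceeds by induction on $m$, assuming $\bigcup_{i<m}J_i$ is closed and completely invariant under $S_a^\nu$. Set $E=\bigcup_{n\ge 0}S_a^{-\nu n}(J_a^{i_1,\hdots,i_\nu})$. The first paragraph shows that $S_a^\nu(J_a^{i_1,\hdots,i_\nu})$ either preserves the tuple (landing back in $J_a^{i_1,\hdots,i_\nu}\subseteq E$) or drops to a lower stratum, which by induction lies in $\bigcup_{i<m}J_i$. Take $z\in J_a^{(i_1,\hdots,i_\nu)}=\ov{E}\setminus\bigcup_{i<m}J_i$ and write $z=\lim z_l$ with $z_l\in E$: by closedness of $\bigcup_{i<m}J_i$ we may assume $z_l\notin\bigcup_{i<m}J_i$, whence by inductive invariance $S_a^\nu(z_l)\notin\bigcup_{i<m}J_i$, forcing $S_a^\nu(z_l)\in E$ by the transition rule. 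Passing to the limit gives $S_a^\nu(z)\in\ov{E}$, and combined with $S_a^\nu(z)\notin\bigcup_{i<m}J_i$ (by inductive invariance) one obtains $S_a^\nu(z)\in J_a^{(i_1,\hdots,i_\nu)}$. Backward invariance is analogous but simpler, since $S_a^{-\nu}(E)\subseteq E$ directly gives $S_a^{-\nu}(\ov{E})\subseteq\ov{E}$.

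The main obstacle is the quantitative control in the first step: a point close to $\partial U_c$ could in principle be pushed out by the $az_j$ perturbation. I would handle this cleanly by replacing $U_c$ with a slightly larger open neighbourhood $W_c$ of the immediate attracting basins of $p$ whose closure is compactly contained in them (as done in the $\mbb C^3$ case preceding Proposition \ref{hyperbolic_prop}), so that the strict contraction of $p$ provides a uniform buffer; choosing $A$ so that $AR$ is dominated by this buffer then yields the transition rule uniformly for all $0<|a|<A$.
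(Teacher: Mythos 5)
Your proposal is correct and takes essentially the same route as the paper: you isolate the forward-transition rule (the set of $c$-positions among the last $\nu$ coordinates is non-decreasing under $S_a^\nu$ on $J_a$), rule out a strict increase for $m=1$ via Lemma \ref{structure of J} and for $m\ge 2$ via the inductive exclusion of $\bigcup_{i<m}J_i$, and then deduce forward and backward invariance; the paper's two cases carry out exactly this coordinate-by-coordinate check. Your explicit treatment of the closure via approximating sequences $z_l\to z$, and the remark about needing the $W_c$-type buffer so that the $O(|a|R)$ perturbation cannot push a $U_c$-coordinate out, spell out details that the paper's sketch leaves implicit, but the underlying argument is the same.
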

\begin{proof}
The proof is similar to Lemma \ref{lemma 2}. 

\medskip\no
\textit{Case 1: }Let $z \in J_a^{(i_1,\hdots,i_{\nu})}$ such that $\# \cal{N}_2^{i_1,\hdots,i_\nu}=m=1$. Let $\cal{N}_2^{i_1,\hdots,i_\nu}=\{{j_0}\}$. Then $\pi_{k-\nu+j_0}(z) \in U_0$ and $\pi_{k-\nu+j}(z)\in U_c$ for every $1 \le j \neq j_0 \le \nu$, i.e.,$$\pi_{k-\nu+j}S_a^{\nu}(z)=az_j+p(z_{k-\nu+j}) \in U_c \text{ for } j \neq j_0$$ and
$$\pi_{k-\nu+j_0}S_a^{\nu}(z)=az_{j_0}+p(z_{k-\nu+{j_0}}) \in U_c \cup U_0 .$$
But by Lemma \ref{structure of J}, $\pi_{k-\nu+j_0}S_a^{\nu}(z) \in U_0.$ Hence $S_a^{\nu}(J_a^{(i_1,\hdots,i_{\nu})}) \subset J_a^{(i_1,\hdots,i_{\nu})}.$ Also $S_a^{-\nu}(J_a^{(i_1,\hdots,i_{\nu})}) \subset J_a^{(i_1,\hdots,i_{\nu})}$ is true from the definition of $J_a^{(i_1,\hdots,i_{\nu})}.$ Thus the proof. 

\medskip\no 
\textit{Case 2: }Let $z \in J_a^{(i_1,\hdots,i_{\nu})}$ such that $\# \cal{N}_2^{i_1,\hdots,i_\nu}=m \ge 2.$ Assume the statement is true for all $i$, $1 \le i \le m-1.$ Then $\pi_{k-\nu+j}(z) \in U_0$ for $j \in \cal{N}_2^{i_1,\hdots,i_\nu}$ and $\pi_{k-\nu+j}(z)\in U_c$ for every $j \in \cal{N}_1^{i_1,\hdots,i_\nu}.$, i.e.,$$\pi_{k-\nu+j}S_a^{\nu}(z)=az_j+p(z_{k-\nu+j}) \in U_c \text{ for } j \in \cal{N}_1^{i_1,\hdots,i_\nu}$$ and
$$\pi_{k-\nu+j}S_a^{\nu}(z)=az_{j}+p(z_{k-\nu+{j}}) \in U_c \cup U_0 \text{ for } j \in \cal{N}_2^{i_1,\hdots,i_\nu}.$$
But if $\pi_{k-\nu+j}S_a^{\nu}(z) \in U_c$ then $z \in J_{m-1}$, which is not possible. Hence $S_a^{\nu}(J_a^{(i_1,\hdots,i_{\nu})}) \subset J_a^{(i_1,\hdots,i_{\nu})}.$ Also $S_a^{-\nu}(J_a^{(i_1,\hdots,i_{\nu})}) \subset J_a^{(i_1,\hdots,i_{\nu})}$ since each $J_i$, $1 \le i \le m-1$ is completely invariant under $S_a^\nu.$ Thus the proof. 
\end{proof}
%
\no For $z \in J_a$, let  $L(z)$ denote the set of limit points of $\{S_a^{n \nu}(z)\}$, i.e.,
\[ L(z)=\{w: w \text{ is a limit point of } S_a^{n \nu}(z)\}.\] 
Now for $i_j=0 \text{ or }c$ whenever $1 \le j \le \nu$ we define the sets $J_{i_1,i_2,\hdots,i_\nu}$ as follows:
\[ J_{i_1,i_2,\hdots,i_\nu}=\ov{\bigcup_{z \in J_a^{(i_1,\hdots,i_{\nu})}} L(z)}.\]
\begin{lem}
$J_{i_1,\hdots,i_\nu}$ is a closed compact subset of $U_{i_1,\hdots,i_{\nu}}$ which is completely invariant under $S_a^{\nu}.$
\end{lem}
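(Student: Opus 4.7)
The plan is to mimic the argument of Lemma \ref{lemma 3} for the $k=3$ case, upgraded to the multi-index bookkeeping $(i_1,\hdots,i_\nu) \in \cal M$. There are three things to check: closedness/compactness, the containment $J_{i_1,\hdots,i_\nu}\subset U_{i_1,\hdots,i_\nu}$, and complete invariance under $S_a^\nu$. Closedness is immediate since $J_{i_1,\hdots,i_\nu}$ is defined as a closure. For compactness, note that $U_0=\ov U$ and $U_c$ are bounded closed subsets of $\mbb C$ ($U_0$ is a bounded neighbourhood of $J_p$ and $U_c$ consists of finitely many compact components of $U^c$), so the product $U_{i_1,\hdots,i_\nu}=\ov{D_R}^{\,k-\nu}\times U_{i_1}\times \cdots \times U_{i_\nu}$ is compact, and then compactness of $J_{i_1,\hdots,i_\nu}$ reduces to the containment claim.

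To prove $J_{i_1,\hdots,i_\nu}\subset U_{i_1,\hdots,i_\nu}$, I take $w\in J_{i_1,\hdots,i_\nu}$ and approximate it by points $w_\ell\in L(z_\ell)$ with $z_\ell\in J_a^{(i_1,\hdots,i_\nu)}$. Each $w_\ell$ is a sublimit of $\{S_a^{n_m\nu}(z_\ell)\}$, so it suffices to show that $S_a^{n\nu}(z_\ell)\in U_{i_1,\hdots,i_\nu}$ for all sufficiently large $n$; since $U_{i_1,\hdots,i_\nu}$ is closed, this gives $w_\ell\in U_{i_1,\hdots,i_\nu}$ and hence $w\in U_{i_1,\hdots,i_\nu}$ after a second limit. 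The eventual containment is exactly what the computations in the proof of Lemma \ref{U_0 time} established: starting from the defining preimage $S_a^{\nu n_0}(z_\ell)\in J_a\cap U_{i_1,\hdots,i_\nu}$, the coordinate-wise analysis of $p(z_{k-\nu+j})+az_j$ showed that the orbit cannot leave $U_{i_1,\hdots,i_\nu}$ at the next iterate (using Lemma \ref{structure of J} to rule out $U_\infty$ components and the inductive exclusion of the lower $J_i$'s to rule out additional $U_c$ components).

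Finally, for complete invariance under $S_a^\nu$: if $w\in L(z)$ with $z\in J_a^{(i_1,\hdots,i_\nu)}$ and $S_a^{n_k\nu}(z)\to w$, then $S_a^{(n_k+1)\nu}(z)\to S_a^\nu(w)$, so $S_a^\nu(w)\in L(S_a^\nu(z))$, and $S_a^\nu(z)\in J_a^{(i_1,\hdots,i_\nu)}$ by Lemma \ref{U_0 time}. Passing to the tail $n_k\ge 1$ and applying $S_a^{-\nu}$ to the convergent sequence shows $S_a^{-\nu}(w)\in L(S_a^{-\nu}(z))$ likewise lies in the pre-closure set. Hence $S_a^{\pm\nu}$ stabilize $\bigcup_{z\in J_a^{(i_1,\hdots,i_\nu)}}L(z)$, and since $S_a^{\pm\nu}$ are homeomorphisms they also stabilize its closure $J_{i_1,\hdots,i_\nu}$.

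The main obstacle is purely bookkeeping: the subtraction step defining $J_a^{(i_1,\hdots,i_\nu)}$ for $\#\cal N_2^{i_1,\hdots,i_\nu}=m\ge 2$ must be tracked carefully so that the eventual containment in $U_{i_1,\hdots,i_\nu}$ is not spoiled by a coordinate drifting into some $U_0$ position strictly larger than the prescribed index. This is handled by the inductive hypothesis on $m$ already carried through Lemma \ref{U_0 time}, so no new ideas beyond those of Lemma \ref{lemma 3} are required.
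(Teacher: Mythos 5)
Your proposal is correct and follows essentially the same route as the paper, which simply states that the proof is similar to the $k=3$ case (Lemma \ref{lemma 3}, the analogue of this statement; the paper's reference to Lemma \ref{lemma 2} appears to be a slip). You have unpacked the three ingredients the paper leaves implicit: compactness of $U_{i_1,\hdots,i_\nu}$, eventual containment of the forward $S_a^\nu$-orbit of $z\in J_a^{(i_1,\hdots,i_\nu)}$ in the closed set $U_{i_1,\hdots,i_\nu}$ (which transfers to $L(z)$ by closedness), and invariance of each $L(z)$ under $S_a^{\pm\nu}$ via index-shifting, passed to the closure because $S_a^{\pm\nu}$ is a homeomorphism.
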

\begin{proof}
The proof is similar to Lemma \ref{lemma 2}.
\end{proof}
\no Let $V_c$ be an open set in $\mbb C$ relatively compact in the Fatou components of $p$ and containing the set $U_c$, $V_0$ a slightly bigger neighbourhood of $U_0$, which admits the hyperbolic metric $\rho.$ Also let $R'>R$ and $D_{R'}$ be the open disc of radius $R'.$
As before we consider the sets
$$V_{i_1,\hdots,i_{\nu}}=D_{R'}^{k-\nu} \times V_{i_1} \times \hdots \times V_{i_{\nu}}$$
where $i_j=0,c$ for $1 \le j \le \nu.$ Then $U_{i_1,\hdots,i_\nu}$ is a compact subset of $ V_{i_1,\hdots,i_\nu}.$

\medskip
\no Note that $J_{i_1,i_2,\hdots,i_{\nu}}$ may be empty. We will ignore such situations, i.e., let
\[ \cal{I}=\{(i_1,\hdots,i_\nu): J_{i_1,i_2,\hdots,i_{\nu}} \neq \emptyset \text{ where } i_j=0 \text{ or } c \text{ for every } 1 \le j \le \nu \}.\]

\no For $a=0$ and $z \in \mbb C^k$ observe that
\begin{align}
DS_0^{\nu(k-\nu)}(z)=\begin{pmatrix}
0_{k-\nu \times k-\nu} & [*]_{\nu \times k-\nu} \\
0_{\nu \times k-\nu} &A_{\nu \times \nu}
\end{pmatrix}
\end{align}
where $A$ is a diagonal matrix given by $A=Diag\big(p^{k-\nu}(z_{k-\nu+1}),\hdots,p^{k-\nu}(z_k)\big),$ i.e.,
\begin{align}
det(DS_0^{\nu(k-\nu}(z)-\la \text{Id})=\la^{k-\nu}\Pi_{i=1}^\nu(p^{k-\nu}(z_{k-\nu+i})-\la).
\end{align}

\medskip\no 
Let $\la_i^z(a)$ denote the eigenvalues of $DS_a^{\nu}(z)$, i.e.,
\[ \la_i^z(0)=0, \text{ for } 1 \le i \le k-\nu \text{ and } \la_i^z(0)=(p^{k-\nu})'(z_i) \text{ for } k-\nu+1 \le i \le k.\]
Also let $$E_0^z(a)=\text{ the eigenspace of the eigenvalues }\la_i^z(a) \text{ where } 1 \le i \le k-\nu$$
and for every $1 \le j \le \nu$ let,
$$E_j^z(a)=\text{ the eigenspace of the eigenvalues }\la_{k-\nu+j}^z(a).$$
\begin{lem}
 $E_j^z(0)$ is an invariant under $DS_0^{\nu(k-\nu)}(z)$ for every $0 \le j \le \nu. $ 
\end{lem}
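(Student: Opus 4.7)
The plan is to verify the invariance $DS_0^{\nu(k-\nu)}(z)\bigl(E_j^z(0)\bigr) \subset E_j^{S_0^{\nu(k-\nu)}(z)}(0)$ case by case on $j$, in direct analogy with the argument of Lemma \ref{invariance of subspaces} from the $k=3$, $\nu=2$ setting.

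For $j=0$, I would exploit the block-triangular form of $DS_0^{\nu(k-\nu)}(z)$ displayed above: since the first $k-\nu$ columns vanish and the diagonal block $A$ has generically nonzero entries, the kernel consists precisely of the vectors $(u,0)$ with $u \in \mathbb{C}^{k-\nu}$. This subspace is exactly $E_0^z(0)$, and it is sent to $0$ by $DS_0^{\nu(k-\nu)}(z)$, which trivially lies in $E_0^{S_0^{\nu(k-\nu)}(z)}(0)$.

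For $1 \le j \le \nu$, the diagonal form of $A$ forces the eigenvector associated to $\lambda_j^z(0) = (p^{k-\nu})'(z_{k-\nu+j})$ to have last $\nu$ coordinates a scalar multiple of the standard basis vector $e_j \in \mathbb{C}^\nu$, with first $k-\nu$ coordinates determined by the relation $B(z)\,e_j = \lambda_j^z(0)\cdot(\text{first } k-\nu \text{ coordinates})$. The key move is to invoke the commutative diagram of Remark \ref{Step 4}: the restriction of $S_0^\nu$ to the graph $\Gamma_\nu$ is conjugate via $\phi$ to $p_\nu$ on $\mathbb{C}^\nu$, so $S_0^{\nu(k-\nu)}$ descends to $p_\nu^{k-\nu}$, which manifestly preserves each coordinate axis (as $p_\nu$ is a diagonal polynomial map). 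Pulling this invariance back through $DS_0^{k-\nu}(z)$, the eigenspace $E_j^z(0)$ is identified with the pre-image of the $j$-th tangent direction of $\Gamma_\nu$, and the desired invariance is inherited from the axis-preserving property of $p_\nu^{k-\nu}$.

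The step I expect to be the main obstacle is the coordinate-level matching for $1 \le j \le \nu$: the off-diagonal block $B(z)$ becomes progressively more intricate across $k-\nu$ iterations of $S_0^\nu$, with entries built from products of $p'$ evaluated at various forward iterates, and one must verify that the entries at $z$ and at $S_0^{\nu(k-\nu)}(z)$ align under the eigenvector ansatz. The cleanest way to organize this is by induction on $k-\nu$, checking the statement for a single iterate of $S_0^\nu$ (where Remark \ref{Step 4} gives the conjugacy in one step) and then chaining the results, rather than expanding the $(k-\nu)$-fold composition directly.
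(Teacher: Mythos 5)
Your treatment of $j=0$ is fine and matches the paper's. For $1\le j\le \nu$, however, the proposal has a genuine gap in its central step. The paper's proof does not route through the graph conjugacy at all: it first works out the explicit coordinate expression of $S_0^{\nu(k-\nu)}$ (writing $k=m\nu+r$ and computing $\pi_i\circ S_0^{\nu(k-\nu)}(z)$ as an iterate of $p$ applied to one of $z_{k-\nu+1},\ldots,z_k$), then writes down the entries of a spanning vector of $E_i^z(0)$ coordinate by coordinate, and reads the invariance off from these formulas. That explicit computation is exactly the content of the lemma, and your proposal does not carry it out.

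The specific move "pulling this invariance back through $DS_0^{k-\nu}(z)$, the eigenspace $E_j^z(0)$ is identified with the pre-image of the $j$-th tangent direction of $\Gamma_\nu$" does not hold up. The differential $DS_0^{k-\nu}(z)$ has a $(k-\nu)$-dimensional kernel, so the pre-image of any line in $T_{S_0^{k-\nu}(z)}\Gamma_\nu$ is $(k-\nu+1)$-dimensional, not a line; you would need to explain which line inside that pre-image is $E_j^z(0)$, and that selection is governed by the off-diagonal block $B(z)$ of the \emph{full} composition $DS_0^{\nu(k-\nu)}(z)$, not by $DS_0^{k-\nu}(z)$ alone. The fact that $p_\nu$ preserves coordinate axes tells you something about the dynamics restricted to $\Gamma_\nu$, but the lemma concerns eigenspaces at arbitrary $z\in\mbb C^k$, where the eigenvector's first $k-\nu$ entries are $(\la_j^z(0))^{-1}B(z)e_j$ and depend on the point $z$. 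Whether this vector transports under $DS_0^{\nu(k-\nu)}(z)$ into the corresponding eigenspace at $S_0^{\nu(k-\nu)}(z)$ (whose entries are the same expressions evaluated at $p^{k-\nu}(z_{k-\nu+j})$ in place of $z_{k-\nu+j}$) is precisely the "coordinate-level matching" you flag as the obstacle and then defer; the conjugacy of Remark \ref{Step 4} does not supply that verification, and neither does the proposed induction without the explicit formulas the paper writes down. To complete the argument you would essentially have to reproduce the paper's coordinate computation, at which point the $\Gamma_\nu$-conjugacy is doing no work.
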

\begin{proof}
Note that $$E_0^z(0)={\sf Span}\{\underbrace{(0,\hdots,1,\hdots,0)}_{\text{i-th position}}: 1 \le i \le k-\nu\}$$
and $DS_0^{\nu(k-\nu)}(z)(E_0^z(0))=\{0\}$. Hence $E_0^z(0)$ is invariant under $S_0^{\nu(k-\nu)}.$

\medskip\no 
\textit{Case 1:} For $\nu=1$ or $k-\nu=1$ 
\begin{align*}
&S_0^{k-1}(z_1,\hdots,z_k)=(z_k,p(z_k),\hdots,p^{k-1}(z_k)) \\
\text{ or } &S_0^{k-1}(z_1,\hdots,z_k)=(z_k,p(z_2),\hdots,p(z_k))
\end{align*}
respectively.

\medskip\no 
\textit{Case 2:} For $2 \le \nu \le k-2$ and $k \ge 4$, note that $\nu(k-\nu) \ge k.$ Let $m \ge 1$ be the largest integer such that $k=m \nu+r$ where $0 \le r < \nu.$  Then for $1 \le i \le r$
\[ \pi_i \circ S_0^{\nu(k-\nu)}(z)=p^{k-\nu-m}(z_{k-r+i})\]
and $r+(j-1) \nu+1 \le i \le j \nu+r$ for $1\le j \le m$
\[ \pi_i \circ S_0^{\nu(k-\nu)}(z)=p^{k-\nu-m+j}(z_{k-\nu+[i-r]})\]
where $[i-r]=(i-r)\mod \nu.$

\medskip\no Thus each coordinate of $S_0^{\nu(k-\nu)}$ is a function of the last $\nu-$coordinates. Observe that, for $\nu-r+1 \le i \le \nu$, $E_i^z(0)={\sf span}\{v\}$ where
\[v=\begin{cases}
\pi_{i-\nu+r+j\nu}(v)=(p^{k-\nu-m+j})'(z_{k-\nu+i}) \text{ for } 0 \le j \le m\\ \pi_l(v)=0 \text{ otherwise }
\end{cases}\] 
and for $1 \le i \le \nu-r$, $E_i^z(0)={\sf span}\{v\}$ where
\[v=\begin{cases}
\pi_{j\nu+i+r}(v)=(p^{k-\nu-m+1+j})'(z_{k-\nu+i}) \text{ for } 0 \le j \le m\\ \pi_l(v)=0 \text{ otherwise }
\end{cases}.\] 
Hence, $DS_0^{\nu(k-\nu)}(E_i^z(0)) \subset E_i^{S_0^{\nu(k-\nu)}(z)}(0).$
\end{proof}
\begin{prop}
For every $x \in \cal{I}$, $V_x$ admits a Riemannian metric (equivalent to the hyperbolic metric) such that $J_x$ is hyperbolic set for $S_a^{\nu(k-\nu)}$ for sufficiently small choice of $|a|.$
\end{prop}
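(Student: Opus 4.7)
The plan is to transcribe the three-step strategy of Proposition~\ref{hyperbolic_prop} (the $k=3$ case) to general $k$ and $\nu$, with the only genuinely new input being the correct indexing of the stable/unstable factors according to the multi-index $x = (i_1,\ldots,i_\nu) \in \cal{I}$. For such $x$ I would split each tangent space as
\[ E_s^z(0) = E_0^z(0) \oplus \bigoplus_{j \in \cal{N}_1^x} E_j^z(0), \qquad E_u^z(0) = \bigoplus_{j \in \cal{N}_2^x} E_j^z(0), \]
the idea being that for $j \in \cal{N}_2^x$ the coordinate $z_{k-\nu+j}$ lies in $U_0$, so $(p^{k-\nu})'(z_{k-\nu+j})$ is expanding in the conformal metric $\rho$ from Section~4, while for $j \in \cal{N}_1^x$ it lies in $U_c$, so $(p^{k-\nu})'(z_{k-\nu+j})$ contracts in the hyperbolic metric $\mbb{H}$ on the corresponding basin.

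On the open set $V_x = D_{R'}^{k-\nu} \times V_{i_1} \times \cdots \times V_{i_\nu}$ I would define $\varrho_0$ by putting the Euclidean metric on the $E_0$-factor, $\rho$ on each $E_j$-factor with $j \in \cal{N}_2^x$, and $\mbb{H}$ on each $E_j$-factor with $j \in \cal{N}_1^x$. The preceding lemma, which shows $DS_0^{\nu(k-\nu)}(z)(E_j^z(0)) \subset E_j^{S_0^{\nu(k-\nu)}(z)}(0)$ for every $j$, guarantees that the pointwise expansion/contraction factors multiply correctly along orbits of $S_0^{\nu(k-\nu)}$; combined with the hyperbolicity of $p$ on $U_0 \cup U_c$, this yields $\lambda_0 > 1$ with
\[ \|DS_0^{\nu(k-\nu)}(z) v^u\|_{\varrho_0}^{S_0^{\nu(k-\nu)}(z)} > \lambda_0 \|v^u\|_{\varrho_0}^z, \qquad \|DS_0^{\nu(k-\nu)}(z) v^s\|_{\varrho_0}^{S_0^{\nu(k-\nu)}(z)} < \lambda_0^{-1} \|v^s\|_{\varrho_0}^z \]
for $v^{s/u} \in E_{s/u}^z(0)$ and $z \in V_x$.

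Next I would perturb in $a$: a continuously varying family of linear isomorphisms $\phi_{a,z}$ carrying $E_j^z(0)$ onto $E_j^z(a)$ induces a metric $\varrho_a = (\phi_{a,z}^{-1})^*\varrho_0$ on $V_x$ depending continuously on $a$. Introducing cone fields of small opening $\rho_1$ around the perturbed splitting, the $a=0$ estimates upgrade by continuity to forward-invariance and expansion on $C^u_z(\rho_1,a)$ and backward-invariance and contraction on $C^s_z(\rho_1,a)$ for $0 < |a| < A$, with $A$ chosen as the minimum of finitely many thresholds over $x \in \cal{I}$. Corollary 6.4.8 of \cite{KatokBook} then yields hyperbolicity of $S_a^{\nu(k-\nu)}$ on $J_x$; and since $\rho$ and $\mbb{H}$ are equivalent to Euclidean on relatively compact pieces of $V_{i_j}$, so is $\varrho_a$.

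The main obstacle will be the index-tracking in the range $k \ge 4$, $2 \le \nu \le k-2$: one must confirm that after applying $S_0^{\nu(k-\nu)}$ the $j$-th expanding (resp.\ contracting) eigenvalue at $S_0^{\nu(k-\nu)}(z)$ is still determined by a coordinate in $U_0$ (resp.\ $U_c$), so that the metric pieces match along the orbit. The eigenspace invariance is exactly the content of the preceding lemma, and the persistence of the coordinate's membership in $U_{i_j}$ follows from the $S_a^\nu$-invariance of $J_x$ established in Lemma~\ref{U_0 time}; combining the two consistently over all $\nu$ fiber directions is the only step that does not transcribe verbatim from the $k=3$ proof.
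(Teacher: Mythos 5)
Your proposal matches the paper's proof step for step: the same splitting $E_s = E_0 \oplus \bigoplus_{j:\, x_j=c} E_j$, $E_u = \bigoplus_{j:\, x_j=0} E_j$ (your $\cal{N}_1^x,\cal{N}_2^x$ are the paper's $\cal{J}_1,\cal{J}_2$), the same hybrid metric (Euclidean on the nilpotent block, $\mbb{H}$ on contracting factors in $V_c$, the expanding conformal metric $\rho$ on $V_0$), the same use of the eigenspace-invariance lemma, and the same perturbation/cone-field/Katok Corollary 6.4.8 finish modeled on Proposition~\ref{hyperbolic_prop}. You have also correctly flagged the one place where the argument does not transcribe verbatim from $k=3$ — that for $2\le\nu\le k-2$ one must verify the $j$-th eigenspace at $S_0^{\nu(k-\nu)}(z)$ is still governed by a coordinate lying in $U_{i_j}$ — and correctly attributed it to the combination of the eigenspace-invariance lemma with the $S_a^\nu$-invariance of $J_x$, which is exactly how the paper handles it. No gaps; this is essentially the paper's argument.
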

\begin{proof}
The proof is similar to Proposition \ref{hyperbolic_prop}. We will outline the main steps.

\medskip\no 
\textit{Step 1: }For $1 \le j \le \nu$, there exists an identification $E_j^z(0)$ with the tangent space $z_{k-\nu+j}$, i.e.,  $\phi_j^z(E_j^z(0))=T_{z_{k-\nu+j}} \mbb C$ such that if $v \in E_j^z(0)$
\[ \phi_j^{S_0^\nu(z)}(DS_0^{\nu(k-\nu)}(z)v)= (p^{k-\nu})'(z_{k-\nu+j})\phi_j(v) \in T_{p^{k-\nu}(z_{k-\nu+j})} \mbb C.\]
\textit{Step 2: }Now $z \in V_x$ where $x \in \cal{I}.$ Note that $x$ is a $\nu-$tuple of symbols, i.e., $x=(x_1,x_2,\hdots,x_\nu).$ Let $\cal{J}_1=\{i: x_i=c\}$ and $\cal{J}_2=\{i: x_i=0\}$ i.e., $z_{k-\nu+i} \in V_c$ if $i \in \cal{J}_1$ and $z_{k-\nu+i} \in V_0$ if $i \in \cal{J}_2.$ Recall that in $V_c$ the action of $p$ is strictly contracting with respect to the standard hyperbolic metric (i.e., $\mbb H$) and in $V_0$ the action of $p$ is strictly increasing with respect to the hyperbolic metric $\rho.$ 

\medskip\no Let $v \in T_z V_x=\oplus_{j=0}^{\nu}E_j^z(0)$ for $z \in V_x$, then $v=(v_0,\hdots,v_\nu)$ where $v_j \in E_j^z(0)$ for $0 \le j \le \nu.$ Define the metric on $V_x$ as 
\[ \|v\|_{\varrho_0}^z=\|v_0\|_{\mbb E}^z+ \sum_{i \in \cal{J}_1} \|\phi_j^z(v_j)\|_{\mbb H}^{z_{k-\nu+j}}+\sum_{i \in \cal{J}_2} \|\phi_j^z(v_j)\|_{\rho}^{z_{k-\nu+j}}.\]
\textit{Step 3: }Let $E_s^z(a)=E_0^z(a)\oplus_{j \in \cal{J}_1} E_j^z(a)$ and $E_u^z(a)=\oplus_{j \in \cal{J}_2}E_j^z(a).$ From invariance of $E_s^z(0)$ and $E_u^z(0)$ a it follows that there exists $\la_0>1$ such that:
\begin{align*}
& \|DS_0^{\nu(k-\nu)}(z)v^u\|_{\varrho_0}^{S_0^{\nu(k-\nu)}(z)}> \la_0 \|v^u\|_{\varrho_0}^z\\
& \|DS_0^{\nu(k-\nu)}(z)v^s\|_{\varrho_0}^{S_0^{\nu(k-\nu)}(z)}< \la_0^{-1} \|v^s\|_{\varrho_0}^z .
\end{align*}
\textit{Step 4: }Now as in the proof of Proposition \ref{hyperbolic_prop}, the choice of $a$ can be modified, i.e., there exists $A>0$ such that for $0<|a|<A$, the action of $S_a^{\nu(k-\nu)}$ is hyperbolic on $J_{x}$ for every $x \in \cal{I}.$
\end{proof}
\begin{rem}
Note that $\cal{J}_1$ may be empty, but $\cal{J}_2$ is always non--empty. 
\end{rem}
\begin{rem}
When $\nu=1$, $J_a \subset \ov{D_R}^{k-1}\times U_0 \subset D_{R'}^{k-1} \times V.$ From the invariance of $J_a$ and the Riemannian metric on $D_{R'}^{k-1} \times V$, it follows that the action of $S_a^{k-1}$ is hyperbolic on $J_a.$ Thus Theorem \ref{main theorem 1} is true. 
\end{rem}
\begin{rem}
Observe that
$\mbb C=U_0 \cup U_c \cup U_{\infty}$ where we can consider $U_0=U$, $U_c$ and $U_{\infty}$. If $W_{i_1,\hdots,i_\nu}$ denote the following sets for $i_j=0,c \text{ or } \infty$ and $1 \le j \le \nu$
\[ W_{i_1,\hdots,i_\nu}={D_R}^{k-\nu}\times U_{i_1}\times \hdots U_{i_\nu}.\] Now from the proof of Lemma \ref{structure of J}, if $|a|$ is sufficiently small,
$$J_a \subset \bigcup_{i_j=0,c} W_{i_1,\hdots,i_\nu} \setminus W_{c,c,\hdots,c}.$$
Since $U$ is a open subset of $\mbb C$ the set $$V=\bigcup_{i_j=0,c} W_{i_1,\hdots,i_\nu} \setminus W_{c,c,\hdots,c}$$ is an open subset of $\mbb C^k$ (by similar arguments as in Remark \ref{open neighbourhood}) and $J_a$ is maximal on $V.$
\end{rem}
\no Finally Theorem \ref{main theorem 2} follows by exactly same arguments as in the case $k=3.$
\bibliographystyle{amsplain}
\bibliography{ref}
\end{document}